\newcommand{\Ker}{\operatorname{Ker}}
\newcommand{\smallcup}{\operatorname{\smallsmile}}
\newcommand{\image}{\operatorname{Im}}
\newcommand{\N}{\mathbb{N}}
\newcommand{\R}{\mathbb{R}}
\newcommand{\Z}{\mathbb{Z}}
\newcommand{\C}{\mathbb{C}}
\newcommand{\Ad}{\operatorname{Ad}}
\newcommand{\ad}{\operatorname{ad}}
\newcommand{\tr}{\operatorname{tr}}
\newcommand{\SLn}[1][n]{\mathrm{SL}(#1)}
\newcommand{\sln}[1][n]{\mathfrak{sl}(#1)}
\newcommand{\GLn}[1][n]{\mathrm{GL}(#1)}
\newcommand{\gln}[1][n]{\mathfrak{gl}(#1)}
\newcommand{\Spec}{\operatorname{Spm}}
\def\co{\colon\thinspace}
\newtheorem{thm}{Theorem}[section]
\newtheorem{prop}[thm]{Proposition}
\newtheorem{lemma}[thm]{Lemma}
\newtheorem{cor}[thm]{Corollary}
\theoremstyle{definition}
       \newtheorem{remark}[thm]{Remark}
       \newtheorem{example}[thm]{Example}
\title{On the local structure of the $\mathrm{SL}(n,\mathbb{C})$ representation variety of knot groups}
\author[L. Ben Abdelghani]{Leila Ben Abdelghani}
\address{Laboratory of  Analysis, Probability and fractals, University of Monastir, 
Boulevard de l'environnement, 5019 Monastir, Tunisia}
\email{leila.benabdelghani@fsm.rnu.tn}
\author[M. Heusener]{Michael Heusener}
\address{
Universit\'e Clermont Auvergne, CNRS, Laboratoire de Math\'ematiques Blaise Pascal, F-63000 Clermont-Ferrand, France.
} 
\email{michael.heusener@uca.fr}
\begin{document}

\selectlanguage{english}

\begin{abstract}
We study the local structure of the representation variety of a knot group into the special linear group of degree $n$ over the complex numbers
at certain diagonal representations. In particular we determine the tangent cone of the representation variety at these diagonal representations, and show that the latter can be deformed into irreducible representations. 
Furthermore, we use Luna's slice theorem to analyze the local structure of the  character variety.
\end{abstract}

\subjclass[2020]{Primary 57K31; Secondary 57M05  20C99.}

\keywords{knot group ; variety of representations ; deformations of reducible representations}

\maketitle

\section{Introduction }
\label{Introduction}

The aim of this paper is to analyze the local structure of the variety of representations (resp. of characters) of a knot group into the special linear group $\SLn[n,\C]$, $n\geq2$, in the neighborhood of  certain diagonal representations (resp. characters). It is a well known fact that if $\Gamma$ is a finitely generated group and $G$ is a linear algebraic  group then the set $R(\Gamma, G)$ of representations $\Gamma\to G$ has the natural structure of an affine algebraic scheme (see \cite{Lubotzky-Magid1985}). A first approximation to the local structure of the representation variety at a given representation $\rho$ is given by its Zariski tangent space. Two  finer approximations are the  \emph{quadratic} and the \emph{tangent cones}
(see works of W. Goldman and J. Millson \cite{Goldman1984,GoldmanMillson1988}).

Given a finitely generated group $\Gamma$, we denote by $R_n(\Gamma) := R(\Gamma, \SLn[n,\C])$ 
the $\SLn[n,\C]$-representation variety of $\Gamma$.
The group $\SLn[n,\C]$ acts on $R_n(\Gamma)$ by conjugation, and the algebraic quotient of this action is the variety of characters which will be denoted by $X_n(\Gamma) : =R_n(\Gamma)\sslash\SLn[n,\C]$.

In this article, we focus on fundamental groups  of exteriors of knots in homology spheres, and
determine explicitly the quadratic and tangent cones of the representation variety at certain diagonal representations.
For the fundamental group $\Gamma$  of the exterior of a knot $K$ in a homology sphere, we obtain a canonical surjection $h\co\Gamma\to\Z\cong\Gamma/\Gamma'$. 
The representation space $R_n(\Z)$ is isomorphic to $\SLn[n,\C]$, and the surjection $h$ 
induces a \emph{closed immersion} $R_n(\Z)\hookrightarrow R_n(\Gamma)$. In fact, $R_n(\Z)\subset R_n(\Gamma)$ forms 
an $(n^2-1)$--dimensional algebraic component of $R_n(\Gamma)$.
A particular representation $\rho_D\in R_n(\Gamma)$ is given by a \emph{regular} diagonal matrix
 $D=D(\lambda_1,\ldots,\lambda_n)\in\SLn[n,\C]$ i.e. all the diagonal elements are pairwise different. 
More precisely, we let $\rho_D\co\Gamma\to\SLn[n,\C]$ denote the diagonal representation which factors through
the abelianization $\Gamma / \Gamma'$, and which is given by $\rho_D(\gamma) = D^{h(\gamma)}$ (see Section~\ref{sec:notations} for details).
If none of the quotients $\lambda_i/\lambda_j$ is a root of the Alexander polynomial $\Delta_K$, then
$\rho_D$ is a smooth point of $R_n(\Z)$, and there exists a neighborhood of $\rho_D$ which consists only of representations with abelian images. 
On the other hand, if all quotients  $\alpha_i=\lambda_i/\lambda_{i+1}$, $i=1,\ldots,n-1$, are simple roots of the Alexander polynomial 
$\Delta_K$ and if $\Delta_K(\lambda_i/\lambda_j)\neq0$ for all $|i-j|\geq 2$, then there exists a representation $\rho^{(n)}_D$ into the solvable group of upper triangular matrices. The representation  $\rho^{(n)}_D\in R_n(\Gamma)$  is reducible and non-semisimple.
Its existence was motivated by a result of G.~Burde \cite{Burde} and G.~de~Rham \cite{deRham}.
The orbit of the  representation $\rho^{(n)}_D$ under the action of conjugation of $\SLn[n,\C]$ is not closed, but it is of maximal dimension and its closure contains the diagonal representation $\rho_D$.

We obtain our main result which  generalizes \cite[Cor.~1.3]{Heusener-Porti-Suarez2001}, and \cite[Thm.~1.2]{Heusener-Porti2005}. 

\begin{thm}\label{thm:main1} Let $K$ be a knot in a three-dimensional integer homology sphere, and let 
$D=D(\lambda_1,\ldots,\lambda_n)\in\SLn[n,\C]$ be a regular diagonal matrix.
If all quotients  $\alpha_i=\lambda_i/\lambda_{i+1}$, $i=1,\ldots,n-1$, are simple roots of the Alexander polynomial 
$\Delta_K$ and if $\Delta_K(\lambda_i/\lambda_j)\neq0$ for all $|i-j|\geq 2$, then 
there exists a unique algebraic component $R^{(n)}_D$ of $R_n(\Gamma)$ which passes through $\rho_D$ and which contains irreducible representations.

Moreover, $\dim R^{(n)}_D = (n^2+n-2)$, 
$\rho_D$ is reduced, smooth point of $R^{(n)}_D$.
The components $R_n(\Z)$ and $R^{(n)}_D$ intersect transversally at the orbit  of $\rho_D$.
\end{thm}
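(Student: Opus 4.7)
\medskip
\noindent\textbf{Proof strategy.}
The plan is to bound the Zariski tangent space $T_{\rho_D}R_n(\Gamma)$ from above, exhibit two components through $\rho_D$ whose tangent spaces together account for that bound, and deduce smoothness, transversality, and uniqueness from a single dimension calculation.

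First I would identify $T_{\rho_D}R_n(\Gamma)$ with the cocycle space $Z^1(\Gamma,\Ad\rho_D)$. Since $\rho_D$ factors through $\Gamma/\Gamma'\cong\Z$, the adjoint action diagonalises $\sln[n,\C]$ into the Cartan $\mathfrak{h}$ (trivial action) and the weight lines $\C_{\alpha_{ij}}$ with $\alpha_{ij}=\lambda_i/\lambda_j$, so that $H^1(\Gamma,\Ad\rho_D)=H^1(\Gamma,\mathfrak{h})\oplus\bigoplus_{i\neq j}H^1(\Gamma,\C_{\alpha_{ij}})$. The standard link between $H^1(\Gamma,\C_\alpha)$ and the Alexander module shows that for $\alpha\neq1$ the dimension equals the multiplicity of $\alpha$ as a root of $\Delta_K$; the hypothesis then forces $\dim H^1(\Gamma,\C_{\alpha_{ij}})=1$ when $|i-j|=1$ and $0$ otherwise. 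The Cartan part contributes $n-1$, so $\dim H^1(\Gamma,\Ad\rho_D)=3(n-1)$, and since the stabiliser of $\rho_D$ is the diagonal torus we get $\dim B^1=n^2-n$ and hence the crucial upper bound $\dim Z^1(\Gamma,\Ad\rho_D)=n^2+2n-3$.

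Next I would construct $R^{(n)}_D$ and give it the right dimension. I would work at the reducible, non-semisimple representation $\rho^{(n)}_D$ recalled in the introduction, whose orbit has maximal dimension $n^2-1$ and whose closure contains $\rho_D$. A cohomological computation, using the filtration of $\sln[n,\C]$ by the upper-triangular strata and analysing each one-dimensional subquotient via the Alexander condition, should yield $\dim H^1(\Gamma,\Ad\rho^{(n)}_D)=n-1$, hence $\dim Z^1(\Gamma,\Ad\rho^{(n)}_D)=n^2+n-2$. Matching this to an explicit $(n-1)$-parameter analytic family of deformations of $\rho^{(n)}_D$ whose generic member is irreducible — built by integrating cocycles that break the upper-triangular form — would show that $\rho^{(n)}_D$ is a smooth point of a unique algebraic component $R^{(n)}_D$ of dimension $n^2+n-2$ containing irreducible representations. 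Since $\rho_D$ lies in the closure of the orbit of $\rho^{(n)}_D$, it also lies in $R^{(n)}_D$.

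Finally, I would combine these ingredients at $\rho_D$. The $\SLn[n,\C]$-orbit of $\rho_D$ has dimension $n^2-n$, and $T_{\rho_D}R_n(\Z)\cap T_{\rho_D}R^{(n)}_D$ contains the tangent space to this orbit. The arithmetic identity
\[
(n^2-1)+(n^2+n-2)-(n^2-n)=n^2+2n-3=\dim Z^1(\Gamma,\Ad\rho_D)
\]
then forces the sum $T_{\rho_D}R_n(\Z)+T_{\rho_D}R^{(n)}_D$ to fill all of $Z^1(\Gamma,\Ad\rho_D)$ with intersection exactly the orbit's tangent space. This simultaneously establishes transversality, shows that $\rho_D$ is a smooth point of $R^{(n)}_D$ (its tangent space there having the expected dimension $n^2+n-2$), and rules out any further component through $\rho_D$ carrying irreducible representations: any such component would have to contribute additional tangent directions, pushing $\dim Z^1(\Gamma,\Ad\rho_D)$ above the bound computed in the first step.

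The hard step will be the smoothness analysis at $\rho^{(n)}_D$: the infinitesimal deformations must be shown to integrate, which amounts to unobstructing the cup-product (or Massey-product) obstructions lying in $H^2(\Gamma,\Ad\rho^{(n)}_D)$. It is here that the simple-root hypothesis and the self-dual structure of the Alexander module (via Poincar\'e--Lefschetz duality on the knot exterior) must be exploited beyond first order, most plausibly by an explicit slicewise construction of deformations respecting the upper-triangular stratification.
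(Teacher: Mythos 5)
Your first two steps track the paper closely: the computation $\dim Z^1(\Gamma,\Ad\rho_D)=n^2+2n-3$ via the weight decomposition, and the filtration computation $\dim H^1(\Gamma,\Ad\rho^{(n)}_D)=n-1$ at the triangular representation, both appear in Sections~\ref{sec:diagonal} and~\ref{sec:main}. (One small remark: the paper gets smoothness of $\rho^{(n)}_D$ not by exhibiting an explicit family, but by citing Proposition~\ref{prop:smoothpoint} — a half-lives/half-dies consequence for $3$-manifolds with torus boundary — which says $\dim H^1=n-1$ already forces scheme smoothness in a unique component of dimension $n^2+n-2-\dim H^0$. Your alternative route would need precisely the second-order obstruction analysis you flag as ``the hard step,'' which the cited proposition avoids.)

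The serious gap is in your final paragraph. You assert that the identity
\[
(n^2-1)+(n^2+n-2)-(n^2-n)=\dim Z^1(\Gamma,\Ad\rho_D)
\]
``forces'' $T_{\rho_D}R_n(\Z)\cap T_{\rho_D}R^{(n)}_D$ to be exactly the orbit tangent space and $\rho_D$ to be a smooth point of $R^{(n)}_D$. This does not follow. A priori you only know $\dim T_{\rho_D}R^{(n)}_D\geq n^2+n-2$ and that the intersection \emph{contains} the orbit directions $B^1(\Gamma,\Ad\rho_D)$ of dimension $n^2-n$; the intersection could just as well contain some of the diagonal directions $H_i$, in which case the dimension count yields no contradiction (e.g. $T_{\rho_D}R_n(\Z)\subset T_{\rho_D}R^{(n)}_D=Z^1$ is arithmetically consistent). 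What the paper actually does to nail down $T_{\rho_D}R^{(n)}_D$ is compute the \emph{quadratic cone} at $\rho_D$ via cup products (Proposition~\ref{lem:quad_eq}): the obstruction ideal is $((2z_i-z_{i-1}-z_{i+1})x_i,\ (2z_i-z_{i-1}-z_{i+1})y_i)$, whose zero locus is the union of $2^{n-1}$ linear spaces $\widetilde V_\iota$. Because $TC_{\rho_D}R^{(n)}_D$ has pure dimension $n^2+n-2$ and is contained in this union, and because $\widetilde V_{\{1,\ldots,n-1\}}$ is the \emph{unique} component of that dimension, one gets $TC_{\rho_D}R^{(n)}_D=\widetilde V_{\{1,\ldots,n-1\}}$, a linear space — whence smoothness and, by direct inspection of the defining equations, $\widetilde V_\emptyset\cap\widetilde V_{\{1,\ldots,n-1\}}=B^1$, i.e.\ transversality. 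Without this cone computation the conclusion simply does not follow from the dimension of $Z^1$.

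Your uniqueness argument is also incorrect as stated. You claim that any further component through $\rho_D$ containing irreducible representations ``would have to contribute additional tangent directions, pushing $\dim Z^1$ above the bound.'' But the paper shows (Remark~\ref{rem:number_components}) that there are between $n$ and $2^{n-1}$ components through $\rho_D$, precisely because tangent spaces of distinct components overlap heavily inside $Z^1(\Gamma,\Ad\rho_D)$; extra components do not enlarge $Z^1$. Uniqueness of the component with irreducible representations requires identifying which cone components $\widetilde V_\iota$ can be tangent to deformations that break \emph{every} block reduction — only $\widetilde V_{\{1,\ldots,n-1\}}$ qualifies, all others force $x_j=y_j=0$ for some $j$ and hence preserve a flag to first (and, by the structure of the cone, to all) order. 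This again rests on the quadratic cone computation you omit.
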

The first part of Theorem~\ref{thm:main1} is proved in Section~\ref{sec:main}.
and the assertion about $\rho_D$ follows from Proposition~\ref{prop:R^n}.

\smallskip
For each representation $\rho\co\Gamma\to\SLn[n,\C]$ the Lie algebra $\sln[n,\C]$ turns into a $\Gamma$-module via $\Ad\rho$.
Due to a result of A. Weil \cite{Weil1964}, the Zariski tangent space at a representation $\rho\in R_n(\Gamma)$ 
is contained in the space of  crossed morphisms or $1$-cocycles $Z^1(\Gamma; \Ad\rho)$.
We call a cocycle $U\in Z^1(\Gamma; \Ad\rho)$  \emph{integrable} if there exists an analytic path
$\rho_t$ in $R_n(\Gamma)$ such that $\rho_0 = \rho$, and which is tangent to $U$ i.e. for all $\gamma\in\Gamma$ we have:
\[
U(\gamma) =\frac{d\rho_t(\gamma)}{dt}\Big|_{t=0}\rho(\gamma)^{-1}\,.
\]
By work of W. Goldman \cite{Goldman1984}, the first obstruction to integrability of the cocycle $U$ is given by the second cohomology class represented by its cup product  $[U\smallcup U]$. We prove that for the diagonal representation $\rho_D$ and under the hypothesis of Theorem~\ref{thm:main1} this obstruction is sufficient:
\begin{thm}\label{thm:main3}
Under the hypothesis of Theorem~\ref{thm:main1}, 
a tangent vector $U \in Z^1(\Gamma; \Ad\rho_D)$ is integrable if and only if its cup product  $[U\smallcup U]$ 
represents the trivial cohomology class in $H^2(\Gamma;\Ad\rho_D)$.
\end{thm}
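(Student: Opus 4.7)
The necessity is Goldman's second-order obstruction: differentiating an analytic path $\rho_t$ through $\rho_D$ twice at $t=0$ realises $[U\smallcup U]$ as a coboundary in $Z^2(\Gamma;\Ad\rho_D)$. The substance of the theorem therefore lies in the converse, which I would reformulate as the coincidence of the tangent cone of the underlying variety $R_n(\Gamma)$ at $\rho_D$ with the quadratic cone
\[
Q(\rho_D) \;=\; \bigl\{U\in Z^1(\Gamma;\Ad\rho_D)\,\bigm|\, [U\smallcup U]=0\in H^2(\Gamma;\Ad\rho_D)\bigr\}.
\]
The inclusion of the tangent cone in $Q(\rho_D)$ is automatic, and a cocycle is integrable precisely when it lies in the tangent cone, so it suffices to establish the reverse inclusion.

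By Theorem~\ref{thm:main1}, $\rho_D$ is a smooth point of the two components $R_n(\Z)$ and $R^{(n)}_D$ of $R_n(\Gamma)$, which meet transversally at the orbit of $\rho_D$. Combined with the analysis of reducible deformations that underlies Theorem~\ref{thm:main1} (which shows that no further algebraic component passes through $\rho_D$), the tangent cone equals the set-theoretic union $T_{\rho_D}R_n(\Z)\cup T_{\rho_D}R^{(n)}_D$ of two linear subspaces of $Z^1(\Gamma;\Ad\rho_D)$ of respective dimensions $n^2-1$ and $n^2+n-2$. Any tangent vector in this union is integrable via smoothness of the corresponding component, so Theorem~\ref{thm:main3} reduces to
\[
Q(\rho_D)\;\subseteq\; T_{\rho_D}R_n(\Z)\cup T_{\rho_D}R^{(n)}_D.
\]

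For this last inclusion I would use the weight decomposition $\sln[n,\C]=\mathfrak{h}\oplus\bigoplus_{i\neq j}\C E_{ij}$, with $\Gamma$ acting trivially on the Cartan $\mathfrak{h}$ and through the character $\gamma\mapsto(\lambda_i/\lambda_j)^{h(\gamma)}$ on each root line $\C E_{ij}$. The Alexander-polynomial hypotheses force $H^1$ and $H^2$ of the summands with $|i-j|\geq 2$ to vanish, while the $2(n-1)$ adjacent summands each contribute one dimension to both $H^1$ and $H^2$. Decomposing $U$ accordingly and using the identity $[E_{ij},E_{kl}]=\delta_{jk}E_{il}-\delta_{li}E_{kj}$, only the chained cup products $U^{(i,k)}\smallcup U^{(k,i+1)}$ contribute to the $(i,i+1)$-summand of $[U\smallcup U]$. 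The decisive step, where the main technical work lies, is to show that the resulting system of $2(n-1)$ quadratic equations factors through the decomposition above: the vanishing of $[U\smallcup U]$ should force either all cohomologically non-trivial adjacent components of $U$ to lie on a single triangular side (matching $T_{\rho_D}R^{(n)}_D$), or all of them to vanish cohomologically (matching $T_{\rho_D}R_n(\Z)$). This extends to arbitrary $n$ the computation of \cite{Heusener-Porti2005}, the added difficulty being that the $n-1$ resonances $\alpha_1,\ldots,\alpha_{n-1}$ of $\Delta_K$ now interact simultaneously, which I expect to handle by an inductive analysis on root length. Once this factorisation is in place, the theorem follows.
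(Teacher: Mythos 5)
Your reformulation of the theorem as the equality of the quadratic cone and the tangent cone is exactly the strategy of the paper, and your setup through the weight decomposition and identification of the quadratic equations is on the right track. However, there is a genuine gap at the decisive step, and it is not merely an unexecuted computation --- the expectation you articulate is incorrect for $n>2$.

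You claim that the tangent cone at $\rho_D$ is the union of the two linear spaces $T_{\rho_D}R_n(\Z)\cup T_{\rho_D}R^{(n)}_D$ and that "no further algebraic component passes through $\rho_D$." This is false. The quadratic cone is cut out by the $2(n-1)$ equations $(2z_i-z_{i-1}-z_{i+1})x_i=0$ and $(2z_i-z_{i-1}-z_{i+1})y_i=0$, and these do not factor into a disjunction of the two all-or-nothing possibilities you describe. For each index $i$, one can independently choose whether $x_i=y_i=0$ or $2z_i-z_{i-1}-z_{i+1}=0$, and the quadratic cone is therefore a union of $2^{n-1}$ affine subspaces $\widetilde V_\iota$, $\iota\subseteq\{1,\dots,n-1\}$, of dimensions ranging from $n^2-1$ to $n^2+n-2$. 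Your two spaces are only the extreme members $\widetilde V_\emptyset$ and $\widetilde V_{\{1,\dots,n-1\}}$, and the remark in Section~\ref{sec:diagonal} (and the explicit examples for $T(3,2)$ and $T(3,4)$ in Section~\ref{sec:Exp}) shows that the intermediate components correspond to genuine algebraic components of $R_n(\Gamma)$ of intermediate dimension containing non-abelian, reducible representations. So the "single triangular side / or all vanish" dichotomy you expect is simply not what the cup-product equations enforce, and for $n\geq 3$ a proof along the lines you propose cannot close.

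Moreover, even with the correct decomposition of the quadratic cone into $2^{n-1}$ linear pieces, identifying $Q(\rho_D)$ with the tangent cone still requires showing that every cocycle in every $\widetilde V_\iota$ is in fact integrable. The paper does this in Theorem~\ref{thm:tangent-cone} by block-diagonalising $D$ according to the partition of $\{1,\dots,n\}$ determined by $\iota^{\mathsf c}$, integrating each block using the $\SLn[n_s]$-version of the existence result for $\rho^{(n_s)}_{D_s}$, and then combining with a commuting diagonal family; the resulting arc is tangent to the given cocycle. Your proposal does not address integrability of these intermediate components at all, and since they exist for $n>2$, this is a second substantive gap even if the decomposition were corrected.
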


Moreover, under the hypothesis of Theorem~\ref{thm:main1}, we will show (see Section \ref{sec:diagonal}) that the \emph{tangent cone} 
$TC_{\rho_D} (R_n(\Gamma))$ at $\rho_D$ is the union of $2^{n-1}$ affine subspaces each of dimension $n^2-1+k$; $0\leq k\leq n-1$.
Thus,  for each $k$, $0\leq k \leq n-1$, there is at least one
irreducible algebraic component of $R_n(\Gamma)$ of dimension $n^2-1+k$ which contains $\rho_D$. It follows that for $n\geq2$
the representation $\rho_D$ is contained in at least $n$ 
components 
(see Subsection~\ref{sec:number_components}).

The component $R_n(\Z)$ is the unique component of dimension $(n^2-1)$, 
and it contains only abelian representations.
The component $R_D^{(n)}$ is the unique component of dimension 
$(n^2+n-2)$, and it is the unique component containing irreducible representations. 
All the other components contain only reducible representations and their dimension ranges between 
$n^2$ and $n^2+n-3$ (see Theorem~\ref{thm:components}).

\medskip

Let $t\co R_n(\Gamma)\to X_n(\Gamma)$ be the canonical projection and denote by $X_n(\Z):=t(R_n(\Z))$ and 
$X^{(n)}_D:=t(R^{(n)}_D)$  the quotients of the components $R_n(\Z)$ and $R^{(n)}_D$ respectively.
In fact $t\co R_n(\Z)\to X_n(\Z)\cong \C^{n-1}$ maps the matrix $A\in \SLn[n,\C]$ onto the coefficients of the characteristic polynomial of $A$ (see \cite[Example 1.2]{dolgachev2003lectures}).

As the set $R_n^{irr}(\Gamma)$ of irreducible representations in $R_n(\Gamma)$ is Zariski open and invariant by conjugation, 
its image $X_n^{irr}(\Gamma)=t(R_n^{irr}(\Gamma)) $ is Zariki open in $X_n(\Gamma)$. Their Zariski closures in $R_n(\Gamma)$ and $X_n(\Gamma)$ will be denoted by  $\overline{R_n^{irr}(\Gamma)}$ and $\overline{X_n^{irr}(\Gamma)}$ respectively.

\begin{thm}\label{thm:main2}
Under the hypothesis of Theorem~\ref{thm:main1} we obtain:
\begin{enumerate}
\item The character $\chi_D$ is  a smooth point of each of the two $(n-1)$-dimensional components $X_n(\Z)$ and $X_D^{(n)}$.

   \item The character $\chi_D=t(\rho_D)$ is a smooth point of $\overline{X_n^{irr}(\Gamma)}$.

\item The intersection of the two above components at $\chi_D$ is trivial:
\[
T_{\chi_D}^{Zar}X_n(\Z)\cap T_{\chi_D}^{Zar}X_D^{(n)}=\{0\}\,.
\]
\item Let $\ad\co\sln[n,\C]\to T_{\rho^{(n)}_D}^{Zar}R_n(\Gamma)$ and $dt\co T_{\rho^{(n)}_D}^{Zar}R_n(\Gamma)\to T_{\chi_D}^{Zar}X_n(\Gamma)$ be the tangent morphisms induced by the conjugation action of $\SLn[n,\C]$ on $\rho^{(n)}_D$ and by the canonical projection $t\co R_n(\Gamma)\to X_n(\Gamma)$.

Then we have the exact sequence:
\[
0\to\sln[n,\C]\stackrel{ad}{\longrightarrow} T_{\rho^{(n)}_D}^{Zar}R_n(\Gamma) \stackrel{dt}{\longrightarrow} T_{\chi_D}^{Zar}\overline{X_n^{irr}(\Gamma)}\to0\,,
\]
and $H^1(\Gamma;\Ad\rho_D^{(n)})\cong T_{\chi_D}^{Zar}\overline{X_n^{irr}(\Gamma)}$.

\end{enumerate}
\end{thm}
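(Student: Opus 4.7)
\emph{Approach.} The plan is to transport the tangent cone analysis of $R_n(\Gamma)$ at $\rho_D$ obtained in Section~\ref{sec:diagonal} across Luna's étale slice, so as to read off the local structure of $X_n(\Gamma)$ at $\chi_D$, and then to compare this with the Zariski tangent space of $R_n(\Gamma)$ at the non-closed orbit point $\rho_D^{(n)}$.

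\emph{Étale slice and weight decomposition.} The representation $\rho_D$ is polystable, so its $\SLn[n,\C]$-orbit is closed; its stabiliser is the centraliser of the regular diagonal $D$, namely the maximal torus $T\cong(\C^*)^{n-1}$. Luna's slice theorem provides a $T$-invariant affine slice $S\subset R_n(\Gamma)$ through $\rho_D$ together with étale morphisms $\SLn[n,\C]\times^{T}S\to R_n(\Gamma)$ at $\rho_D$ and $S\sslash T\to X_n(\Gamma)$ at $\chi_D$. By Weil's theorem $T_{\rho_D}^{Zar}S$ injects $T$-equivariantly into $H^1(\Gamma,\Ad\rho_D)$, and the root decomposition $\sln[n,\C]=\mathfrak{t}\oplus\bigoplus_{i\neq j}E_{ij}$ combined with the Alexander-module formula $\dim H^1(\Gamma,\C_\alpha)=\mathrm{mult}_\alpha(\Delta_K)$ yields, under our hypotheses,
\[
H^1(\Gamma,\Ad\rho_D)=\mathfrak{t}\;\oplus\;\bigoplus_{i=1}^{n-1}\bigl(H^1(\Gamma,E_{i,i+1})\oplus H^1(\Gamma,E_{i+1,i})\bigr),
\]
of total dimension $3(n-1)$, on which $T$ acts with $n-1$ trivial weights and non-trivial paired weights $\alpha_i$, $\alpha_i^{-1}$.

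\emph{Smoothness and transversality.} The tangent cone description of Section~\ref{sec:diagonal} presents $S$ locally as the union of $2^{n-1}$ affine subspaces cut out, for each $i$, by the cup-product obstructions of the form $\alpha_i(\tau)u_i=\alpha_i(\tau)v_i=0$. Taking $T$-invariants produces degree-two coordinates $w_i=u_iv_i$ on $S\sslash T$, and each component becomes the smooth $(n-1)$-dimensional affine subspace defined by $w_i=0$ for $i\in I$ and $\alpha_i(\tau)=0$ for $i\notin I$. The extremal choices $I=\{1,\ldots,n-1\}$ and $I=\emptyset$ recover $X_n(\Z)$, with tangent space $\vect(\tau_1,\ldots,\tau_{n-1})$, and $X_D^{(n)}=\overline{X_n^{irr}(\Gamma)}$, with tangent space $\vect(w_1,\ldots,w_{n-1})$; both are smooth of dimension $n-1$ and their tangent subspaces are carried by disjoint generators of the local maximal ideal of $X_n(\Gamma)$ at $\chi_D$, giving (1)--(3).

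\emph{The exact sequence at $\rho_D^{(n)}$.} Since the orbit closure of $\rho_D^{(n)}$ contains $\rho_D\in R_D^{(n)}$, the representation $\rho_D^{(n)}$ itself lies in $R_D^{(n)}=\overline{R_n^{irr}(\Gamma)}$. A direct check on the explicit upper-triangular form of $\rho_D^{(n)}$ shows that its centraliser in $\SLn[n,\C]$ reduces to the centre, so $ad\co\sln[n,\C]\hookrightarrow T_{\rho_D^{(n)}}^{Zar}R_n(\Gamma)$ is injective with image $B^1(\Gamma,\Ad\rho_D^{(n)})$. Combining Theorem~\ref{thm:main1} with the openness of the smooth locus inside the irreducible variety $R_D^{(n)}$, one sees that $\rho_D^{(n)}$ is a smooth point; hence $T_{\rho_D^{(n)}}^{Zar}R_n(\Gamma)=Z^1(\Gamma,\Ad\rho_D^{(n)})$ and the quotient by $B^1$ is $H^1(\Gamma,\Ad\rho_D^{(n)})$. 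Surjectivity of $dt$ onto the $(n-1)$-dimensional $T_{\chi_D}^{Zar}\overline{X_n^{irr}(\Gamma)}$ follows from (1), closing both the exact sequence and the claimed isomorphism. The main obstacle is verifying that the cup-product obstructions split cleanly under the $T$-action into the two smooth components named above (the $T$-invariant part of the bracket is governed by $[\mathfrak{t},E_{ij}]\subset E_{ij}$, producing $\alpha_i(\tau)u_i=0$, while $[E_{i,i+1},E_{i+1,i}]\subset\mathfrak{t}$ imposes no condition because $H^2(\Gamma,\mathfrak{t})=0$), together with the centraliser and smoothness analysis of $\rho_D^{(n)}$, for which the non-semisimplicity of the Burde--de~Rham construction is essential.
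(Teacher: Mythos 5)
Your overall plan for parts (2) and (3) --- Luna's étale slice at the closed orbit of $\rho_D$, the weight decomposition of $H^1(\Gamma,\Ad\rho_D)$ under the stabiliser torus $T$, and the observation that the trivial-weight part produces $V_\emptyset\sslash T\cong V_\emptyset$ while each paired-weight plane produces $\C^2\sslash\C^*\cong\C$ --- is exactly the route the paper takes (Theorem~\ref{thm:characterslice}, Proposition~\ref{prop:slice}, Proposition~\ref{prop:smoothchar}). The dimension count $n-1$ on both sides gives smoothness in (2), and the observation that the two tangent subspaces sit on disjoint coordinates of $T_0(H^1(\Gamma,\Ad\rho_D)\sslash T)$ gives (3). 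These parts are sound.

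There are, however, two genuine gaps.

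\emph{Part (1).} You write $X_D^{(n)}=\overline{X_n^{irr}(\Gamma)}$ and $R_D^{(n)}=\overline{R_n^{irr}(\Gamma)}$ as if these were identities. They are not; in general $\overline{X_n^{irr}(\Gamma)}$ may have several irreducible components. What is needed --- and what the paper actually proves --- is the local statement that $X_D^{(n)}$ is the \emph{unique} irreducible component of $\overline{X_n^{irr}(\Gamma)}$ through $\chi_D$. This requires a lifting argument: a component $X_0\subset\overline{X_n^{irr}(\Gamma)}$ through $\chi_D$ lifts to a unique conjugation-invariant component $R_0\subset R_n(\Gamma)$ containing irreducibles (Lemma~\ref{lem:decomposition}, a Boyer--Zhang type statement), and then $\rho_D\in R_0$ because a representation $\phi\in R_0$ over $\chi_D$ has the semisimplification $\rho_D$ in its orbit closure; this forces $R_0=R_D^{(n)}$ by the tangent-cone uniqueness of Theorem~\ref{thm:main1}. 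Without this step, smoothness of $X_D^{(n)}$ at $\chi_D$ does not yield smoothness of $\overline{X_n^{irr}(\Gamma)}$ there.

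\emph{Part (4).} You correctly obtain injectivity of $ad$ from $H^0(\Gamma;\Ad\rho_D^{(n)})=0$, identify $\image(ad)=B^1$, and identify $T_{\rho_D^{(n)}}^{Zar}R_n(\Gamma)=Z^1$ via Theorem~\ref{thm:upper-triang-deform}. But the hard point --- exactness in the middle, i.e.\ $\Ker dt\subseteq\image ad$, equivalently surjectivity of $dt$ onto the $(n-1)$-dimensional target --- does not follow from (1), and does not follow from dimension counting alone. The orbit of $\rho_D^{(n)}$ is \emph{not} closed, so Luna's slice theorem is not available at $\rho_D^{(n)}$; the differential $dt$ could a priori drop rank there (the standard picture $(x,y)\mapsto xy$ near $(0,0)$ illustrates the phenomenon). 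The paper closes this gap by an explicit computation: writing $U\in\Ker dt$ modulo coboundaries via Corollary~\ref{cor:H^1sln} (so the subdiagonal of $U$ is $\epsilon_i u_i^-$), expanding $\tr\bigl(U(\gamma)\rho_D^{(n)}(\gamma)\bigr)=0$, restricting to $\gamma\in\Gamma'$ to get $\sum_i\epsilon_i\,u_i^-(\gamma)u_i^+(\gamma)=0$, and evaluating on the basis $e_i^\pm$ of the Alexander module to force $\epsilon_i=0$, whence $U\in Z^1(\Gamma;C_0)$ is a coboundary by Remark~\ref{rem:trace-cohom}. Your write-up flags this as ``the main obstacle'' at the end, which is exactly where the missing content lies: that computation is the proof, not an afterthought.
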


\paragraph{Plan of the article.} In Section~\ref{sec:notations} we recall some results that we will need in our study, and we introduce the necessary notations. The experienced reader can skip this section and use it as a reference. In Section~\ref{sec:main} we construct the non-semisimple representation  $\rho^{(n)}_D$, and we show that it is a smooth point of $R_n(\Gamma)$ contained in a unique component $R^{(n)}_D$ of dimension $n^2+n-2$. Furthermore, we prove that the component $R^{(n)}_D$ contains irreducible representations. In Section~\ref{sec:diagonal} we determine the quadratic cone of the diagonal representation $\rho_D$ and  prove that it coincide with the tangent cone. 
Various examples are given in Section~\ref{sec:examples}. Finally, in Section~\ref{sec:character} we use our results to analyze the local structure of the character variety.

\section{Notations and background results}
\label{sec:notations}
 For the convenience of the reader, we state in this section the definitions and results that will be used in what follows. 
 More precisely, Subsection~\ref{Cohomological results} is devoted to collect results and definitions from group cohomology. In Subsection~\ref{sec:useful} we review some general facts about affine algebraic schemes, Zariski tangent space, quadratic and tangent cones. 
In Subsection~\ref{sec:curvetangents} we 
  recall the link between the quadratic cone and the curve tangents up to second order.
  In Subsection~\ref{sec:rep-scheme} we focus on the representation schemes and varieties. 
 Finally, in Subsection~\ref{Alexander_module} we recall some facts about the Alexander module, as well as certain cohomological results.
  
  \subsection{Group cohomology}
\label{Cohomological results}

The general reference for group cohomology is K.~Brown's book \cite[Chap.~III]{Brown1982}.

Let $A$ be a $\Gamma$-module. We denote by $C^*(\Gamma;A)$ the cochain complex and by $\delta\co C^n(\Gamma;A)\to C^{n+1}(\Gamma;A)$  the coboundary operator. 
The coboundaries (respectively cocycles, cohomology) of $\Gamma$ with coefficients in $A$ are denoted by $B^*(\Gamma;A)$ (respectively $Z^*(\Gamma;A),\ H^*(\Gamma;A)$.  
In what follows
$1$-cocycles and $1$-coboundaries  will be also called \emph{derivations} and \emph{principal derivations} respectively.

A short exact sequence:
\[ 0\to A_1 \stackrel{i}{\longrightarrow} A_2 \stackrel{p}{\longrightarrow} A_3 \to 0\]
of $\Gamma$-modules gives rise to a short exact sequence of cochain complexes:
\[ 0 \to C^*(\Gamma;A_1)\stackrel{i^*}{\longrightarrow} C^*(\Gamma;A_2) \stackrel{p^*}{\longrightarrow} C^*(\Gamma;A_3) \to 0\,.\]
We will make use of the corresponding long exact cohomology sequence
(see \cite[III. Proposition~6.1]{Brown1982}):
\[ 0\to H^0(\Gamma;A_1)\longrightarrow H^0(\Gamma;A_2) \longrightarrow H^0(\Gamma;A_3)\stackrel{\beta^0}{\longrightarrow}
H^1(\Gamma;A_1)\longrightarrow \cdots\]
Recall that the Bockstein homomorphism
$\beta^{n}\co H^n(\Gamma;A_3)\to H^{n+1}(\Gamma;A_1)$
is determined by the snake lemma:
if $z\in Z^n(\Gamma;A_3)$ is a cocycle and if $\tilde z \in (p^*)^{-1}(z)\subset C^n(\Gamma;A_2)$ is any lift of $z$ then  $\delta_2 (\tilde z) \in \image(i^*)$ where $\delta_2$ denotes the coboundary operator of $C^*(\Gamma;A_2)$. It follows that any cochain $z'\in C^{n+1}(\Gamma;A_1)$ such that
$i^*(z') = \delta_2 (\tilde z)$ is a cocycle and that its cohomology class does only depend on the cohomology class represented by $z$. The cocycle $z'$ represents
 the image of the cohomology class represented by $z$ under $\beta^{n}$.

 By abuse of notation and if no confusion can arise,  we will sometimes identify a cocycle with its cohomology class and will write
 $\beta^n(z)$ for a cocycle $z\in Z^n(\Gamma;A_3)$ even if the map $\beta^n$ is only well defined on cohomology classes. This will simplify the notations.
 
Let $A_1,\ A_2$ and $A_3$ be $\Gamma$-modules. The cup product of two cochains $u_1\in C^p(\Gamma;A_1)$ and $u_2\in C^q(\Gamma;A_2)$ is the cochain $u_1\smallcup u_2\in
C^{p+q}(\Gamma; A_1\otimes A_2)$ defined by:
\[
u_1\smallcup u_2(\gamma_1,\ldots,\gamma_{p+q}):=u_1(\gamma_1,\ldots,\gamma_p)\otimes\gamma_1\cdots\gamma_p\cdot u_2(\gamma_{p+1},\ldots,\gamma_{p+q})\,.
\]
Here $A_1\otimes A_2$ is a $\Gamma$-module via the diagonal action.

The cup-product is compatible with the boundary operator in the following sense:
\[
\delta (u_1\smallcup u_2) = \delta(u_1)\smallcup u_2 + (-1)^p u_1\smallcup\delta(u_2)\,.
\]

When combined with any $\Gamma$-invariant bilinear map $A_1\otimes A_2\to A_3$, this gives a cup product: 
\begin{equation}\label{eq:cup}
\smallcup^b\co C^1(\Gamma;A_1)\times C^1(\Gamma;A_2) \to C^2(\Gamma;A_3).
\end{equation}

In this paper, we will mainly make use of two bilinear forms: the multiplication of complex numbers and the Lie bracket when $A=\mathfrak g$ is a Lie algebra.
We will denote, respectively, $\cdot\smallcup\cdot$ and $[\cdot\smallcup\cdot]$ the corresponding cup products.

Let $b\co A_1\otimes A_2\to A_3$ be bilinear and let $z_i\in  Z^1(\Gamma; A_i)$, $i = 1, 2$, be cocycles.
We define $f\co \Gamma\to A_3$ by $f(\gamma) := b(z_1(\gamma) \otimes z_2(\gamma))$. A direct calculation gives:
$$\delta f(\gamma_1,\gamma_2) + b(z_1(\gamma_1) \otimes \gamma_1 \cdot z_2(\gamma_2)) + b(\gamma_1 \cdot z_1(\gamma_2) \otimes z_2(\gamma_1)) = 0.$$ This shows
that: 
\begin{equation}\label{eq:cupbil}
\delta f + z_1 \smallcup^b z_2 + z_2\smallcup^{b\circ \tau} z_1 = 0
\end{equation}
where $\tau\co A_1 \otimes A_2 \to A_2 \otimes A_1$ is the twist operator.

 \subsection{Affine algebraic schemes}
\label{sec:useful}

This section can be skipped by readers familiar with schemes and can be used as a reference.
For preliminaries to  affine algebraic schemes we follow the development in Appendix~A in \cite{Milne}. For a summary of basic results  the reader is invited to consult \cite[Section~2.1]{HP23}. %
General references for schemes include the books of D.~Eisenbud and J.~Harris \cite{EisenbudHarris},
E.~Kunz \cite{Kunz} and 
D.~Mumford \cite{Mumford-redBook}.

We are only interested in subschemes of a complex affine space
$\mathbb{A}^N$. The coordinate algebra of $\mathbb{A}^N$ will be denoted by $S$ and it is isomorphic to the polynomial ring $\C[x_1,\ldots,x_N]$. A subscheme $X$ of $\mathbb{A}^N$ is given by an ideal $J := J(X) \subset S$. Formally, an affine subscheme  is a pair 
$X= ( \Spec(S/J),S/J)$ where $|X|:= \Spec(S/J)$ is the maximal spectrum of the $\C$-algebra
$S/J$ which carries the Zariski topology. The quotient $S/J$ is called the \emph{coordinate algebra} or the \emph{algebra of regular functions} of $X$, and will be denoted by $\mathcal{O}(X)$.
A  point $p$ of $|X|$ corresponds to a maximal ideal $\mathfrak{m}_p$ in $\mathcal{O}(X)$, and the value of the regular function 
$f\in \mathcal{O}(X)$ at $p$ is the reduction $f(p) := f\bmod\mathfrak{m}_p$.
For any ideal $I\subset\mathcal{O}(X)$ we define 
$V(I) =\{ p\in|X|\mid I\subset \mathfrak m_p\}$, and we can define the subscheme $Y = (V(I), \mathcal{O}(X)/I)$ (see \cite[Section I.2.1]{EisenbudHarris}).

Let $X\subset \mathbb{A}^N$ be an affine subscheme and $p\in|X|$ a point.
We denote $ J := J(X)$ the corresponding ideal of $S$.
In the definition of the
tangent space $T_p(X)$ 
one takes all $f \in J$, expanded around $p$ and take their linear parts. It is sufficient to take only the linear part of a finite set of generators of $J$.
The tangent space $T_p(X)$ is the subscheme  corresponding to the ideal 
$J^{(1)}_p$ generated by 
these homogeneous linear forms, i.e.\ $T_p(X)=V(J^{(1)}_p)$. In fact it is an affine subspace of 
$\mathbb{A}^N$. 

The difference with the definition of the tangent cone
$TC_p(X)$ to $X$ at $p$ is this: we take all $f \in J$, expand around $p$, and look not at their linear terms but at their \emph{leading} terms, all these leading terms generate an ideal $J^{(\infty)}_p$. Notice that the leading terms of a set of generators of $J$ is in general not sufficient.

Similar, for the quadratic cone $TQ_p(X)$ we take all $f \in J$, expand around $p$, and look at their leading term if its degree is at most two, all these leading terms generate an ideal $J^{(2)}_p$. We obtain: 
\[
 J^{(1)}_p \subset J^{(2)}_p \subset J^{(\infty)}_p\quad\text{ and }\quad
 T_p(X)\supset TQ_p(X) \supset TC_p(X) \,.
\]
Notice that the tangent space is the smallest affine subspace of $\mathbb{A}^N$ which contains the tangent cone $TC_p(X)$ (see \cite[III, \S4]{Mumford-redBook} for more details).

Moreover, for each point $p\in |X|$ we have the following inequalities:
\begin{equation}
\label{eqn:ienequalities}
\dim_p X=\dim TC_p(X)\leq \dim TQ_p(X) \leq \dim T_p (X)\,.
\end{equation}
The point $p$ is called \emph{nonsingular} or \emph{smooth} if the tangent cone equals the tangent space $TC_p(X)=T_p(X)$. Notice that  $\dim TC_p(X)= \dim T_p(X)$ implies that $p$ is nonsingular.

An algebra is called \emph{reduced} if it has no non-zero nilpotent elements.
A scheme $X$ is called \emph{reduced} if its coordinate algebra $\mathcal{O}(X)$ is reduced, 
and a point $p$ of $X$ is called \emph{reduced} if its local ring $\mathcal{O}(X)_{\mathfrak m_p}$ is reduced.

In what follows we will give a sufficient condition on a  point $p\in X$ to be reduced. More precisely, we will make  use of the fact that the coordinate algebra of $TC_p(X)$ is isomorphic to the associated graded algebra of the local ring of $p\in X$ (see \cite[III.\S3]{Mumford-redBook}, \cite[Chap.~6, Prop.~1.2]{Kunz} and \cite{HP23}).

\begin{lemma}\label{lem:scheme2}
Let  $X=\Spec (k[X_1,\ldots,X_N]/I)\subset \mathbb{A}^N$ be an affine scheme, and
let $p\in X$ be a point. If the tangent cone $TC_p(X)$ is reduced then the local ring
$\mathcal{O}(X)_{\mathfrak m_p}$ is also reduced and thus $p$ is reduced.
\end{lemma}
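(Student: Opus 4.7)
The plan is to exploit the isomorphism, recalled in the paper just above the lemma, between the coordinate algebra of the tangent cone and the associated graded ring of the local ring at $p$ with respect to its maximal ideal. Writing $R:=\mathcal{O}_p(X)$ and $\mathfrak{m}:=\mathfrak{m}_p$, this identification reads
\[
\mathcal{O}(TC_p(X))\;\cong\;\mathrm{gr}_{\mathfrak{m}}(R)\;=\;\bigoplus_{n\geq 0}\mathfrak{m}^{n}/\mathfrak{m}^{n+1}\,,
\]
so the hypothesis that $TC_p(X)$ is reduced translates into reducedness of $\mathrm{gr}_{\mathfrak{m}}(R)$. The lemma is therefore reduced to the purely commutative-algebraic statement: for a Noetherian local ring $R$ with maximal ideal $\mathfrak{m}$, if $\mathrm{gr}_{\mathfrak{m}}(R)$ is reduced, then so is $R$.

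First I would note that $R$ is Noetherian, being a localization of the finitely generated $\C$-algebra $\C[X_1,\ldots,X_N]/I$, and invoke Krull's intersection theorem to get $\bigcap_{n\geq 0}\mathfrak{m}^{n}=(0)$. Then, arguing by contraposition, I would suppose that $f\in R$ is a nonzero element with $f^{k}=0$ for some $k\geq 2$. By Krull's theorem there exists a largest integer $n\geq 0$ with $f\in\mathfrak{m}^{n}$, and the initial form $\bar f:=f+\mathfrak{m}^{n+1}\in\mathfrak{m}^{n}/\mathfrak{m}^{n+1}$ is then nonzero in $\mathrm{gr}_{\mathfrak{m}}(R)$. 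On the other hand, $f^{k}=0$ lies in every power of $\mathfrak{m}$, in particular in $\mathfrak{m}^{nk+1}$, which means that $\bar f^{k}=0$ in $\mathfrak{m}^{nk}/\mathfrak{m}^{nk+1}$. Thus $\bar f$ is a nonzero nilpotent element of $\mathrm{gr}_{\mathfrak{m}}(R)\cong \mathcal{O}(TC_p(X))$, contradicting the hypothesis. Hence $R$ contains no nonzero nilpotent, i.e.\ $p$ is a reduced point of $X$.

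I do not anticipate a serious obstacle: the argument is a textbook application of Krull's intersection theorem together with the cited identification of $\mathrm{gr}_{\mathfrak{m}}(R)$ with $\mathcal{O}(TC_p(X))$. The only points that require some care are to verify Noetherianity of the localization $R$ (immediate since $X\subset \mathbb{A}^{N}$ is cut out by an ideal in a polynomial ring) and to check that passage to the initial form is compatible with multiplication in the sense used above, which is the standard verification behind the definition of the associated graded ring.
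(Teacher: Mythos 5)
Your proof is correct, and it supplies exactly the argument that the paper leaves to its references. The paper states the lemma without an explicit proof, referring the reader to Mumford, Kunz, and \cite{HP23} for the identification of $\mathcal{O}(TC_p(X))$ with the associated graded ring $\mathrm{gr}_{\mathfrak{m}_p}(\mathcal{O}_p(X))$; your write-up invokes this identification and then carries out the standard commutative-algebra step---reduced associated graded implies reduced local ring---via Krull's intersection theorem, which is precisely the content of the cited results.
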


If $X$ is a scheme, there exists a unique reduced subscheme $X_\mathit{red}$ of $X$ with reduced coordinate algebra $\mathcal{O}(X_\mathit{red}) = \mathcal{O}(X)/N$ (here
$N$ is the nilradical of $\mathcal{O}(X)$, see \cite{EisenbudHarris,HP23}). 
The topological spaces $|X|$ and $|X_\mathit{red}|$ are homeomorphic.
Notice that if $p$ is a reduced point then the tangent spaces $T_p (X_\mathit{red})$ and $T_p (X)$ coincide.

\subsection{Curve tangents and quadratic cone}
\label{sec:curvetangents}

We recall in this section the link between the quadratic cone and the curve tangents up to second order.
In what follows we consider the Artin algebras $A_n := \C[t]/(t^{n+1})$, $n\in\N$. The algebra $A_0$ is identified with $\C$, and $A_1$ is commonly identified with dual numbers $\C[\epsilon]$, $\epsilon^2=0$. 
We let $\pi_n\co A_{n+1}\to A_n$ denote the canonical projection. 

As in Subection~\ref{sec:useful}, we let $X\subset \mathbb{A}^n$ denote an affine algebraic subscheme with coordinate ring $\mathcal{O}(X)$. For every point $p\in X$ we let $\mathfrak{m}_p\subset\mathcal{O}(X)$ denote the corresponding maximal ideal and $\alpha_p\co \mathcal{O}(X)\to \C$ the corresponding algebra morphism with $\mathrm{Ker}(\alpha_p)=\mathfrak{m}_p$. 
In other words, $\alpha_p(f) = f(p)$. A tangent vector $v_p\in T_pX$ corresponds to a morphism $\alpha_{v_p}\co\mathcal{O}(X) \to A_1$, given by $\alpha_{v_p}(f) = f(p) + \epsilon\, df(p)(v_p)$, such that $\pi_0\circ\alpha_{v_p} =\alpha_p$. We say that $v_p\in T_pX$ is a \emph{curve tangent of order two at $p$} if there exists a morphism $\alpha_2\co\mathcal{O}(X)\to A_2$ such that $\pi_1\circ\alpha_2 = \alpha_v$.

\begin{lemma} \label{lem:2nd_curve_tangent} Let $p\in X$ be a point. The space of curve tangents of order two at $p$ has the structure of an affine algebraic subscheme of $T_pX$. More precisely, it coincides with the quadratic cone of $X$ at $p$.
\end{lemma}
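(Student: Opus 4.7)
The plan is to work in explicit coordinates and describe morphisms $\alpha_2\co \mathcal{O}(X) \to A_2 = \C[t]/(t^3)$ lifting $\alpha_v$ directly. After translation I may assume $p = 0 \in \mathbb{A}^N$, so $\mathfrak{m}_p$ corresponds to $(x_1, \ldots, x_N)/J$ and every element of $J$ decomposes into homogeneous parts $f = f^{(1)} + f^{(2)} + \cdots$. A lift $\alpha_2$ is determined by assignments $\alpha_2(x_i) = v_i t + w_i t^2$, subject to the vanishing in $A_2$ of $\alpha_2(f_j)$ for a chosen finite generating family $f_1, \ldots, f_m$ of $J$. A direct expansion, using that monomials of total degree $\ge 3$ in $vt + wt^2$ contribute only in degree $\ge 3$, yields
\[
f_j(vt + wt^2) \equiv f_j^{(1)}(v)\, t + \bigl(f_j^{(1)}(w) + f_j^{(2)}(v)\bigr)\, t^2 \pmod{t^3}.
\]
So $\alpha_2$ exists if and only if (C1) $f_j^{(1)}(v) = 0$ for every $j$, i.e.\ $v \in T_p X$, and (C2) the inhomogeneous linear system $f_j^{(1)}(w) = -f_j^{(2)}(v)$ in the unknown $w$ is solvable.

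The space of curve tangents of order two at $p$ is then the image under the projection $(v,w) \mapsto v$ of the closed subscheme of $T_pX \times \mathbb{A}^N$ cut out by (C2). By elementary linear algebra, (C2) admits a solution $w$ precisely when every scalar tuple $(c_1, \ldots, c_m) \in \C^m$ satisfying $\sum_j c_j f_j^{(1)} \equiv 0$ also satisfies $\sum_j c_j f_j^{(2)}(v) = 0$. I will then compare this compatibility condition with the defining equations of $TQ_p(X)$: its ideal $J^{(2)}_p$ is generated by the linear forms $g^{(1)}$ for $g \in J$, which cut out $T_p X$, together with the quadratic forms $g^{(2)}$ for $g \in J$ with $g^{(1)} = 0$.

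For the matching step I take an arbitrary $g = \sum_j h_j f_j \in J$ and compute $g^{(1)} = \sum_j h_j(0)\, f_j^{(1)}$; using $v \in T_p X$, the cross terms $h_j^{(1)}(v)\, f_j^{(1)}(v)$ vanish, leaving $g^{(2)}(v) = \sum_j h_j(0)\, f_j^{(2)}(v)$. Setting $c_j := h_j(0)$, the vanishing of $g^{(2)}(v)$ for all $g \in J$ with $g^{(1)} = 0$ is equivalent to the compatibility relation for (C2). This gives both inclusions and identifies, set-theoretically, the space of curve tangents of order two at $p$ with the quadratic cone $TQ_p(X)$, endowing it with the latter's natural affine subscheme structure.

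The main obstacle will be this final translation: passing from the extrinsic solvability condition ``there exists a second-order lift $w$'' to the intrinsic vanishing condition defining $TQ_p(X)$. What makes it work is that the tangency condition $v \in T_p X$ forces all cross terms $h_j^{(1)}(v)\, f_j^{(1)}(v)$ to vanish, so only the constant values $h_j(0)$ of the coefficients matter; this reduces the a priori much larger family of conditions coming from arbitrary $g \in J$ to the finitely many $\C$-linear relations among the linear leading parts $f_j^{(1)}$ of the chosen generators.
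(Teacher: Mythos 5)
Your computation is sound up to the set-theoretic level and in spirit matches the paper's method of working in explicit coordinates; the main differences are (i) you keep arbitrary generators $f_1,\ldots,f_m$ and resolve the lifting problem via a solvability (Fredholm-type) criterion for the linear system (C2), whereas the paper first normalizes coordinates and generators so that $f_i = x_i + f_i^{(2)} \bmod \mathfrak{m}_p^3$ for $i\le r$ and $f_i = f_i^{(2)} \bmod \mathfrak{m}_p^3$ for $i>r$ — in that gauge the linear system decouples, the first $r$ equations uniquely determine the second-order coefficients $b_i$, and the remaining equations $f_i^{(2)}(v)=0$ ($i>r$) directly generate the curve-tangent ideal without any rank argument; and (ii) the paper then proves an honest equality of ideals $J^{(2)}=(x_1,\ldots,x_r,f_{r+1}^{(2)},\ldots,f_m^{(2)})$, which is what identifies the two \emph{schemes}.

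That second point is where your proposal has a real gap. You end by saying the two loci are identified ``set-theoretically'' and that you then ``endow'' the curve-tangent set with the quadratic cone's scheme structure. But the whole point of the lemma is to produce the scheme structure, not to import it by fiat — and the scheme structure is what actually gets used later (in Proposition~\ref{prop:quad_cone} one deduces that $TQ_{\rho_D}R_n(\Gamma)$ is \emph{reduced} from the fact that the ideal $\widetilde I^{(2)}$ is radical, which only makes sense if the ideal of the quadratic cone is literally $\widetilde I^{(2)}$ and not merely a primary ideal with the same zero set). The good news is that your argument is one step away from closing this: the compatibility conditions in (C2) are finitely many quadratic forms $\sum_j c_j^{(l)} f_j^{(2)}$ indexed by a basis $(c^{(l)})$ of relations among the linear parts $f_j^{(1)}$, and your final paragraph essentially shows that these, together with the linear forms $f_j^{(1)}$, generate exactly $J^{(2)}_p$ — you just need to phrase the comparison as an identity of polynomials modulo the tangent-space ideal (i.e.\ $g^{(2)} = \sum_j h_j^{(0)} f_j^{(2)} + \sum_j h_j^{(1)} f_j^{(1)}$ as an equation of forms, not only an evaluation at points $v\in T_pX$), rather than as a pointwise evaluation. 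With that change, your route gives the scheme-theoretic identification without the normalization step.
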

\begin{proof}
The coordinate ring of $X\subset\mathbb{A}^n$ has a presentation 
$\C[x_1,\ldots,x_n]/(f_1,\ldots,f_m)$.
After an affine change of coordinates, and a change of generators of the ideal $J=(f_1,\ldots,f_m)$, we can assume that $\mathfrak{m}_p=(x_1,\ldots,x_n)\supseteq J$, 
\[
f_i = x_i + f_i^{(2)} \bmod \mathfrak{m}_p^3\quad\text{ for $1\leq i \leq r$, and }
f_i = f_i^{(2)} \bmod \mathfrak{m}_p^3\ \quad\text{ for $r<i\leq m$.}
\]
Here $r=n-\dim T_pX$ is the rank of the linearization of the $f_i$'s, and $f_i^{(2)}$ is the second homogenous component of $f_i = f_i^{(1)} + f_i^{(2)} \bmod \mathfrak{m}_p^3$.

The tangent space $T_pX$ is then given by the equations $x_i=0$, $1\leq i\leq r$. 
Hence a substitution $x_i =a_i t$, $a_i\in \C$, defines an algebra morphism
\[
\alpha_v\co\C[x_1,\ldots,x_n]/(f_1,\ldots,f_m)\to A_1
\]
if and only if $a_i=0$, $1\leq i\leq r$. 
The corresponding tangent vector is $v=(a_{r+1},\ldots,a_n)\in T_pX$. Now,
$\alpha_v$ extends to a morphism $\alpha_2\co \mathcal{O}(X)\to A_2$ which is defined by the substitution 
$x_i =a_i t + b_i t^2$, $b_i\in \C$, if and only if
\[
b_i + f_i^{(2)}(0,\ldots,0,a_{r+1},\ldots,a_n) =0 \quad \text{ for $1\leq i \leq r$,} 
\]
and
\[
f_i^{(2)}(0,\ldots,0,a_{r+1},\ldots,a_n) =0 \quad \text{ for $r+1\leq i \leq m$.}
\]
Hence the coordinate algebra of second order curve tangents at $p$ is isomorphic to 
\begin{equation}\label{eq:curve_alg} 
\C[t_1,\ldots,t_{n-r}]/(g_1,\ldots,g_{m-r})
\end{equation}
where 
$g_j(t_1,\ldots,t_{n-r}) = f_{r+j}^{(2)}(0,\ldots,0,t_1,\ldots,t_{n-r})$, $1\leq j \leq m-r$,
are homogenous polynomials of degree two.

The ideal of the quadratic cone  $J^{(2)}$ is the ideal generated by the leading terms of polynomials of $J$ of degree lower or equal to two.
We will show that $J^{(2)}$ coincides with the ideal $K$ which is generated by the leading terms of the generators of $J$ i.e.\ $J^{(2)} = K = (x_1,\ldots,x_r,f_{r+1}^{(2)},\ldots,f_m^{(2)})$.
From this it follows directly that the algebra of the quadratic cone is isomorphic to the algebra defined in equation~\eqref{eq:curve_alg} i.e.
\[
\C[x_1,\ldots,x_n]/ J^{(2)} \cong \C[t_1,\ldots,t_{n-r}]/(g_1,\ldots,g_{m-r})\,.
\]

It is clear that $K\subset J^{(2)}$. In order to prove that $K\supset J^{(2)}$ we consider a polynomial $g\in J$,
\(
g = \sum_{i=1}^m a_i f_i 
\) with polynomials $a_i\in \C[x_1,\ldots,x_n]$. Now a direct calculation shows that
\[
g^{(1)} = \sum_{i=1}^r a_i^{(0)} f_{i}^{(1)}
\quad \text{ and }\quad
g^{(2)} = \sum_{i=1}^m a_i^{(0)} f_{i}^{(2)} + \sum_{i=1}^r a_i^{(1)} x_i\,.
\]
It follows from the first equation that the linear form $g^{(1)}$ is a linear combination of $x_1,\ldots,x_r$.
Observe that the leading term of $g^{(1)}$ is zero if and only if $a_i^{(0)}=0$, $1\leq i\leq r$.
In this case the leading term of $g$ is $g^{(2)} =  \sum_{j=r+1}^m a_j^{(0)} f_{j}^{(2)} + \sum_{i=1}^r a_i^{(1)} x_i$ which is contained in the ideal~$K$.
\end{proof}

\subsection{Representation scheme and variety}
\label{sec:rep-scheme}

The principal reference for this section is Lubotzky's and Magid's book \cite{Lubotzky-Magid1985}.
To shorten notation we will simply write $\SLn$ and $\GLn$ instead of $\SLn[n,\C]$ and $\GLn[n,\C]$ respectively.
The same notation applies for the Lie algebras
$\sln=\sln[n,\C]$ and $\gln=\gln[n,\C]$.

Let $\Gamma$ be a finitely generated group. The set of all homomorphisms of $\Gamma$ into $\SLn$ has the structure of an \emph{affine scheme}. 
More precisely,  the
$\SLn$ \emph{scheme of representations}  $ R_n(\Gamma):= R(\Gamma, \SLn)$ is defined as the \emph{spectrum} 
of the universal algebra $A_n(\Gamma)$:
\[
R_n(\Gamma) := \Spec(A_n(\Gamma))
\]
(see \cite{Lubotzky-Magid1985} or \cite{HP23} for more details).
The scheme $R_n(\Gamma) $ is equipped with the  \emph{Zariski topology}.
A single representation $\rho\colon\Gamma\to\SLn$ corresponds to a $\C$-algebra morphism
$A_n(\Gamma)\to \C$ i.e.\ to a maximal ideal $\mathfrak m_\rho \subset A_n(\Gamma)$, and each maximal ideal $\mathfrak m\subset A_n(\Gamma)$ determines a representation
$\rho_\mathfrak{m}\colon\Gamma\to\SLn$ (this follows from Zariski's Lemma).
Hence a maximal ideal in $A_n(\Gamma)$ corresponds exactly to a representation
$\Gamma\to\SLn$, and we will use this identification in what follows. 

Notice that the universal algebra $A_n(\Gamma)$ might be not reduced. Therefore,
one has to consider the underlying affine scheme  with a possible non-reduced coordinate ring.

For more details and a  presentation of these notions see also \cite[\S2]{Heusener2016}.

Associated to the representation scheme is the representation variety
$R_n(\Gamma)_\mathit{red}$ with reduced coordinate ring. The representation variety
$R_n(\Gamma)_\mathit{red}$ is a closed subscheme of $R_n(\Gamma)$.
The underlying topological spaces of $R_n(\Gamma)$ and $R_n(\Gamma)_\mathit{red}$ are homeomorphic and hence $\dim R_n(\Gamma) = \dim R_n(\Gamma)_\mathit{red}$.

Let $\rho\co\Gamma\to \SLn$ be a representation.
The Lie algebra $\sln$ turns into a $\Gamma$-module via
$\Ad\rho$. 
 
A \emph{$1$-cocycle} or \emph{derivation} $d\in Z^1(\Gamma;\Ad\rho)$ is a map
$d\co\Gamma\to \sln$ satisfying
\[d(\gamma_1\gamma_2)=d(\gamma_1)+\Ad_{\rho(\gamma_1)}(d(\gamma_2))\quad\ \forall\ \gamma_1,\ \gamma_2\in\Gamma\,.\]

It was observed by A. Weil \cite{Weil1964} that the tangent space
$T_\rho R_n(\Gamma)_\mathit{red}$ of the representation variety at $\rho$ embeds into $Z^1(\Gamma;\Ad\rho)$. More precisely, there is
an isomorphism between the tangent space of the representation scheme
$T_\rho R_n(\Gamma)$ at $\rho$ and the space of 1-cocycles $Z^1(\Gamma;\Ad\rho)$ (see \cite[Proposition~2.2]{Lubotzky-Magid1985}):
\[
T_\rho R_n(\Gamma)\cong Z^1(\Gamma;\Ad\rho)\,.
\]

 Informally speaking, given a smooth curve $\rho_t$ of representations through $\rho_0=\rho$ one gets a $1$-cocycle $d\co\Gamma\to \sln$ by defining:
\[ d(\gamma) := \left.\frac{d \, \rho_{t}(\gamma)}
{d\,t}\right|_{t=0} \rho(\gamma)^{-1},
\quad\forall\gamma\in\Gamma\,.\]

We obtain the following inequalities:
\[
\dim_\rho R_n(\Gamma)=\dim_\rho R_n(\Gamma)_\mathit{red} \leq  \dim T_\rho R_n(\Gamma)_\mathit{red}\leq
\dim T_\rho R_n(\Gamma) = \dim Z^1(\Gamma;\Ad\rho)\;.
\]

The two inequalities are independent and if
$\dim_\rho R_n(\Gamma) =  \dim T_\rho R_n(\Gamma)_\mathit{red}$ then $\rho$ is a smooth point of the representation variety $R_n(\Gamma)_\mathit{red}$.
We call $\rho\in R_n(\Gamma)$ \emph{scheme smooth} if
$\dim_\rho R_n(\Gamma) = \dim Z^1(\Gamma;\Ad\rho)$. The representation $\rho\in R_n(\Gamma)$ is called \emph{reduced} if the local ring $\mathcal{O}_\rho(R_n(\Gamma))$ is reduced.
In this case $\dim T_\rho R_n(\Gamma)_\mathit{red} = \dim T_\rho R_n(\Gamma)$ holds independent from the first inequality. Therefore, a reduced representation is scheme smooth if and only if it is a smooth point of the representation variety.

It is easy to see that the tangent space to the orbit by conjugation $ O(\rho)$ of $\rho$ corresponds to
the space of principal derivations $B^1(\Gamma;\Ad\rho)$. Here a principal derivation is a map $b\co\Gamma\to\sln$ for which there exists $X\in\sln$ such that for all $\gamma\in\Gamma$ we have $b(\gamma) = \Ad_{\rho(\gamma)} (X) - X$. A detailed account can be found in \cite{Lubotzky-Magid1985}.

\medskip

The following lemma will be useful in what follows.
\begin{lemma}\label{lem:reducedpoints}
Let $M$ denote a three-dimensional compact, orientable manifold with torus boundary $\partial M \cong S^1\times S^1$.
 Let $\rho\in R_n(\pi_1M)$ be a representation. We have
\begin {enumerate}
\item \label{red1} $\dim H^1(\pi_1 M,\Ad\rho) \geq n-1$ ;
\item \label{red2} Suppose that  $\rho$ is a reduced point of the representation scheme $R_n(\pi_1 M)$, and let $V$ be a component of the
representation variety $R_n(\pi_1 M)_{red}$ such that $\rho\in V$.
If $V$ contains a representation $\varrho$ such that $\dim H^0(M,\Ad\varrho)=k$ then we have $\dim V\geq n^2+n-2-k$.
\end{enumerate}
\end{lemma}
\begin{proof}

In order to prove {\it(\ref{red1})}, first notice that
 $H^1(\pi_1M; \Ad\rho)\cong H^1(M; \Ad\rho)$ (Lemma~3.1 in \cite{Heusener-Porti2005}). 
On the other hand,  it follows from the 
\emph{Half Lives Half Dies lemma} that $\dim H^1( M,\Ad\rho) \geq n-1$ (see Lemme 3.2 in \cite{Heusener-Porti2015}).

To prove {\it(\ref{red2})} assume that $V$ is a component of $R_n(\pi_1M)_{red}$ which contains the reduced representation $\rho$.
Then $V' := \overline{(R_n(\pi_1M)_{red}\smallsetminus V)} \cap V$ is a Zariski-closed subset of $V$ of smaller dimension than $V$. Hence,
$U' := V\smallsetminus V'$ is a Zariski-open non-empty subset of $V$.
Simple points of $V$ form also a Zariski-open non-empty subset $U$  of $V$.
Hence $U\cap U'$ is a non-empty Zariski-open subset of $V$ and hence Zariski-dense. 

Now, for all $\sigma \in U\cap U'$ we have:
\[
\dim V = \dim T_\sigma V = \dim  T_\sigma R_n(\pi_1M)_{red} \leq \dim T_\sigma R_n(\pi_1M) = \dim Z^1(\pi_1M;\Ad\sigma)\,.
\]

Next we will show that the hypothesis  $\dim V <n^2 + n -2-k$ implies that $\rho$ is non-reduced, and this contradicts the assumptions.
For all $\sigma \in U\cap U'$, we had:
\[
\dim V =  \dim T_\sigma R_n(\pi_1M)_{red} < n^2 -1-k + n-1\,.
\]
Notice that $\dim B^1(M;\Ad\sigma) =n^2-1-k$ and by {\it(\ref{red1})}  $\dim H^1(\pi_1 M;\Ad\rho)\geq n-1$. Hence
\[ 
 \dim T_\sigma R_n(\pi_1M)_{red} < n^2 -1-k+ n-1 \leq  \dim Z^1(\pi_1 M;\Ad\sigma) = \dim T_\sigma R_n(\pi_1M)\,.
 \]
 It follows that all representations $\sigma \in U\cap U'$ are non-reduced points of the representation scheme $R_n(\pi_1M)$ since
 $\dim T_\sigma R_n(\pi_1M)_{red} < \dim T_\sigma R_n(\pi_1M)$. On the other hand the non-reduced representations form a 
 Zariski-closed subset of $R_n(\pi_1M)$ (see \cite[Lemma~8]{HP23}). Therefore, we had  $\rho\in V = \overline{U\cap U'}$ is non-reduced. This is the desired contradiction.
\end{proof}

\subsection{The Alexander module}
\label{Alexander_module}
In this subsection we recall the main results from \cite{BenAH15} concerning the Alexander module and certain cohomological results that we will use in what follows (see also \cite{BenAbdelghani-Lines2002} and \cite{BenAbdelghani2000}).

Given a knot $K$ in a three-dimensional integer homology sphere $M^3$, we denote by $X=\overline{M^3\backslash V(K)}$ its exterior. Moreover, we let $\Gamma=\pi_1(X)$ denote the fundamental group and let  $h\co\Gamma\to\Z$ denote the abelianization morphism, so that $h(\gamma)=\mathrm{lk}(\gamma,K)$ is the linking number in $M^3$ between any loop realizing $\gamma\in\Gamma$ and $K$. 
There is a short exact splitting sequence:
\[ 1\to\Gamma'\to\Gamma\to Z\to 1\]
where $\Gamma'=[\Gamma,\Gamma]$ denotes the commutator subgroup of $\Gamma$, and $Z = \langle t\mid -\rangle$ is a free cyclic group generated by a distinguished generator $t$ represented by a meridian of $K$. The surjection $\Gamma\to Z$  is given
by $\gamma\mapsto t^{h(\gamma)}$. Hence $\Gamma$ can be written as a  semi-direct product $\Gamma = \Gamma' \rtimes Z$.
Note that $\Gamma'$ is the fundamental group of the infinite cyclic covering $X_\infty$ of $X$. So $H_1(X_\infty;\Z)\cong\Gamma'/\Gamma''$.
Moreover, the previous short exact splitting sequence induces the sequence:
\[ 1\to\Gamma'/\Gamma''\to\Gamma/\Gamma''\to Z \to 1\;.\]
Hence $\Gamma/\Gamma''$ can be written as $\Gamma/\Gamma''= \Gamma'/\Gamma'' \rtimes Z$.
 
As $\Lambda:=\C[t^{\pm1}]$ is a principal ideal domain, the complex version 
$\Gamma'/\Gamma''\otimes\C\cong H_1(\Gamma;\Lambda)$ of the Alexander module decomposes into a direct sum of cyclic modules of the form
$\Lambda/(t-\alpha)^k$, $\alpha\in\C^*\setminus\{1\}$ i.e. there exist
$\alpha_1,\ldots,\alpha_s\in\C^*\setminus\{1\}$ such that:

\[
H_1(\Gamma; \Lambda) \cong \tau_{\alpha_1}\oplus\cdots\oplus\tau_{\alpha_s}\quad\text{ where} \quad
\tau_{\alpha_j}=\bigoplus_{i_j=1}^{n_{\alpha_j}}\Lambda/(t-\alpha_j)^{r_{i_j}}
\]
denotes the $(t-\alpha_j)$-torsion of $H_1(\Gamma; \Lambda)$ and $n_j = \dim H_1(\Gamma;\Lambda/(t-\alpha_j))$.

A generator of the order ideal of $H_1(\Gamma;\Lambda)$ is called the \emph{Alexander polynomial} $\Delta_{K} \in \Lambda$ of $K$ i.e.
$\Delta_{K}$ is the product: 
\[
\Delta_{K}(t)=\prod_{j=1}^s \prod_{i_j=1}^{n_{\alpha_j}} (t-\alpha_j)^{r_{j_i}}\,.
\]
Notice that the Alexander polynomial is  symmetric and is well defined up to multiplication by a unit
$c\,t^k$ of
$\Lambda$, $c\in\C^*$, $k\in\Z$. It is well known that $\Delta_K$ can be normalized such that $\Delta_K$ has only integer coefficients, and 
$\Delta_K(1) = 1$. Hence the $(t-1)$-torsion of the Alexander module is trivial.
 
If $\alpha$ is a simple root of $\Delta_K$ then the $(t-\alpha)$-torsion of the Alexander module is of the form
$\C_\alpha = \Lambda/(t-\alpha)$. 
It follows from  Blanchfield-duality \cite[Chapter~7]{Gordon} that in this case the $(t-\alpha^{-1})$-torsion of the Alexander module is 
of the same form $\C_{\alpha^{-1}} = \Lambda/(t-\alpha^{-1})$, thus $\alpha^{-1}$ is also a simple root of
 $\Delta_K$.

For completeness we will state the next result which shows that
the cohomology groups $H^*(\Gamma; \Lambda/(t-\alpha)^k)$ are determined by the Alexander module
$H_1(\Gamma; \Lambda)$. Recall that the action of $\Gamma$ on
$\Lambda/(t-\alpha)^k$ is induced by $\gamma\cdot p(t) = t^{h(\gamma)} p(t)$.
Denote by $\pi_2$ the composition:
\[
\pi_2\co\Gamma' \to\Gamma'/\Gamma''\to\Gamma'/\Gamma'' \otimes\C\cong H_1(\Gamma;\Lambda)\;.
\]

\begin{prop}\label{prop:H1GammaC} Let $K\subset M^3$ be a knot in an integer homology sphere and $\Gamma = \Gamma'\rtimes Z$ its group.
Let $\alpha\in\C^*$  and let
$\tau_{\alpha} = \bigoplus_{i=1}^{n_\alpha} \Lambda \big/ (t-\alpha)^{r_i}$ denote
the $(t-\alpha)$-torsion of the Alexander module $H_1(\Gamma;\Lambda)$ where $n_\alpha = \dim H_1(\Gamma;\C_\alpha)$.
\begin{enumerate}
\item If $\alpha=1$ then $\C = \C_1$ is the trivial $\Gamma$-module and 
\[
H^q(\Gamma; \C)\cong
\begin{cases}
\C & \text{for $q=0,1$}, \\
0   & \text{for $q= 2$.}
\end{cases}
\]
\item For $\alpha\neq1$ with $\Delta_K(\alpha)\neq 0$ we have:
\[
H^q(\Gamma; \C_\alpha)\cong0 \quad\text{ for $q\geq 0$.} 
\]
\item 
If $\alpha\in\C^*$ is a  root of the Alexander polynomial $\Delta_K$ then the morphism

 \begin{align*}
 \psi\co 
 H^1(\Gamma'\rtimes Z;{\C}_{\alpha})&\to
 \operatorname{Hom}_\Lambda(\Gamma'/\Gamma''\otimes\C,\C_\alpha)  \\
 [V]&\mapsto \tilde{V}; \ \tilde{V}(\pi_2(x))=V(x,1)\ \forall x\in\Gamma'
\end{align*}
is an isomorphism.

\item For $\alpha$ a simple root of $\Delta_K$  we have:
 \[
H^q(\Gamma; \C_\alpha)\cong
\begin{cases}
0 & \text{for $q=0$}, \\
\C   & \text{for $q=1,2$.}
\end{cases}
\]
In particular, as
$H_1(\Gamma;\Lambda)\cong\Gamma'/\Gamma''\otimes\C=\Lambda/(t-\alpha)\oplus\tau$ where $\tau$ is a torsion $\Lambda$-module with no $(t-\alpha)$-torsion, $H^1(\Gamma;{\C}_{\alpha})$ is generated by the non-principal derivation $u_\alpha$ uniquely defined by the condition 
$\tilde u_\alpha(e_\alpha)=1$ and 
 $\tilde u_\alpha(y)  =0\ \forall y\in\tau$
where $(e_\alpha)$ is a basis of $\Lambda/(t-\alpha)$.
\end{enumerate}
\end{prop}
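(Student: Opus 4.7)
The main tool is the Lyndon--Hochschild--Serre spectral sequence for the split extension $1\to\Gamma'\to\Gamma\to Z\to 1$,
\[
E_2^{p,q}=H^p(Z;H^q(\Gamma';\C_\alpha))\Longrightarrow H^{p+q}(\Gamma;\C_\alpha),
\]
which, because $Z\cong\Z$ has cohomological dimension one, degenerates to the short exact sequences
\[
0\to H^1(Z;H^{q-1}(\Gamma';\C_\alpha))\to H^q(\Gamma;\C_\alpha)\to H^0(Z;H^q(\Gamma';\C_\alpha))\to 0
\]
in every degree. Since $h(\Gamma')=0$, the subgroup $\Gamma'$ acts trivially on $\C_\alpha$, so $H^0(\Gamma';\C_\alpha)=\C_\alpha$ and $H^1(\Gamma';\C_\alpha)=\hom_\C(\Gamma'/\Gamma''\otimes\C,\C_\alpha)$, and the remaining $Z$-action combines conjugation on $\Gamma'$ (which reproduces the $\Lambda$-module structure of the Alexander module) with multiplication by $\alpha$ on $\C_\alpha$. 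For a $Z$-module $M$ one uses $H^0(Z;M)=M^t$ and $H^1(Z;M)=M/(t-1)M$; in particular $H^*(Z;\C_\alpha)$ vanishes when $\alpha\neq 1$ and equals $\C$ in degrees $0,1$ when $\alpha=1$.

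Parts (1) and (2) then reduce to direct book-keeping. For (1), the knot exterior $X$ is a $K(\Gamma,1)$ homotopy equivalent to a finite $2$-complex with $\chi(X)=0$, forcing $H^q(\Gamma;\C)=0$ for $q\geq 3$, $H^0=\C$, $H^1=\C$ (from $\Gamma^{\mathrm{ab}}=\Z$), and then $H^2=0$ by Euler characteristic. For (2), with $\alpha\neq 1$ and $\Delta_K(\alpha)\neq 0$, the same $2$-complex argument gives $H^q=0$ for $q\geq 3$; from the spectral sequence $H^0(\Gamma;\C_\alpha)=H^0(Z;\C_\alpha)=0$, while in degree one it collapses to $H^1(\Gamma;\C_\alpha)\cong H^1(\Gamma';\C_\alpha)^Z=\hom_\Lambda(H_1(\Gamma;\Lambda),\C_\alpha)$, which is zero by the hypothesis that $H_1(\Gamma;\Lambda)$ has no $(t-\alpha)$-torsion; finally $H^2=0$ by Euler characteristic with $\C_\alpha$-coefficients.

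For (3), still under $\alpha\neq 1$, the same collapse yields the restriction isomorphism $H^1(\Gamma;\C_\alpha)\xrightarrow{\sim}H^1(\Gamma';\C_\alpha)^Z$, and the key identification is that a $\Z$-linear map $f\co\Gamma'/\Gamma''\to\C_\alpha$ is $Z$-invariant if and only if $f(txt^{-1})=\alpha\cdot f(x)$ for every $x\in\Gamma'$, which is exactly $\Lambda$-linearity of its $\C$-linear extension on $\Gamma'/\Gamma''\otimes\C$. I would then verify the two explicit formulas in the statement by direct calculation: for $U\in\hom_\Lambda(\Gamma'/\Gamma''\otimes\C,\C_\alpha)$ setting $\bar U(x,t^k):=U(\pi_2(x))$, one checks the cocycle identity by expanding the semidirect product multiplication $(x,t^k)(y,t^l)=(x\cdot t^k y t^{-k},t^{k+l})$ and invoking $\Lambda$-linearity of $U$; conversely for $V\in Z^1(\Gamma;\C_\alpha)$ setting $\tilde V(\pi_2(x)):=V(x,1)$, one observes that $V\!\!\mid_{\Gamma'}$ is additive (as $\Gamma'$ acts trivially on $\C_\alpha$), vanishes on coboundaries, so descends to $\Gamma'/\Gamma''\otimes\C$, and is $\Lambda$-linear by applying the cocycle relation to $(1,t)(x,1)(1,t^{-1})=(txt^{-1},1)$. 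Part (4) then follows at once: for a simple root $\alpha$ the decomposition $H_1(\Gamma;\Lambda)=\Lambda/(t-\alpha)\oplus\tau$ gives $H^1(\Gamma;\C_\alpha)\cong\hom_\Lambda(\Lambda/(t-\alpha),\C_\alpha)\cong\C$, generated by the $u_\alpha$ described in the statement; $H^0=0$ since $\alpha\neq 1$, and $H^2\cong\C$ follows from $\chi=0$. The delicate step I anticipate is in (3): matching the abstract LHS identification with the explicit formulas for $\Phi$ and $\psi$, and in particular verifying that $\Phi\circ\psi=\mathrm{id}$ on cohomology requires controlling the coboundary by which $V(x,t^k)$ and $U(\pi_2(x))$ differ, which is a short but careful bookkeeping with the semidirect product.
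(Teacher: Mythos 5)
Your argument via the Lyndon--Hochschild--Serre spectral sequence for the split extension $1\to\Gamma'\to\Gamma\to Z\to 1$, which degenerates to the Wang short exact sequences because $Z\cong\Z$ has cohomological dimension one, is correct. The paper itself delegates the proof to \cite[Proposition~2.1 and Proof of Theorem~3.1]{BenAbdelghani2000}, and the cited argument uses essentially the same machinery (the Wang sequence for the infinite cyclic cover together with the identification of $Z$-invariant homomorphisms on $H_1(\Gamma;\Lambda)$ with $\Lambda$-linear ones), so your sketch is faithful both in strategy and in the particular bookkeeping for parts (3) and (4).
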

\begin{proof} For the proof see \cite [Proposition~2.1 and Proof of Theorem~3.1]{BenAbdelghani2000}.

\end{proof}

The next lemma forms the basis for our subsequent results and is implicitly contained in \cite [Proposition~3.5]{BenAH15}  
(see also \cite[Theorem~3.1]{BenAbdelghani2000}, \cite[Theorem~3.2]{BenAbdelghani-Lines2002}).

\begin{lemma}\label{lem:cupproduct}
Let $K\subset M^3$ be a knot in an integer homology sphere and $\Gamma$ its group.
Let $\alpha\in\C^*$ be a simple root of the Alexander polynomial $\Delta_K$ and 
$h\co \Gamma \to {\Z}$ the canonical epimorphism. The cup product:
\[
h \smallsmile\cdot\co H^1(\Gamma; {\C}_{\alpha^{\pm1}})\to H^2(\Gamma; {\C}_{\alpha^{\pm1}})
\]
is an isomorphism. In other words, if $u^\pm$ is a generator of $H^1(\Gamma;{\C}_{\alpha^{\pm1}})$ then
 $H^2(\Gamma;{\C}_{\alpha^{\pm1}})$ is generated by $h\smallsmile u^\pm$.
\end{lemma}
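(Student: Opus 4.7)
My strategy is to identify the cup product with $h$ as a non-zero scalar multiple of a Bockstein homomorphism, then show the Bockstein is non-zero using the structure of the Alexander module at a simple root. By Proposition~\ref{prop:H1GammaC}(4) both $H^1(\Gamma;\C_{\alpha^\pm})$ and $H^2(\Gamma;\C_{\alpha^\pm})$ are one-dimensional, so $h\smallsmile\cdot$ is either zero or an isomorphism; it suffices to show it is not the zero map. I treat the case of $\C_\alpha$; the case of $\C_{\alpha^{-1}}$ is identical, since Blanchfield duality (cited in the text) ensures $\alpha^{-1}$ is also a simple root of $\Delta_K$.

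Consider the short exact sequence of $\Gamma$-modules
\[
0\to\C_\alpha\xrightarrow{\cdot(t-\alpha)}\Lambda/(t-\alpha)^2\xrightarrow{\bmod(t-\alpha)}\C_\alpha\to 0
\]
and its associated Bockstein $\beta\co H^1(\Gamma;\C_\alpha)\to H^2(\Gamma;\C_\alpha)$. In the $\C$-basis $e_0=1$, $e_1=(t-\alpha)$ of the middle term, the generator $t$ acts by $t\cdot e_0=\alpha e_0+e_1$ and $t\cdot e_1=\alpha e_1$, and an easy induction gives $t^k\cdot e_0=\alpha^ke_0+k\alpha^{k-1}e_1$. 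Lifting a cocycle $u\in Z^1(\Gamma;\C_\alpha)$ canonically by $\tilde u(\gamma):=u(\gamma)\cdot e_0$, a direct cochain computation using the cocycle identity for $u$ yields $\delta\tilde u(\gamma_1,\gamma_2)=\alpha^{h(\gamma_1)-1}h(\gamma_1)u(\gamma_2)\cdot e_1$, i.e.\ $\delta\tilde u=\alpha^{-1}(h\smallsmile u)\cdot e_1$. Under the $\Gamma$-equivariant identification $\C_\alpha\cong\C\cdot e_1$ provided by the inclusion, this reads $\beta([u])=\alpha^{-1}\,[h\smallsmile u]$, so it remains to prove $\beta\neq 0$.

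Since $\alpha\neq 1$, both $H^0(\Gamma;\C_\alpha)$ and $H^0(\Gamma;\Lambda/(t-\alpha)^2)$ vanish, so from the long exact sequence $\beta$ is non-zero exactly when the reduction-induced map $p_*\co H^1(\Gamma;\Lambda/(t-\alpha)^2)\to H^1(\Gamma;\C_\alpha)$ fails to be surjective. I then repeat the argument of Proposition~\ref{prop:H1GammaC}(3) for the coefficient module $\Lambda/(t-\alpha)^2$: in the Hochschild-Serre spectral sequence of $1\to\Gamma'\to\Gamma\to\Z\to 1$ the $\Z$-contribution vanishes because $(t-1)$ acts invertibly on $\Lambda/(t-\alpha)^2$ (again using $\alpha\neq 1$), giving $H^1(\Gamma;\Lambda/(t-\alpha)^2)\cong\hom_\Lambda(H_1(\Gamma;\Lambda),\Lambda/(t-\alpha)^2)$, and $p_*$ is transported to post-composition with $p$ on $\Lambda$-linear $\hom$-groups. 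The simple-root hypothesis now makes the obstruction explicit: the $(t-\alpha)$-torsion of $H_1(\Gamma;\Lambda)$ is a single summand $\Lambda/(t-\alpha)=\C\cdot e_\alpha$, and any $\Lambda$-linear $\tilde\phi$ must send $e_\alpha$ into $\ker(t-\alpha\mid\Lambda/(t-\alpha)^2)=\C\cdot e_1$; hence $p(\tilde\phi(e_\alpha))=0$, while by Proposition~\ref{prop:H1GammaC}(4) the generator $u_\alpha$ corresponds to a $\tilde u_\alpha$ with $\tilde u_\alpha(e_\alpha)=1$. Therefore $p_*=0$, so $\beta$, and consequently $h\smallsmile\cdot$, is an isomorphism.

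The main delicate point is extending the identification of Proposition~\ref{prop:H1GammaC}(3) from the irreducible module $\C_\alpha$ to the non-semisimple two-dimensional module $\Lambda/(t-\alpha)^2$; once the cohomology is recast as a $\Lambda$-linear $\hom$-group, the simple-root hypothesis makes the failure of lifting completely transparent, and the cup-product identification of $\beta$ with $\alpha^{-1}(h\smallsmile\cdot)$ is a short formal computation.
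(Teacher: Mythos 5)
Your proof is correct. The paper itself gives no argument for this lemma — it defers to \cite{BenAH15}, \cite{BenAbdelghani2000}, and \cite{BenAbdelghani-Lines2002} — so what you have written is a self-contained alternative. The governing idea, identifying $h\smallsmile\cdot$ with the Bockstein $\beta$ of the extension $0\to\C_\alpha\to\Lambda/(t-\alpha)^2\to\C_\alpha\to 0$ (up to the harmless factor $\alpha^{-1}$), is verified by the displayed cochain computation, and it converts the cup-product question into a lifting question that the simple-root hypothesis answers transparently: any $\Lambda$-linear map $\Gamma'/\Gamma''\otimes\C\to\Lambda/(t-\alpha)^2$ must send the order-$(t-\alpha)$ generator $e_\alpha$ into $\ker(t-\alpha)=(t-\alpha)\cdot\Lambda/(t-\alpha)^2=\ker p$, so $p_*$ kills it, while the generator $u_\alpha$ of $H^1(\Gamma;\C_\alpha)$ has $\tilde u_\alpha(e_\alpha)=1$. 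Two small points would benefit from being made explicit in a full write-up. First, Proposition~\ref{prop:H1GammaC}(3) as stated in the paper is formulated only for the one-dimensional module $\C_\alpha$; your argument needs the same identification $H^1(\Gamma;M)\cong\operatorname{Hom}_\Lambda(\Gamma'/\Gamma''\otimes\C,M)$ for $M=\Lambda/(t-\alpha)^2$, together with naturality in $M$ to transport $p_*$ to post-composition — you flag this and the extension is indeed routine (only invertibility of $t-1$ on $M$ is used), but it is a step beyond the quoted statement. Second, the inference ``therefore $p_*=0$'' from $p(\tilde\phi(e_\alpha))=0$ relies on $\operatorname{Hom}_\Lambda(\tau,\C_\alpha)=0$, i.e.\ that an element of $\operatorname{Hom}_\Lambda(\Gamma'/\Gamma''\otimes\C,\C_\alpha)$ is determined by its value on $e_\alpha$; that fact (which follows since $\tau$ has no $(t-\alpha)$-torsion) is used silently. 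Compared with the group-presentation/Fox-calculus style computations in the cited references, your Bockstein route is more structural and arguably more illuminating about why the simple-root hypothesis is exactly what is needed.
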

\begin{remark}\label{rq:cup}
Observe that by Equation~\eqref{eq:cupbil} the cocycles  $h\smallsmile u^\pm$ and $-u^\pm\smallsmile h$ are cohomologous. 

\end{remark}

\section{Proof of the first part of Theorem~\ref{thm:main1}}
\label{sec:main}

Our strategy consists in introducing a certain solvable representation $\rho^{(n)}_D\in R_n(\Gamma)$ whose orbit contains in its closure the diagonal representation $\rho_D$. As a consequence, the representations $\rho_D$ and $\rho^{(n)}_D$ are on the same component (see Subsection 3.1).
Using a theorem of \cite {Heusener-Medjerab2014} based on cohomological calculations, we prove that $\rho^{(n)}_D$ is a smooth point of $R_n(\Gamma)$ which is contained in a unique irreducible component $R^{(n)}_D$ of dimension $n^2+n-2$ (see Subsection 3.2). We finish the section by proving that $R^{(n)}_D$ contains irreducible representations by making use of Burnsides matrix lemma (see Subsection 3.3).
It is to mention that our strategy reproduces in a certain manner the one used by \cite  {Heusener-Porti-Suarez2001} for the study of the local structure of the representation space $R_2(\Gamma)$ in the neighborhood of an abelian representation which corresponds to a simple root of the Alexander polynomial of the knot. It is based on the fact that  $\rho^{(n)}_D$ is less singular than $\rho_D$ in the sense that $\dim O(\rho^{(n)}_D)=n^2-1$ whereas 
$\dim O(\rho_D)=n^2-n$ (see Proposition~\ref{prop:upper-triang-cohom}).

\subsection{The triangular representation}
\label{sec:SolvableRepresentation}

Let $D = D(\lambda_1,\ldots,\lambda_n)\in \SLn$ be a diagonal matrix with $\lambda_i\neq\lambda_j$ for
$i\neq j$. The aim of this section is to prove the existence of a certain upper triangular representation
$\rho^{(n)}_D\co\Gamma\to \SLn$. We will show that $\rho^{(n)}_D\in R_n(\Gamma)$ is a smooth point (see Theorem~\ref{thm:upper-triang-deform}), whereas the diagonal representation $\rho_D=D^h$ is not (see Remark~\ref{rem:components}).

\paragraph{Assumption.}
Throughout this section we will assume that
\[\alpha_i := \lambda_i/\lambda_{i+1},\ i=1,\ldots,n-1,\
\text{ are simple roots of the Alexander polynomial}\ \Delta_K\]
 and that 
\[\Delta_K(\lambda_i/\lambda_j)\neq0\ \text{if}\ |i-j|\geq 2.\]

\begin{lemma}\label{lem:upper-triang-exist}
If $u_i^+\in Z^1(\Gamma;\C_{\alpha_i})$, $i=1,\ldots,n-1$, are derivations then there exist  upper triangular representations $\rho^{(n)}_D\co\Gamma\to \SLn$ of the form:
\begin{equation}\label{eq:SolvableRep}
\rho^{(n)}_D(\gamma) =
\begin{pmatrix}
1 & u^+_1(\gamma) & * & \dots & * \\
0 & 1 & u^+_2(\gamma) & \ddots &\vdots \\
\vdots & \ddots & \ddots & \ddots &* \\
\vdots &  & \ddots & 1 & u^+_{n-1}(\gamma)\\
0 & \dots & \dots  & 0 & 1
\end{pmatrix}
D(\lambda_1,\ldots,\lambda_n)^{h(\gamma)}\;.
\end{equation}
Moreover,  the closure of the orbit of $\rho^{(n)}_D$ contains the diagonal representation $\rho_D$.

\end{lemma}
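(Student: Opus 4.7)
The plan is to realize $\rho^{(n)}_D$ in the factorized form $\rho^{(n)}_D(\gamma) = W(\gamma)\, D^{h(\gamma)}$, where $W(\gamma) = (w_{ij}(\gamma))$ is unipotent and upper triangular. Expanding the multiplicativity identity $\rho^{(n)}_D(\gamma_1\gamma_2) = \rho^{(n)}_D(\gamma_1)\,\rho^{(n)}_D(\gamma_2)$ and using that $D$ is diagonal, the $(i,j)$-entry with $i<j$ reduces to
\[
w_{ij}(\gamma_1\gamma_2) \;=\; w_{ij}(\gamma_1) \;+\; (\lambda_i/\lambda_j)^{h(\gamma_1)} w_{ij}(\gamma_2) \;+\; \sum_{i<k<j} w_{ik}(\gamma_1)\,(\lambda_k/\lambda_j)^{h(\gamma_1)}\,w_{kj}(\gamma_2).
\]
Viewing $w_{ij}$ as a $1$-cochain with values in the twisted module $\C_{\lambda_i/\lambda_j}$, this is equivalent to
\[
\delta w_{ij} \;=\; -\sum_{i<k<j} w_{ik}\smallcup w_{kj},
\]
where $\smallcup$ denotes the cup product induced by the $\Gamma$-equivariant pairing $\C_{\lambda_i/\lambda_k} \otimes \C_{\lambda_k/\lambda_j} \to \C_{\lambda_i/\lambda_j}$ given by multiplication.

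I would then build the $w_{ij}$ by induction on $d := j-i$. The base case $d=1$ is free: set $w_{i,i+1} := u_i^+$, which is a cocycle by hypothesis (the cup product sum is empty). For $d\ge 2$, assume all $w_{i'j'}$ with $j'-i'<d$ have been constructed. To extract $w_{ij}$ I must show that the cochain
\[
\Omega_{ij} \;:=\; \sum_{i<k<j} w_{ik}\smallcup w_{kj} \;\in\; C^2(\Gamma;\C_{\lambda_i/\lambda_j})
\]
is a coboundary. Two points are needed. First, $\Omega_{ij}$ is itself a $2$-cocycle: expanding $\delta\Omega_{ij}$ via the Leibniz rule $\delta(u\smallcup v) = \delta u\smallcup v - u\smallcup\delta v$ and substituting the inductive relations for $\delta w_{ik}$ and $\delta w_{kj}$, every term becomes a triple product $w_{ia}\smallcup(w_{ab}\smallcup w_{bj})$ with $i<a<b<j$; the two resulting double sums carry opposite signs and cancel by associativity of the cup product (which applies here because the bilinear pairings are ordinary multiplication). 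Second, the hypotheses ensure that $\lambda_i/\lambda_j \neq 1$ and $\Delta_K(\lambda_i/\lambda_j)\neq 0$ whenever $|i-j|\ge 2$, so Proposition~\ref{prop:H1GammaC}(2) gives $H^2(\Gamma;\C_{\lambda_i/\lambda_j}) = 0$. Thus $\Omega_{ij}$ is a coboundary, and any $w_{ij}$ with $\delta w_{ij} = -\Omega_{ij}$ completes the induction. The resulting $\rho^{(n)}_D$ lands in $\SLn$ because $W$ is unipotent and $\det D = 1$.

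For the orbit closure assertion I would use the one-parameter subgroup $S_t := \mathrm{diag}(t^{n-1}, t^{n-3}, \ldots, t^{1-n}) \in \SLn$, which commutes with $D$ and scales the $(i,j)$-entry of $W(\gamma)$ by $t^{2(j-i)}$. Hence
\[
\lim_{t \to 0} S_t\,\rho^{(n)}_D(\gamma)\,S_t^{-1} \;=\; D^{h(\gamma)} \;=\; \rho_D(\gamma)
\]
for every $\gamma\in\Gamma$, placing $\rho_D$ in the closure of the $\SLn$-orbit of $\rho^{(n)}_D$. The main obstacle is the cocycle verification for $\Omega_{ij}$: the combinatorial cancellation is the sole nontrivial input, and it has to be handled carefully, tracking the signs from the Leibniz rule and reindexing the triple sums so that associativity genuinely matches the two contributions.
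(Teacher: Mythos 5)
Your proposal is correct and follows essentially the same inductive scheme as the paper: translate multiplicativity into the coboundary equations $\delta w_{ij}=-\sum_{i<k<j}w_{ik}\smallcup w_{kj}$, kill the off--super-diagonal obstructions using $H^2(\Gamma;\C_{\lambda_i/\lambda_j})=0$ for $|i-j|\ge 2$, and degenerate the orbit to $\rho_D$ by conjugating with a one-parameter diagonal subgroup. You are in fact slightly more careful than the paper on two points: you explicitly check that $\Omega_{ij}$ is a $2$-cocycle (so that $H^2=0$ really produces a primitive, a step the paper leaves tacit), and your degenerating matrices $S_t=\operatorname{diag}(t^{n-1},t^{n-3},\ldots,t^{1-n})$ have determinant one, whereas the paper's $C_t=\operatorname{diag}(t^{n-1},\ldots,t,1)$ lie in $\GLn$ and must be rescaled to land in $\SLn$.
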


\begin{proof}
It is easy to verify that a map given by

\[
\rho(\gamma) =
\begin{pmatrix}
1 & z_{12}(\gamma) & * & \dots & z_{1n}(\gamma) \\
0 & 1 & z_{23}(\gamma) & \ddots &\vdots \\
\vdots & \ddots & \ddots & \ddots &* \\
\vdots &  & \ddots & 1 & z_{n-1,n}(\gamma)\\
0 & \dots & \dots  & 0 & 1
\end{pmatrix}
D(\lambda_1,\ldots,\lambda_n)^{h(\gamma)}
\]
is a representation if and only if for all $\ 1\leq i< j\leq n$ and all $\gamma_1,\gamma_2\in\Gamma$:
\begin{align*}
z_{ij} (\gamma_1\gamma_2) &= \sum_{k=i}^{j}z_{ik}(\gamma_1) \big(\frac{\lambda_k}{\lambda_j}\big)^{h(\gamma_1)} z_{kj}(\gamma_2)\\
&=  \big(\frac{\lambda_i}{\lambda_j}\big)^{h(\gamma_1)} z_{ij}(\gamma_2) +
\sum_{k=i+1}^{j-1}(z_{ik}\smallcup z_{kj})(\gamma_1,\gamma_2) + z_{ij}(\gamma_1)
\end{align*}
holds. Here we have used the convention that $z_{ii}(\gamma) =1$ for $1\leq i\leq n$ and $\gamma\in\Gamma$. 
\\
This equation is equivalent to
\begin{equation}\label{eq:equation_cup}
\delta z_{ij} + \sum_{i<k<j}z_{ik}\smallcup z_{kj} =0 \quad\text{ for all $1\leq i< j\leq n\,.$}
\end{equation}
It implies that $z_{i,i+1}\in Z^1(\Gamma;\C_{\alpha_i})$ if $1\leq i\leq n-1$ and 
\[
-\delta z_{ij} = 
\sum_{i<k<j}z_{ik}\smallcup z_{kj}\in B^2(\Gamma;\C_{\lambda_i/\lambda_{j}})
\quad \text{ if $j-i\geq2$.}
\]
By Proposition~\ref{prop:H1GammaC}, since $\lambda_i/\lambda_{j}$ is not a root of 
$\Delta_K$ for $|i-j|\geq2$, we have $H^2(\Gamma;\C_{\lambda_i/\lambda_j})=0$ and
 the cocycle $\sum_{i<k<j}z_{ik}\smallcup z_{kj}$ is a coboundary. Notice  also that by Proposition~\ref{prop:H1GammaC}, there exist non principal derivations $z\in Z^1(\Gamma;\C_{\alpha})$ if and only if $\alpha$ is  a root of $\Delta_K$.

For $t\neq 0$, let $C_t$ be the diagonal matrix $C_t:=\mathrm{diag}(t^{n-1},t^{n-2}, \ldots,1)$. Then it is easy to see that $\lim_{t \to0}C_t\rho^{(n)}_DC_t^{-1}=\rho_D$. This proves that 
$\rho_D\in\overline{ O(\rho^{(n)}_D)}$.
\end{proof}

\begin{remark}

\begin{enumerate}
\item The representation $\rho^{(n)}_D$ is non-abelian if  there exists $i$, $1\leq i\leq n-1$, such that 
$z_{i,i+1}= u_{i}^+\in Z^1(\Gamma;\C_{\alpha_i})$ is a non principal derivation. Indeed, by Lemma 2.1 of \cite{Heusener-Porti-Suarez2001} we can find a set of generators 
$\gamma_1, \ldots, \gamma_k$ of $\Gamma$ such that $h(\gamma_l)=1$ for $l=1,\ldots,k$. 
As $u_{i}^+$ is non-principal, there exist $\gamma_l$ and $\gamma_{l'}$ such that $u_{i}^+(\gamma_l)\neq u_{i}^+(\gamma_{l'})$. 
It follows that $\rho^{(n)}_D(\gamma_l\gamma_{l'})\neq \rho^{(n)}_D(\gamma_{l'}\gamma_l)$  since
\[
 u_{i}^+(\gamma_l\gamma_{l'}) =  u_{i}^+(\gamma_{l'}\gamma_{l}) \quad\Leftrightarrow\quad 
 \big(u_{i}^+(\gamma_l) - u_{i}^+(\gamma_{l'}) \big) (1-\alpha_i) = 0\,. 
\]

\item Once the derivations $u_i^+,\ i=1,\ldots,n-1$, are fixed, two such representations $\rho^{(n)}_D$ are conjugated. This claim is a consequence of the fact that $H^2(\Gamma;\C_{\lambda_i/\lambda_j})=0$ for $|i-j|\geq2$, and the fact that adding a coboundary $b(\gamma) = ((\lambda_i/\lambda_j)^{h(\gamma)}-1)a$ to $z_{ij}$, $j-i\geq2$, can be realized by conjugation by the matrix $I_n-aE_{i}^j$. Here, $(E_i^j)_{1\leq i,j\leq n}$ is the canonical basis of 
$\gln$.

\end{enumerate}

\end{remark}

The aim of this section is to prove the next theorem.
\begin{thm}\label{thm:upper-triang-deform}
Suppose that  $\alpha_i=\lambda_i/\lambda_{i+1}$, $i=1,\ldots,n-1$, are simple roots of the Alexander polynomial $\Delta_K$, and that $\Delta_K(\lambda_i/\lambda_j)\neq0$ if $|i-j|\geq 2$.
Then for given non-principal derivations $u_i^+\in Z^1(\Gamma;\C_{\alpha_i})$
the representation $\rho^{(n)}_D\co\Gamma\to \SLn$ given by
\eqref{eq:SolvableRep} is a scheme smooth representation of $R_n(\Gamma)$.

More precisely, $\rho^{(n)}_D$ is contained in a unique irreducible component $R^{(n)}_D \subset R_n(\Gamma)$ of dimension $n^2+n-2$ and the component $R^{(n)}_D$ contains irreducible representations.
\end{thm}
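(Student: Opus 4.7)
The plan is to establish the three conclusions of the theorem---scheme smoothness, the dimension and uniqueness of $R^{(n)}_D$, and the existence of irreducible representations inside it---in three stages that mirror the division of the section into subsections 3.2 and 3.3.

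The first stage is the cohomological input. Since $\rho^{(n)}_D$ takes values in the Borel subgroup of upper triangular matrices, the decreasing filtration $V_k = \{X \in \sln : X_{ij} = 0 \text{ for } j - i < k\}$ is $\Ad\rho^{(n)}_D$-invariant, with associated graded pieces $\mathrm{gr}_0 V \cong \C^{n-1}$ (trivial $\Gamma$-module) and $\mathrm{gr}_k V \cong \bigoplus_{j-i=k} \C_{\lambda_i/\lambda_j}$ for $k \neq 0$. By Proposition~\ref{prop:H1GammaC} together with the hypotheses on $\Delta_K$, every piece with $|k|\geq 2$ has vanishing cohomology, and the whole computation reduces to $\mathrm{gr}_{\pm 1} V$ and $\mathrm{gr}_0 V$. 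Running through the induced long exact sequences, the connecting homomorphisms are controlled by cup-products involving $u_i^+$ and $h$, and by Lemma~\ref{lem:cupproduct} these are isomorphisms in the relevant positions. The resulting cancellations give $H^0(\Gamma; \Ad\rho^{(n)}_D) = 0$ (the $(n-1)$-dimensional $H^0$ carried by $\mathrm{gr}_0 V$ is killed by the connecting map to $H^1(\Gamma; \mathrm{gr}_1 V)$, which identifies with the cup products with the $u_i^+$) and $\dim H^1(\Gamma; \Ad\rho^{(n)}_D) = n-1$. Hence $\dim Z^1(\Gamma; \Ad\rho^{(n)}_D) = \dim B^1 + \dim H^1 = (n^2 - 1) + (n-1) = n^2 + n - 2$.

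The second stage is scheme smoothness. I would invoke the criterion of \cite{Heusener-Medjerab2014}, which provides sufficient cohomological conditions under which a representation of this filtered form is scheme smooth. In our setting, the criterion follows from the cohomological analysis of the first stage: the obstructions to integrating tangent cocycles lie in $H^2(\Gamma; \Ad\rho^{(n)}_D)$ and are captured, via the filtration $V_k$, by the cup-product pairings controlled by Lemma~\ref{lem:cupproduct}. Scheme smoothness of $\rho^{(n)}_D$ combined with $\dim T_{\rho^{(n)}_D}^{Zar} R_n(\Gamma) = \dim Z^1(\Gamma; \Ad\rho^{(n)}_D) = n^2 + n - 2$ forces $\rho^{(n)}_D$ to lie on a unique irreducible component $R^{(n)}_D$ of the stated dimension. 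For the third stage, to show $R^{(n)}_D$ contains irreducible representations, I would produce explicit first-order deformations of $\rho^{(n)}_D$ along classes in $H^1(\Gamma; \C_{\alpha_i^{-1}})$, i.e.\ in the $\mathrm{gr}_{-1} V$ direction, which by scheme smoothness integrate to analytic families $\rho_t$ in $R^{(n)}_D$. For $t\neq 0$ small, the image of $\rho_t$ acquires non-trivial subdiagonal entries, and a direct check shows that the $\C$-subalgebra generated by $\rho_t(\Gamma)$ fills all of $M_n(\C)$. Burnside's matrix lemma then forces $\rho_t$ to be irreducible, and since irreducibility is a Zariski-open condition in $R_n(\Gamma)$, this yields a non-empty Zariski-open subset of irreducibles in $R^{(n)}_D$.

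The hard part will be the first stage: the careful filtration bookkeeping, in particular the identification of the successive connecting homomorphisms as cup products with $u_i^+$ or $h$, so that Lemma~\ref{lem:cupproduct} can be applied to force both the vanishing of $H^0$ and the sharp count $\dim H^1 = n-1$. A secondary difficulty is the verification of the scheme-smoothness criterion of \cite{Heusener-Medjerab2014}: one must check that the relevant obstructions genuinely vanish and not merely have the correct dimension bounds. Once these ingredients are in place, the dimension statement and the Burnside argument for irreducibility follow in a more formal manner.
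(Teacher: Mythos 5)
Your proposal follows the paper's own proof in all essentials: the filtered cohomology computation, the invocation of the smoothness criterion (the paper's Proposition~\ref{prop:smoothpoint}), and the Burnside argument for irreducibility. The only notable variant is that you filter $\sln$ directly by $V_k$ rather than filtering $\gln$ by the $C_k$ of \eqref{eq:Ck} and then splitting off the trace via $\gln\cong\sln\oplus\C I_n$; this is slightly cleaner ($\mathrm{gr}_0 V\cong\C^{n-1}$ right away, and the connecting maps restricted to the traceless diagonal are already isomorphisms rather than rank-$(n-1)$ maps out of $\C^n$), but the bookkeeping is identical. Two small imprecisions worth flagging: the connecting map $\beta^0$ killing $H^0(\Gamma;\mathrm{gr}_0 V)$ is not a cup product with the $u_i^+$ but simply the coboundary $\delta(E_i^i)\equiv u_{i-1}^+E_{i-1}^i-u_i^+E_i^{i+1}$ read modulo $V_2$; the cup-product identification via Lemma~\ref{lem:cupproduct} only enters at the $\beta^1$ level. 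And the criterion from \cite{Heusener-Medjerab2014} is not an obstruction-vanishing statement but a dimension argument (half-lives-half-dies for the torus boundary), so once you have $\dim H^1=n-1$ no separate verification of second-order obstructions is needed. Neither affects the soundness of the plan.
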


\begin{remark}\label{rem:components}
\begin{enumerate}
    \item 
The component $R^{(n)}_D \subset R_n(\Gamma)$ depends only on $\lambda_1,\ldots,\lambda_n$
 and not on the cocycles $u_i^+$.
This follows since the $u^+_i$ are non-principal and
$H^1(\Gamma;\C_{\alpha_i})\cong \C$. Adding a coboundary $b(\gamma) = (\alpha_i^{h(\gamma)}-1)a$ to $u_i^+$ and multiplying $u_i^+$ by a non-zero complex number $c\in\C^*$ can be realized by conjugation:
\[
\begin{pmatrix}
1 & a\\
0 &1
\end{pmatrix}
\begin{pmatrix}
1 & u_i^+(\gamma)\\
0 &1
\end{pmatrix}
\begin{pmatrix}
\lambda_i^{h(\gamma)} & 0\\
0 & \lambda_{i+1}^{h(\gamma)}
\end{pmatrix}
\begin{pmatrix}
1 & -a\\
0 &1
\end{pmatrix}
= 
\begin{pmatrix}
1 & u_i^+(\gamma)+ (\alpha_i^{h(\gamma)}  - 1)a\\
0 &1
\end{pmatrix}
\begin{pmatrix}
\lambda_i^{h(\gamma)} & 0\\
0 & \lambda_{i+1}^{h(\gamma)}
\end{pmatrix}
\]
and
\[
\begin{pmatrix}
c^{1/2} & 0\\
0 &c^{-1/2}
\end{pmatrix}
\begin{pmatrix}
1 & u_i^+(\gamma)\\
0 &1
\end{pmatrix}
\begin{pmatrix}
\lambda_i & 0\\
0 & \lambda_{i+1}
\end{pmatrix}
\begin{pmatrix}
c^{-1/2} & 0\\
0 &c^{1/2}
\end{pmatrix}
=
\begin{pmatrix}
1 & c\,u_i^+(\gamma)\\
0 &1
\end{pmatrix}
\begin{pmatrix}
\lambda_i & 0\\
0 & \lambda_{i+1}
\end{pmatrix}\,.
\]
\item By Lemma~\ref{lem:upper-triang-exist},
  the diagonal representation $\rho_D$ is contained in the closure of the orbit of $\rho^{(n)}_D$, thus $\rho_D$ is contained in the irreducible component $R^{(n)}_D$. On the other hand, as an abelian representation, $\rho_D$ is contained in the  irreducible component $R_n(\Z)$. Consequently, $\rho_D$ is not a smooth point of $R_n(\Gamma)$.
\end{enumerate}
\end{remark}

For the proof of Theorem~\ref{thm:upper-triang-deform} we will make use of the following result
(see \cite{BenAbdelghani-Heusener-Jebali2010,Heusener-Porti-Suarez2001,Heusener-Porti2005,Heusener-Medjerab2014, Heusener2016}).

\begin{prop}\label{prop:smoothpoint}

Let $M$ be a connected and orientable $3$-manifold with  toroidal boundary $\partial M \cong S^1\times S^1$ and let
$\rho\co\pi_1(M)\to \SLn$ be a representation.

If
$\dim H^1(\pi_1(M); \Ad\rho) =n-1$ then $\rho$   is a smooth point of the
$\SLn$-representation variety $R_n(\pi_1(M))$. More precisely,
$\rho$ is contained in a unique component of dimension
$n^2+n-2- \dim H^0(\pi_1(M);\Ad\rho)$.
\end{prop}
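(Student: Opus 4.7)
The plan is to bracket the local dimension $\dim_\rho R_n(\pi_1(M))$ between two coinciding bounds and thereby conclude scheme-smoothness.

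For the upper bound, I invoke Weil's identification $T_\rho R_n(\pi_1(M)) \cong Z^1(\pi_1(M); \sln_{\Ad\rho})$ from Section~\ref{sec:rep-scheme}. The short exact sequence $0 \to B^1 \to Z^1 \to H^1 \to 0$ together with the standard identification $\dim B^1(\pi_1(M); \sln_{\Ad\rho}) = \dim \sln - \dim H^0(\pi_1(M); \sln_{\Ad\rho}) = (n^2 - 1) - \dim H^0$ gives, under the hypothesis $\dim H^1 = n-1$, the formula
\[
\dim T_\rho R_n(\pi_1(M)) \;=\; \dim Z^1 \;=\; (n^2 - 1) - \dim H^0 + (n-1) \;=\; n^2 + n - 2 - \dim H^0.
\]

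For the lower bound, I rely on the general estimate
\[
\dim_\rho R_n(\pi_1(M)) \;\geq\; n^2 + n - 2 - \dim H^0(\pi_1(M); \sln_{\Ad\rho}),
\]
valid at any representation of the fundamental group of a compact orientable 3-manifold with toroidal boundary. This is the $\SLn$-analogue of Thurston's classical $\SL(2,\C)$ inequality; it is established by Poincar\'e--Lefschetz duality applied to the pair $(M,\partial M)$ with coefficients in the self-dual $\pi_1(M)$-module $\sln$ (the Killing form provides $\sln \cong \sln^{*}$), combined with the long exact sequence of the pair and the vanishing $\chi(M) = 0$. The toroidal boundary enters through $\dim H^{1}(\partial M; \sln) = 2\dim H^{0}(\partial M; \sln)$, and the ``half-lives-half-dies'' principle makes the image of $H^{1}(M; \sln) \to H^{1}(\partial M; \sln)$ a Lagrangian with respect to the intersection pairing. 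Rather than reprove this, I would cite the version established in \cite{Heusener-Medjerab2014} (see also \cite{Heusener-Porti-Suarez2001, Heusener-Porti2005}).

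Combining the two bounds, $\dim_\rho R_n(\pi_1(M)) = \dim T_\rho R_n(\pi_1(M)) = n^2 + n - 2 - \dim H^0$, so $\rho$ is scheme-smooth. Consequently the local ring $\mathcal{O}_\rho(R_n(\pi_1(M)))$ is regular, which forces $\rho$ to lie on a unique irreducible component, of the stated dimension.

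The main obstacle is not the sandwich argument itself, which is formal once both bounds are known, but the lower bound: it is the only non-trivial geometric input and is specific to 3-manifolds with toroidal boundary. Provided I am willing to cite it from the prior literature, the remaining verification is a direct dimension count and a standard application of the regularity criterion.
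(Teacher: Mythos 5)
Your sandwich argument is indeed the standard route to this proposition. The paper itself does not reprove it but cites \cite{BenAbdelghani-Heusener-Jebali2010,Heusener-Porti-Suarez2001,Heusener-Porti2005,Heusener-Medjerab2014, Heusener2016}, and the key lemma you invoke --- the Thurston-type lower bound $\dim_\rho R_n(\pi_1 M) \geq n^2+n-2-\dim H^0(\pi_1 M;\sln_{\Ad\rho})$ for a compact orientable $3$-manifold with a single torus boundary component --- is exactly what those references establish (for $\SLn[2]$ in \cite{Heusener-Porti-Suarez2001} and in the $\SLn$ setting in \cite{Heusener-Medjerab2014}). Your upper bound via Weil and the identity $\dim B^1 = (n^2-1)-\dim H^0$ is correct, and once both bounds coincide one gets $\dim_\rho R_n = \dim T_\rho R_n$, which forces $\rho$ to be a reduced smooth point with regular local ring, hence lying on a unique component of that dimension.

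One small caveat worth keeping in mind: the lower bound is not a purely cohomological consequence of Poincar\'e--Lefschetz duality and the long exact sequence --- those give only numerical relations among the $\dim H^i$, which by themselves yield the weaker bound $\dim Z^1 - \dim H^2 = n^2-1$. The sharper estimate you want requires a deformation-theoretic or presentation-theoretic input (roughly, that the obstruction classes are constrained by their restriction to $\partial M$, and half-lives-half-dies controls that restriction), which is precisely the content of the cited lemma. Since you explicitly defer to the literature for this step rather than claiming it follows formally from duality, the proposal is sound; just be aware that the duality/Lagrangian remarks sketch the ingredients rather than constituting a proof of the lower bound.
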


The knot complement $X$ has a single torus boundary component. To prove the first part of Theorem~\ref{thm:upper-triang-deform}, we will prove that
 $H^0(\Gamma;\Ad\rho^{(n)}_D)=0$ and
$\dim H^1(\Gamma; {\Ad\rho^{(n)}_D})=n-1$.
This calculation will occupy the following subsection.
The proof of the existence of irreducible representations in the component $R^{(n)}_D$ is given in Subsection~\ref{sec:irred}.

\subsection{Cohomolgical calculations}

Hereafter, we will fix non-principal derivations
$u_i^{\pm}\co\Gamma\to\C_{{\alpha_i}^{\pm}}$, $1\leq i\leq n-1$, and we let
$\rho^{(n)}_D$ be a representation as given by \eqref{eq:SolvableRep}. Moreover, we will consider
 $\sln$ (respectively $\gln$) as a $\Gamma$-module via $\Ad\rho^{(n)}_D$ and we will write simply
write $\sln$ (respectively $\gln$) for for these $\Gamma$-modules if no confusion can occur.
Notice that we have the following isomorphism of $\Gamma$-modules:
$
\gln \cong \sln \oplus \C\, I_n
$
where $\Gamma$ acts trivially on the center ${\C}I_n$ of $\gln$. It follows that:
\begin{equation}\label{eq:split}
H^*(\Gamma; \gln)\cong H^*(\Gamma; \sln)\oplus H^*(\Gamma; \C)\,.
\end{equation}

To compute the cohomology groups $H^*(\Gamma ; \gln)$
we will  use an adequate filtration of the $\Gamma$-module $\gln$.

Let $(E_1,\ldots,E_n)$ denote the canonical basis of the space of  column vectors.
Hence the matrices $E_i^j := E_i \, E_j^\intercal$, $1\leq i, j \leq n$, form the canonical basis of $\gln$.
We let $p_i^j\co \gln\to\C$ denote the corresponding linear coordinate functions.
For a given family $(X_i)_{i\in I}$, $X_i\in \gln$, we let $\langle X_i \,|\, i\in I \rangle\subset \gln$
denote the subspace of $\gln$ generated by the family.
For  a given $k\in\Z$, $1-n\leq k\leq n$, we define:
\begin{equation}\label{eq:Ck}
C_k= \langle E_i^j\in\mathfrak{gl}(n,\C)\ |\ j-i\geq k\rangle\,.
\end{equation}

For all $\gamma\in\Gamma$  we have:
\begin{equation}\label{eq:action}
\gamma\cdot E_i^j = (\rho^{(n)}_D(\gamma) E_i ) (E_j^\intercal \rho^{(n)}_D(\gamma^{-1}) )\,.
\end{equation}
More precisely, we obtain for $1\leq i,j\leq n$,
\[
\rho^{(n)}_D(\gamma) E_i =  
    \lambda_i^{h(\gamma)} E_i + \lambda_i^{h(\gamma)}u_{i-1}^+(\gamma) E_{i-1} +  \cdots 
\]
and
\begin{align*}
E_j^\intercal \rho^{(n)}_D(\gamma^{-1}) &=  
   \lambda_j^{-h(\gamma)} E_j^\intercal  + \lambda_{j+1}^{-h(\gamma)} u_j^+(\gamma^{-1}) E_{j+1}^\intercal+ \cdots \\
   &=    \lambda_j^{-h(\gamma)} E_j^\intercal  - \lambda_{j+1}^{-h(\gamma)} \big(\frac{\lambda_{j}}{\lambda_{j+1}}\big)^{-h(\gamma)} u_j^+(\gamma) E_{j+1}^\intercal+ \cdots\;.
\end{align*}
Here we have used the fact that, as $u^+_j$ is a derivation, $u_j^+(\gamma^{-1}) = - (\lambda_j/\lambda_{j+1})^{-h(\gamma)} u_j^+(\gamma)$.

Hence for $E_i^j \in C_k$ we have:
\begin{equation}\label{eq:action+}
\gamma\cdot E_i^j =  \big(\frac{\lambda_i}{\lambda_j}\big)^{h(\gamma)} E_i^j 
      + \big(\frac{\lambda_i}{\lambda_j}\big)^{h(\gamma)} u^+_{i-1}(\gamma)\,E_{i-1}^j 
      - \big(\frac{\lambda_i}{\lambda_{j}}\big)^{h(\gamma)} u_j^+(\gamma)\,E_i^{j+1} 
      +  c_{k+2}
\end{equation}
for a certain element $c_{k+2}\in C_{k+2}$.

It follows from equation~\eqref{eq:action+} that $C_k$ is a $\Gamma$-module.
Hence we obtain the descending sequence of $\Gamma$-modules:
$$\mathfrak{gl}(n,\C)=C_{1-n}\supset C_{2-n}\ldots\supset C_0\supset\ldots\supset C_{n-1}\supset C_n=\{0\}.$$

Notice that $C_0$ is the Borel subalgebra and that the coordinate function  
$p_1^n\co C_{n-1}=\left\langle E_1^n \right\rangle\to\C_{\lambda_1/\lambda_n}$
induces an isomorphism.

A basis of the vector space $C_k\big/C_{k+1}$ is given by:
\[
\begin{cases}
(E_1^{k+1} + C_{k+1}, E_2^{k+2}+C_{k+1} ,\ldots, E^n_{n-k}+C_{k+1}) & \text{if $0\leq k\leq n-1$,}\\
(E^1_{|k|+1} + C_{k+1}, E^2_{|k|+2} + C_{k+1}, \ldots, E_n^{n-|k|}+C_{k+1}) &\text{if $1-n\leq k\leq 0$.}
\end{cases}
\]
Hence we have the following isomorphisms which are induced by the projections:
\begin{align*}
p_1^{k+1} \oplus p_2^{k+2} \oplus\cdots\oplus p^n_{n-k}\co&
C_k\big/C_{k+1}\xrightarrow{\cong} \oplus_{i=1}^{n-k} \C_{\lambda_i/\lambda_{k+i}}
&\text{ if $0\leq k\leq n-1$,}\\
p^1_{|k|+1} \oplus p^2_{|k|+2} \oplus\cdots\oplus p_n^{n-|k|}\co &
C_k\big/C_{k+1}\xrightarrow{\cong} \oplus_{i=1}^{n-|k|} \C_{\lambda_{i+|k|}/\lambda_{i}}
&\text{ if $1-n\leq k<0\,.$}
\end{align*}

As $\lambda_i/\lambda_j$, $|i-j|\geq2$, is not a root of the Alexander polynomial $\Delta_K$
 and 
$\lambda_i/\lambda_{i+1}$, $1\leq i\leq n-1$, is a simple root of $\Delta_K$, we obtain from Proposition~\ref{prop:H1GammaC}:
\begin{align}
H^0(\Gamma; C_k\big/C_{k+1}) &\cong
\begin{cases}
0 &\text{ if $k\neq 0$},\\
\C^n &\text{ if $k=0$},
\end{cases}\notag\\
H^1(\Gamma; C_k\big/C_{k+1}) &\cong
\begin{cases}
0 &\text{ if $k\neq-1,0,1$},\\
\C^{n} &\text{ if $k=0$},\\
\C^{n-1} &\text{ if $k=\pm1$},
\end{cases}\label{eq:H^*}\\
H^2(\Gamma; C_k\big/C_{k+1}) &\cong
\begin{cases}
0 &\text{ if $k\neq\pm1$},\\
\C^{n-1} &\text{ if $k= \pm1\,.$}
\end{cases}\notag
\end{align}

In what follows we will consider the following short exact sequence:
\begin{equation}\label{suite:ck}
0\to C_{k+1}\stackrel{i_k}{\rightarrowtail} C_k\stackrel{pr_k}{\twoheadrightarrow }C_k\big/C_{k+1}\to 0\,.
\end{equation}
For $|k|\leq n-1$, the sequence (\ref{suite:ck}) induces a long exact sequence in cohomology:
\begin{multline} \label{eq:seqC_k}
0\to H^0(\Gamma;C_{k+1})\to H^0(\Gamma;C_k)  \to
H^0(\Gamma;C_k/C_{k+1})\xrightarrow{\beta^0_k}\\
H^1(\Gamma;C_{k+1}) \to  H^1(\Gamma;C_k)
\to H^1(\Gamma;C_{k}/C_{k+1})\xrightarrow{\beta^1_k} \\H^2(\Gamma;C_{k+1}) \to
  H^2(\Gamma;C_k)\to   H^2(\Gamma;C_k/C_{k+1})\to \cdots
\end{multline}
where we let
$\beta_k^*\co H^*\left(\Gamma;C_k\big/C_{k+1}\right)\to H^{*+1}(\Gamma;C_{k+1})$ denote
the connecting homomorphism.

\begin{prop}\label{prop:upper-triang-cohom}
Suppose that  $\alpha_i=\lambda_i/\lambda_{i+1}$, $i=1,\ldots,n-1$, are simple roots of the Alexander polynomial $\Delta_K$, and that $\Delta_K(\lambda_i/\lambda_j)\neq0$ if $|i-j|\geq 2$.
Then for given non-principal derivations $u_i^+\in Z^1(\Gamma;\C_{\alpha_i})$,
the representation $\rho^{(n)}_D\co\Gamma\to \SLn$ given by
\eqref{eq:SolvableRep} verifies:
\[
H^0(\Gamma;\sln)=\{0\}\quad\text{ and }\quad
\dim H^1(\Gamma;\sln)=n-1\,.
\]
\end{prop}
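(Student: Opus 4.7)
\emph{Strategy.} The plan is to compute $H^*(\Gamma; \gln_{\Ad\rho^{(n)}_D})$ (written simply $\gln$ from now on) by descending induction along the filtration $\gln = C_{1-n} \supset \cdots \supset C_n = \{0\}$, and then deduce $H^*(\Gamma; \sln)$ from the $\Gamma$-module splitting $\gln \cong \sln \oplus \C\, I_n$ of~\eqref{eq:split} together with the computation $H^*(\Gamma;\C) = (\C,\C,0)$ from Proposition~\ref{prop:H1GammaC}(1). It therefore suffices to prove that $H^0(\Gamma; \gln) = \C$ and $\dim H^1(\Gamma; \gln) = n$.

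\emph{Trivial steps of the induction.} For $|k| \geq 2$, every summand of the graded piece $C_k/C_{k+1}$ has the form $\C_{\lambda_i/\lambda_{i+k}}$ with $|i - (i+k)| = |k| \geq 2$, and is therefore not a root of $\Delta_K$ by hypothesis; Proposition~\ref{prop:H1GammaC}(2) gives $H^*(\Gamma; C_k/C_{k+1}) = 0$. Starting from $C_n = 0$ and applying~\eqref{eq:seqC_k} repeatedly, this yields $H^*(\Gamma; C_k) = 0$ for all $k \geq 2$ and, in the opposite direction, $H^*(\Gamma; C_{1-n}) \cong H^*(\Gamma; C_{-1})$. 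Only the long exact sequences at $k \in \{1, 0, -1\}$ contribute. At $k = 1$, Proposition~\ref{prop:H1GammaC}(4) together with the vanishing of $H^*(\Gamma;C_2)$ immediately gives $H^*(\Gamma; C_1) \cong H^*(\Gamma; C_1/C_2) = (0,\,\C^{n-1},\,\C^{n-1})$.

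\emph{The key step $k = 0$.} Because $C_0/C_1$ is a direct sum of $n$ copies of the trivial $\Gamma$-module, Proposition~\ref{prop:H1GammaC}(1) gives $H^*(\Gamma; C_0/C_1) = (\C^n, \C^n, 0)$, and~\eqref{eq:seqC_k} reduces to
\[
0 \to H^0(\Gamma; C_0) \to \C^n \xrightarrow{\beta_0^0} \C^{n-1} \to H^1(\Gamma; C_0) \to \C^n \xrightarrow{\beta_0^1} \C^{n-1} \to H^2(\Gamma; C_0) \to 0.
\]
The heart of the proof is to show that both connecting maps are surjective with one-dimensional kernel. For $\beta_0^0$, specialising formula~\eqref{eq:action+} to $j = i$ yields $\gamma \cdot E_i^i - E_i^i \equiv u^+_{i-1}(\gamma)\,E_{i-1}^i - u^+_i(\gamma)\,E_i^{i+1} \pmod{C_2}$, with the convention $u^+_0 = u^+_n = 0$. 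Under the identification $H^1(\Gamma; C_1) \cong \bigoplus_{j=1}^{n-1} H^1(\Gamma; \C_{\alpha_j})$ with basis $\bigl\{[u^+_j]\bigr\}$, this shows that $\beta_0^0$ is represented by the $(n-1)\times n$ bidiagonal matrix with $-1$ on the diagonal and $+1$ on the superdiagonal, which has rank $n-1$ and kernel $\C \cdot (1,\ldots,1)$, corresponding to $I_n$. For $\beta_0^1$, lifting the derivation $\gamma \mapsto h(\gamma)\,E_i^i$ to a cochain in $C^1(\Gamma; C_0)$ and using the Leibniz rule~\eqref{eq:cupbil} gives
\[
\beta_0^1\!\left([h \cdot E_i^i]\right) = [u^+_{i-1} \smallcup h]\,e_{i-1} - [u^+_i \smallcup h]\,e_i \;\in\; \bigoplus_{j=1}^{n-1} H^2(\Gamma; \C_{\alpha_j}).
\]
By Lemma~\ref{lem:cupproduct} and Remark~\ref{rq:cup}, each $[u^+_j \smallcup h]$ is a non-zero generator of $H^2(\Gamma; \C_{\alpha_j}) \cong \C$, so $\beta_0^1$ is represented by the same bidiagonal matrix (up to signs) and is therefore also surjective with one-dimensional kernel. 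Reading off the exact sequence, $H^0(\Gamma; C_0) = \C$, $H^1(\Gamma; C_0) = \C$, and $H^2(\Gamma; C_0) = 0$.

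\emph{Conclusion.} At $k = -1$, the graded piece $C_{-1}/C_0 \cong \bigoplus_{i=1}^{n-1} \C_{\alpha_i^{-1}}$ has cohomology $(0, \C^{n-1}, \C^{n-1})$, since by Blanchfield duality each $\alpha_i^{-1}$ is again a simple root of $\Delta_K$. Plugging this into~\eqref{eq:seqC_k} together with $H^2(\Gamma; C_0) = 0$ forces the connecting map $H^1(\Gamma; C_{-1}/C_0) \to H^2(\Gamma; C_0)$ to vanish, and one reads off $H^0(\Gamma; C_{-1}) = \C$, $\dim H^1(\Gamma; C_{-1}) = n$ with no further connecting-map analysis required. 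The trivial induction for $k \leq -2$ then extends these values to $H^*(\Gamma; \gln) = H^*(\Gamma; C_{1-n})$, and subtracting $H^*(\Gamma;\C)$ via~\eqref{eq:split} yields the proposition. I expect the main obstacle to be the identification of $\beta_0^1$: the fact that its matrix coincides (up to sign) with that of $\beta_0^0$ relies essentially on Lemma~\ref{lem:cupproduct}, and this is where the simple-root hypothesis on the $\alpha_i$ is indispensable.
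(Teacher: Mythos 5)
Your proof is correct and follows essentially the same approach as the paper: same filtration $C_k$, same identification of the connecting homomorphisms $\beta_0^0$ and $\beta_0^1$ with the $(n-1)\times n$ bidiagonal matrix $M$ via \eqref{eq:action+} and Lemma~\ref{lem:cupproduct}, and the same final subtraction via \eqref{eq:split}. The only (inessential) difference is organizational: the paper first replaces $C_{-1}$ by $C_{-1}/C_2$ and then filters that quotient, while you work directly with $C_1 \subset C_0 \subset C_{-1}$ and invoke $H^*(\Gamma;C_2)=0$ where needed, which yields the identical computation.
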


\begin{proof}
It follows from  \eqref{eq:H^*} that $H^2(\Gamma; C_k\big/C_{k+1})=0$ for $|k|\geq2$; hence
from the long exact sequence \eqref{eq:seqC_k} we obtain isomorphisms
$H^*(\Gamma;C_{k+1})\xrightarrow{\cong} H^*(\Gamma;C_k)$ for $|k|\geq2$.

From this and \eqref{eq:H^*} we obtain:
\[
0=H^*(\Gamma;C_{n-1})\xrightarrow{\cong} H^*(\Gamma;C_{n-2}) \xrightarrow{\cong} \cdots
\xrightarrow{\cong} H^*(\Gamma;C_{2})
\]
and
\[
H^*(\Gamma;C_{-1})\xrightarrow{\cong} H^*(\Gamma;C_{-2}) \xrightarrow{\cong} \cdots
\xrightarrow{\cong} H^*(\Gamma;C_{1-n}) =
H^*(\Gamma;\mathfrak{gl}(n,\C)) \,.
\]
Now, $H^*(\Gamma;C_2)=0$ and $C_2\subset C_{-1}$ imply that:
\[
H^*(\Gamma;C_{-1}) \xrightarrow{\cong} H^*(\Gamma;C_{-1}/C_2)\,.
\]
Therefore it will be sufficient to compute the dimension of $H^*(\Gamma;C_{-1}/C_2)$.

We have the following inclusions:
\[
C_{-1}\supset C_0\supset C_1 \supset C_2 \quad\Rightarrow\quad
C_{-1}/C_2 \supset C_0/C_2\supset C_1/C_2\,.
\]
Thus we obtain two exact sequences of $\Gamma$-modules:
\begin{align}
0\to C_1/C_2 \to C_0/C_2 \to C_0/C_1\to 0\,, \label{eq:SES1}\\
0\to C_0/C_2 \to C_{-1}/C_2 \to C_{-1}/C_0\to 0\,. \label{eq:SES2}
\end{align}
The short exact sequence \eqref{eq:SES1}
induces the following long sequence in cohomology:
\begin{multline*}
0\to H^0(\Gamma;C_0/C_2)  \to
H^0(\Gamma;C_0/C_1)\xrightarrow{\beta^0}
H^1(\Gamma;C_1/C_2)
\to  H^1(\Gamma;C_0/C_2)\\
\to H^1(\Gamma;C_0/C_1)\xrightarrow{\beta^1} H^2(\Gamma;C_1/C_2)
\to  H^2(\Gamma;C_0/C_2)\to 0
\end{multline*}
since by equation~\eqref{eq:H^*}, $H^0(\Gamma;C_1/C_2)=0$ and
$H^2(\Gamma;C_0/C_1)=0$.

In order to determine the connection operators
 $\beta^*$, $*=0,1$, we fix a basis $E_i^i +C_1$, $i=1,\ldots,n$,
 of  $C_0/C_1$. Then $\{E_i^i +C_1\}_{1\leq i\leq n}$ is a basis of $H^0(\Gamma;C_0/C_1)$,
 and the cocycles $U_j\co\Gamma\to C_1/C_2$, $j=1,\ldots,n-1$, given by
 $U_j(\gamma)= u_{j}^+(\gamma)E_{j}^{j+1} +C_2$ represent a basis of
$H^1(\Gamma;C_1/C_2)$.

Now we let $\delta^*$ denote the coboundary operator of the complex
$C^*(\Gamma; \gln)$. Then equation~\eqref{eq:action+} gives directly:
\[
\delta^0(E_i^i)(\gamma)= \gamma\cdot E_i^i - E_i^i = u_{i-1}^+(\gamma)\, E^{i}_{i-1} - u_{i}^+(\gamma)\, E^{i+1}_{i} + c_2
\]
for a certain element $c_2\in C_2$. Therefore, $\beta^0(E_i^i +C_1) = U_{i-1} - U_i +C_2$
where $U_0$ and $U_n$ are understood to be zero. Hence the matrix of $\beta^0$ with respect to the basis $\{E_i^i +C_2\}_{1\leq i\leq n}$ and $\{U_j\}_{1\leq j\leq n-1}$ of $H^0(\Gamma;C_0/C_1)$ and
$H^1(\Gamma;C_1/C_2)$ respectively is the $(n-1)\times n$ matrix:
\begin{equation}\label{eq:matrixM}
M= \begin{pmatrix}
-1 & 1 & 0 & \dots & 0\\
0 &  -1  & 1 & 0 & \vdots \\
\vdots & \ddots & \ddots & \ddots & 0 \\
0 &       \dots    & 0       & -1 & 1
\end{pmatrix}
\end{equation}
of rank $n-1$. Hence $\beta^0$ is surjective.

Similarly, $\{h\,E_i^i +C_2\}_{1\leq i\leq n}$ represents a basis of $H^1(\Gamma;C_0/C_1)$, and
\[
\delta^1(h\,E_i^i) (\gamma_1,\gamma_2)= (u_{i-1}^+\smallcup h) (\gamma_1,\gamma_2)\,E_{i-1}^i 
- (u_i^+\smallcup h)(\gamma_1,\gamma_2)\, E_i^{i+1} + c_2\,,
\]
where $u_0^+$ and $u_n^+$ are understood to be zero. By Lemma~\ref{lem:cupproduct} and Remark~\ref{rq:cup}, the 
2-cocycles $u_i^+ \smallcup h\in Z^2(\Gamma ; \C_{\alpha_i})$ represent generators of $H^2(\Gamma;\C_{\alpha_i})$.
Hence the matrix of $\beta^1$ with respect to the bases $\{h\,E_i^i +C_2\}_{1\leq i\leq n}$ and $\{(u_i^+\smallcup h)\, E_i^{i+1} +C_2\}_{1\leq i\leq n-1}$ of $H^1(\Gamma;C_0/C_1)$ and
$H^2(\Gamma;C_1/C_2)$ respectively is also the $(n-1)\times n$ matrix $M$ of equation \eqref{eq:matrixM}.
Hence $\beta^1$ is also surjective.

As a consequence $H^2(\Gamma;C_0/C_2)=0$
and we obtain the two short exact sequences:
\[
0\to H^0(\Gamma;C_0/C_2)\to H^0(\Gamma;C_0/C_1)\to
H^1(\Gamma;C_1/C_2)\to0\,,
\]
and
\[
0\to H^1(\Gamma;C_0/C_2)\to H^1(\Gamma;C_0/C_1)\to
H^2(\Gamma;C_1/C_2)\to0\,.
\]
We deduce from equation~\eqref{eq:H^*} that $\dim H^0(\Gamma;C_0/C_2)=\dim H^1(\Gamma;C_0/C_2)=1$.

Observe that $H^1(\Gamma;C_0/C_2)$ is generated by $\langle h\, I_n +C_2\rangle$, where $I_n$ is the identity matrix of order $n$.

Now consider the short exact sequence \eqref{eq:SES2}
which induces the following sequence in cohomology:
\begin{multline*}
0\to H^0(\Gamma;C_0/C_2)\to H^0(\Gamma;C_{-1}/C_2)\to H^0(\Gamma;C_{-1}/C_0)\to
H^1(\Gamma;C_0/C_2)\\
\to  H^1(\Gamma;C_{-1}/C_2)
\to H^1(\Gamma;C_{-1}/C_0)\to H^2(\Gamma;C_0/C_2) \\
\to  H^2(\Gamma;C_{-1}/C_2)\to H^2(\Gamma;C_{-1}/C_0)\to
 \cdots\,.
\end{multline*}
As $H^0(\Gamma;C_{-1}/C_0)=0$ and $H^2(\Gamma;C_0/C_2)=0$ we obtain the isomorphism:
\[
 H^0(\Gamma;C_0/C_2)\xrightarrow{\cong}H^0(\Gamma;C_{-1}/C_2)
\] 
and the short exact sequence
\begin{equation}\label{eq:traceless}
0\to H^1(\Gamma;C_0/C_2)
\to  H^1(\Gamma;C_{-1}/C_2)
\to H^1(\Gamma;C_{-1}/C_0)\to0\,.
\end{equation}
In conclusion
\[
\begin{cases}
\dim H^0(\Gamma;\gln)=\dim H^0(\Gamma;C_{-1}/C_2)=\dim  H^0(\Gamma;C_0/C_2) = 1\,,\\
\dim H^1(\Gamma;\gln)=\dim H^1(\Gamma;C_{-1}/C_2)=1 + (n-1) = n\,.
\end{cases}
\]
Now, equation \eqref{eq:split} gives $\dim H^0(\Gamma;\sln)=0$, and
$\dim H^1(\Gamma;\sln)=n-1$. 

\end{proof}

\begin{remark}\label{rem:trace-cohom}
The cohomology groups $ H^0(\Gamma;C_0/C_2)$ and $H^1(\Gamma;C_0/C_2)$ are generated by
$I_n +C_2$ and $h\,I_n+C_2$ respectively where $I_n$ denotes the identity matrix. This follows since $\Ker\beta^0$ and
$\Ker\beta^1$ are generated by $I_n +C_1$ and $h\,I_n+C_1$ respectively.
\end{remark}

We call a cochain $c\co\Gamma\to\gln$ \emph{traceless} if $\tr(c(\gamma))=0$ for all $\gamma\in\Gamma$. Notice that a cochain is traceless if and only if it takes values in $\sln\subset\gln$. Every principal derivation $b\co\Gamma\to\gln$ is traceless, and hence we can define traceless cohomology classes.

It is clear that each traceless cocycle in $Z^1(\Gamma;C_{-1})$ represents a cohomology class in
$H^1(\Gamma;\sln)$. Reciprocally, we have the following:
\begin{cor}\label{cor:H^1sln}
Each element of $H^1(\Gamma;\sln)$ can be represented by a traceless cocycle
in $Z^1(\Gamma;C_{-1})$. Moreover, two traceless cocycles in $Z^1(\Gamma;C_{-1})$ represent the same cohomology class in $H^1(\Gamma;\sln)$ if and only if their projections represent the same cohomology class in $H^1(\Gamma;C_{-1}/C_0)$. In other words, the composition map:
\[
H^1(\Gamma;\sln)\hookrightarrow H^1(\Gamma;\gln) \cong H^1(\Gamma;C_{-1})\to H^1(\Gamma;C_{-1}/C_0)
\]
is an isomorphism.
\end{cor}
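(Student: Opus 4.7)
The plan has two parts corresponding to the two sentences in the statement. First I will show every class in $H^1(\Gamma;\sln)$ admits a traceless representative in $Z^1(\Gamma;C_{-1})$; this is a consequence of the isomorphism $H^1(\Gamma;\gln) \cong H^1(\Gamma;C_{-1})$ already established in the proof of Proposition~\ref{prop:upper-triang-cohom}, combined with the observation that coboundaries are automatically traceless. Then I will verify the displayed composition is an isomorphism by a dimension count together with a short trace argument for injectivity. The main obstacle I anticipate is identifying the image of $H^1(\Gamma;C_0)$ in $H^1(\Gamma;C_{-1})$ as the single line spanned by $[h\,I_n]$; this requires tracking through the filtration computations already carried out in the proof of Proposition~\ref{prop:upper-triang-cohom}.

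For the representability claim, given $[z] \in H^1(\Gamma;\sln)$ I view it as a class in $H^1(\Gamma;\gln)$ via the $\Gamma$-module splitting $\gln = \sln \oplus \C\,I_n$, and, using $H^1(\Gamma;\gln) \cong H^1(\Gamma;C_{-1})$, choose a representative $z' \in Z^1(\Gamma;C_{-1})$ with $z' = z + \delta b$ for some $b \in \gln$. Decomposing $b = b_0 + c\,I_n$ with $b_0 \in \sln$ and $c \in \C$, the scalar piece satisfies $\delta(c\,I_n) = 0$ since $I_n$ is $\Gamma$-invariant under $\Ad\,\rho^{(n)}_D$. Hence $z' = z + \delta b_0$ with $b_0 \in \sln$, so $z'$ is traceless and represents the same class in $H^1(\Gamma;\sln)$.

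To show the composition is an isomorphism, I first check dimensions: from Proposition~\ref{prop:upper-triang-cohom}, $\dim H^1(\Gamma;\sln) = n-1$, while $C_{-1}/C_0 \cong \bigoplus_{i=1}^{n-1} \C_{\lambda_{i+1}/\lambda_i}$ yields $\dim H^1(\Gamma;C_{-1}/C_0) = n-1$ by Proposition~\ref{prop:H1GammaC}(4), since each $\lambda_{i+1}/\lambda_i$ is a simple root of $\Delta_K$. Thus it suffices to establish injectivity. Suppose $[z] \in H^1(\Gamma;\sln)$ has trivial image in $H^1(\Gamma;C_{-1}/C_0)$, and pick a traceless representative $z' \in Z^1(\Gamma;C_{-1})$. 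The long exact sequence of $0 \to C_0 \to C_{-1} \to C_{-1}/C_0 \to 0$ identifies the kernel of $H^1(\Gamma;C_{-1}) \to H^1(\Gamma;C_{-1}/C_0)$ with the image of $H^1(\Gamma;C_0)$, which by the filtration calculations in the proof of Proposition~\ref{prop:upper-triang-cohom} and Remark~\ref{rem:trace-cohom} is the one-dimensional subspace spanned by $[h\,I_n]$. Thus $z' = c\,h\,I_n + \delta b$ for some $c \in \C$ and $b \in C_{-1}$. Taking traces gives $0 = \tr(z'(\gamma)) = c\,n\,h(\gamma)$ for all $\gamma$, forcing $c = 0$. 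Then $z' = \delta b$, and the scalar-decomposition argument from the representability step upgrades this to a coboundary in $\sln$, giving $[z] = 0$.
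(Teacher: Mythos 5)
Your proof is correct and takes essentially the same approach as the paper: both identify the kernel of the map to $H^1(\Gamma;C_{-1}/C_0)$ as spanned by the non-traceless class $[h\,I_n]$ (via the filtration computations of Proposition~\ref{prop:upper-triang-cohom} and Remark~\ref{rem:trace-cohom}) and then apply a trace argument, together with the dimension count $\dim H^1(\Gamma;\sln)=n-1=\dim H^1(\Gamma;C_{-1}/C_0)$. The only cosmetic difference is that the paper passes through $C_{-1}/C_2$ using the already-established sequence~\eqref{eq:traceless}, while you work with $C_{-1}$ directly; these are interchangeable since $H^*(\Gamma;C_2)=0$.
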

\begin{proof}
The first part follows easily since a principal derivation with values in $\gln$ is traceless, and since
$H^1(\Gamma;C_{-1})\xrightarrow{\cong} H^1(\Gamma;\gln)$ is an isomorphism.
More precisely, traceless cohomology classes in $H^1(\Gamma;C_{-1})$ correspond exactly to
cohomology classes in $H^1(\Gamma;\sln)\subset H^1(\Gamma;\gln)$.
Also, each cochain $c\co\Gamma\to C_2$ is traceless, and therefore traceless cohomology classes in
$H^1(\Gamma;C_{-1}/C_2)$ are defined.
Next, notice that a traceless cohomology class in $H^1(\Gamma;C_{-1})$ projects onto a traceless cohomology class in $H^*(\Gamma;C_{-1}/C_2)$. We consider the sequence \eqref{eq:traceless} 
\[
0\to H^1(\Gamma;C_0/C_2)
\to  H^1(\Gamma;C_{-1}/C_2)
\to H^1(\Gamma;C_{-1}/C_0)\to0\,.
\]
If a cohomology class in
$H^1(\Gamma;C_{-1}/C_2)$ projects onto the trivial class in $H^1(\Gamma;C_{-1}/C_0)$ then it is in the image of
$H^1(\Gamma;C_{0}/C_2)\to H^1(\Gamma;C_{-1}/C_2)$. By Remark~\ref{rem:trace-cohom}
$\operatorname{Im}\big(H^1(\Gamma;C_{0}/C_2)\to H^1(\Gamma;C_{-1}/C_2)\big)$ is generated by $h\,I_n+C_2$. It follows that the class is trivial or non-traceless.
We obtain the following isomorphism:
\[
H^1(\Gamma;\sln) = H_0^1(\Gamma;\gln)
 \cong H_0^1(\Gamma;C_{-1})
\xrightarrow{\cong} H_0^1(\Gamma;C_{-1}/C_2) \xrightarrow{\cong} H^1(\Gamma;C_{-1}/C_0) \,.
\]
Here $H^1_0(.)$ denotes the traceless cohomology classes.
\end{proof}

Propositions~\ref{prop:smoothpoint} and \ref{prop:upper-triang-cohom} prove the first part of Theorem~\ref{thm:upper-triang-deform}. The last part of Theorem~\ref{thm:upper-triang-deform} will be proved in the next subsection.

\subsection{Irreducible $\mathrm{SL}(n,\C)$ representations}\label{sec:irred}

\begin{proof}[Proof of the last part of Theorem~\ref{thm:upper-triang-deform}]
In order to prove that there are irreducible deformations of $\rho^{(n)}_D$, we will make use of Burnsides matrix lemma stating that a representation $\rho\co\Gamma\to\GLn$ is irreducible if and only if
the image $\rho(\Gamma)$ generates the matrix algebra $M_n(\C)$.

Let $\Gamma=\langle S_1,\ldots,S_k\;|\;W_1,\ldots,W_{k-1}\rangle$ be a presentation of the  knot group $\Gamma$ such that $h(S_i)=1$ for all $1\leq i \leq k$. Such a presentation of deficency one exists always (see Section~2 in \cite{Heusener-Porti-Suarez2001}).

Recall that
$\rho^{(n)}_D\co\Gamma\to \SLn$ is a representation given by \eqref{eq:SolvableRep} and satisfying the conditions of Theorem~\ref{thm:upper-triang-deform}:
\[
\rho^{(n)}_D(\gamma) =
\begin{pmatrix}
1 & u_{1}^+(\gamma) & z_{13}^+(\gamma) & \dots & z_{1n}^+(\gamma) \\
0 & 1 & u_{2}^+(\gamma) &\ddots  &\vdots \\
\vdots & \ddots & \ddots & \ddots &z_{n-2,n}^+(\gamma) \\
\vdots &  &  \ddots& 1 & u_{n-1}^+(\gamma)\\
0 & \cdots & \cdots  & 0 & 1
\end{pmatrix}
D(\lambda_1,\ldots,\lambda_n)^{h(\gamma)}\,.
\]

Modulo conjugation
of the representation $\rho^{(n)}_D$, we can assume that $u_1^+(S_1)=\ldots=u_{n-1}^+(S_1)=0$ and that $z_{ij}^+(S_1)=0$
for $j-i\geq2$, $1\leq i<j\leq n$. This conjugation corresponds to
adding  principal derivations to the derivations $u_i^+$, $1\leq i\leq n-1$, and to the cochains $z_{ij}^+$;
 $j-i\geq2$. Thus we can suppose that $\rho^{(n)}_D(S_1)=D(\lambda_1,\ldots,\lambda_n)$.

Since the derivations $u_i^+$ are not principal there exists for each $i$, $1\leq i \leq n-1$, a generator $S_{j_i}$, $j_i\in\{2,...,k\}$ such that $u_i^+(S_{j_i})\neq 0$. Now, for each $i$, $1\leq i \leq n-1$, we chose a non-principal derivation $u_i^- \in Z^1(\Gamma; \C_{\lambda_{i+1}/\lambda_i})$.
 By Corollary~\ref{cor:H^1sln}, there exists a cocycle
 $V_{n-1}\in Z^1(\Gamma;\sln)$ of the form:
\[
V_{n-1}=   u_1^-E_2^1+u_2^-E_{3}^2+\ldots+u_{n-1}^-E_n^{n-1}+X,
\]
where $X\in
C^1\left(\Gamma;C_0\right)$. Up to adding  principal derivations to the derivations
 $u_i^-$ we may assume that
$u_i^-(S_1)=0,\ \forall\ 1\leq i\leq n-1$. By arguing as in the proof of Lemma 5.5 of \cite{BenAbdelghani-Heusener-Jebali2010}
it can be  proved that  $u_i^-(S_{j_i})\neq0$, $1\leq i\leq n-1$.

Let $\rho_t$ be a deformation of $\rho^{(n)}_D$ with leading term $V_{n-1}$:
\[
\rho_t = \big( I_n + t\, V_{n-1} + o(t) \big) \rho^{(n)}_D\,, \text{ where } \lim_{t\to 0} \frac{o(t)}{t} =0\,.
\]

As the set of diagonal matrices with non vanishing pairwise different entries is Zariski open, we may suppose that
for sufficiently small $t$, $\rho_t(S_1) = D(a_{1}(t),\ldots,a_{n}(t))$
is such that $a_{i}(t)\neq 0$, and $a_{i}(t)\neq a_{j}(t)$ for $i\neq j$.
Recall that the diagonal elements $\lambda_i$ are pairwise different. 
It follows that the matrix $E_{i}^i$ is a polynomial in $\rho_t(S_1)$. For if $\C[\rho_t(S_1)]$ denotes the algebra generated by $\rho_t(S_1)$ we have $\C[E_1^1,\ldots,E_n^n]\supset \C[\rho_t(S_1)]$,  $\dim \C[\rho_t(S_1)] =n$, and hence
$\C[E_1^1,\ldots,E_n^n] = \C[\rho_t(S_1)]$.

Let $\mathcal{A}_t$ denote the algebra generated by $\rho_t(\Gamma)$.
Since $\rho_t(S_1)\in \mathcal{A}_t$ we have that $E_{i}^{i}\in \mathcal{A}_t$ for all $1\leq i\leq n$.
Hence if $B=(b_{ij})\in \mathcal{A}_t$ then:
\[
 E_{i}^{i} B E_{j}^{j} = b_{ij} E_{i}^{j}\in \mathcal{A}_t\,.
\]

We consider
\[
 E_{i}^{i} \, \rho_t(S_{j_i}) \, E_{i+1}^{i+1} = \big(\lambda_{i+1} u_i^+(S_{j_i}) + o(1)\big) E_{i}^{i+1}\,.
\]
We have $\lambda_{i+1} u_i^+(S_{j_i})\neq 0$, thus for $t$ sufficently small we have $E_{i}^{i+1}\in\mathcal{A}_t$.

Similarly,
\[
 E_{i+1}^{i+1} \, \rho_t(S_{j_i}) \, E_{i}^{i} = \big(t\lambda_{i} u_i^-(S_{j_i}) +o(t) \big) E_{i+1}^{i}\;
\]
We have $\lambda_{i} u_i^-(S_{j_i})\neq 0$ and hence for sufficiently small $t\neq 0$ we have
$t (\lambda_{i} u_i^-(S_{j_i}) +o(t)/t)\neq0$. Therefore $E_{i+1}^{i}\in\mathcal{A}_t$.

Finally,  Lemma~\ref{lem:alg} implies that $\mathcal{A}_t=M_n(\C)$ and
$\rho_t$ is irreducible.
\end{proof}
\begin{lemma} \label{lem:alg}
The $2(n-1)$ matrices $\{ E_{i}^j \mid |i-j|=1\}$ generate the full matrix algebra $M_n(\C)$.
\end{lemma}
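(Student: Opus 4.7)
The plan is to exploit the product rule for matrix units and generate all of $M_n(\C)$ by successive multiplications starting from the ``nearest neighbor'' generators $E_i^{i+1}$ and $E_{i+1}^i$, $1\leq i\leq n-1$.

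First I would recall the fundamental identity: since $E_i^j = E_i E_j^\intercal$, the product rule
\[
E_i^j\, E_k^l = E_i (E_j^\intercal E_k) E_l^\intercal = \delta_{jk}\, E_i^l
\]
holds. In particular $E_i^{i+1} E_{i+1}^{i+2} = E_i^{i+2}$, $E_i^{i+2} E_{i+2}^{i+3} = E_i^{i+3}$, and so on; and symmetrically in the other direction $E_{i+1}^{i} E_{i}^{i-1} = E_{i+1}^{i-1}$, etc.

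Let $\mathcal{A}\subset M_n(\C)$ denote the subalgebra generated by the $2(n-1)$ matrices $\{E_i^j \mid |i-j|=1\}$. I would prove by induction on $d=|i-j|\geq 1$ that every off-diagonal matrix unit $E_i^j$ lies in $\mathcal{A}$. The base case $d=1$ is the hypothesis. For the inductive step, if $j>i$ write $E_i^j = E_i^{j-1}\, E_{j-1}^{j}$, which lies in $\mathcal{A}$ by the induction hypothesis combined with the product rule; the case $j<i$ is symmetric. To obtain the diagonal matrix units, use $E_i^i = E_i^{i+1} E_{i+1}^i$ for $1\leq i\leq n-1$ and $E_n^n = E_n^{n-1} E_{n-1}^n$; both products lie in $\mathcal{A}$.

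Consequently every matrix unit $E_i^j$, $1\leq i,j\leq n$, belongs to $\mathcal{A}$. Since $\{E_i^j\}_{1\leq i,j\leq n}$ is a $\C$-basis of $M_n(\C)$, we conclude $\mathcal{A}=M_n(\C)$. There is no real obstacle here; the only point to watch is to include the generation of the diagonal units, which is immediate from the mixed products $E_i^{i+1}E_{i+1}^i$.
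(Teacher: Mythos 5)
Your proof is correct and takes essentially the same approach as the paper: both rely on the product rule $E_i^j E_k^l = \delta_{jk}\, E_i^l$ to build every matrix unit from the nearest-neighbor generators by iterated multiplication. The only difference is bookkeeping --- you induct on the distance $|i-j|$ and treat the diagonal units separately, while the paper inducts on the row index $i$ --- but the underlying argument is the same.
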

\begin{proof}
Let $\mathcal{A} := \C[E_{i}^j \mid |i-j|=1]$.
We will proof that all elementary matrices $E_{i}^j$,  $1\leq i\leq n$, $1\leq j\leq n$, are in  $\mathcal{A}$.

For $i=1$ a direct calculation gives:
\[
E_{1}^1 = E_{1}^2 E_{2}^{1}, \quad E_{1}^{3} = E_{1}^{2} E_{2}^{3}, \quad E_{1}^{4} = E_{1}^{3} E_{3}^{4},
\quad \ldots \quad ,
E_{1}^{n} = E_{1}^{n-1} E_{n-1}^{n}\,.
\]
Hence $E_{1}^{j}\in \mathcal{A}$ for $1\leq j\leq n$. 

Suppose now that $E_{i}^{j}\in \mathcal{A}$ for $1\leq j\leq n$ and let us prove that  $E_{i+1}^{j}\in \mathcal{A}$ for $1\leq j\leq n$.
Clearly $E_{i+1}^{i}, E_{i+1}^{i+2}, E_{i+1}^{i+1} = E_{i+1}^{i} E_{i}^{i+1} \in \mathcal{A}$. 
For $1\leq j\leq n$ and $j\notin\{i,i+1,i+2\}$, we have $E_{i+1}^{j} = E_{i+1}^{i} E_{i}^{j}\in \mathcal{A}$. 
This ends the proof.
\end{proof}

\section{The  diagonal representation}
\label{sec:diagonal}

The aim of this section is to prove that every vector in the tangent cone of $\rho_D$ is integrable (see Theorem~\ref{thm:main3}).
Throughout this section we will assume that $\lambda_i/\lambda_{i+1}$, $i=1,\ldots,n-1$, is a simple root of the Alexander polynomial $\Delta_K$, and that $\Delta_K(\lambda_i/\lambda_j)\neq0$ if $|i-j|\geq 2$.
Notice that, by the symmetry of $\Delta_K$, the quotients $\lambda_{i+1}/\lambda_{i}$
are also simple roots of the Alexander polynomial (see the paragraph preceeding Proposition~\ref{prop:H1GammaC}).

Let $D = D(\lambda_1,\ldots,\lambda_n)\in \SLn$ be the diagonal matrix with $\lambda_i\neq\lambda_j$ for
$i\neq j$ and let $\rho_D$ be the abelian representation given by $\rho_D(\gamma) = D^{h(\gamma)}$.

Recall that the quadratic cone at a representation $\rho\in R_n(\Gamma)$, introduced by W.~Goldman \cite{Goldman1984,Goldman1985}, is an obstruction to integrability:
\begin{equation}\label{eq:cone}
TQ_{\rho}\big(R_n(\Gamma)\big)=\{ U \in Z^1(\Gamma;{\Ad\rho}) \mid [U\smallcup U]\sim0\}
\subset T_{\rho} R_n(\Gamma)\cong Z^1(\Gamma;{\Ad\rho}) \,.
\end{equation}

We will calculate the quadratic cone at the abelian representation $\rho_D$. More precisely, first we will compute a basis of the Zariski tangent space $Z^1(\Gamma;{\Ad\rho_D})$ (see Subsection~\ref{sec:Tangent_space_diag}), then we will determine the ideal $I$ generated by the quadratic obstruction \eqref{eq:cone} (see Subsection~\ref{sec:quad_cone_diag}).
After that  we will show 
in Subsection~\ref{sec:ideal}
that the ideal $I$ is radical (see Lemma~\ref{lem:radical}), and that $I$  is in fact an intersection of prime ideals generated by linear forms (see Lemma~\ref{lem:prime_dec}).
This guarantees that the quadratic cone is reduced.
In Subsection~\ref{sec:cone}, we will describe the quadratic cone at $\rho_D$ in terms of the decomposition of the ideal $I$.
Finally, we will prove in Subsection~\ref{sec:tangent_cone} that every cocycle in the quadratic cone 
$TQ_{\rho_D}\big(R_n(\Gamma)\big)$ is integrable, and hence the quadratic cone coincides with the tangent cone.

\subsection{The Zariski tangent space at the diagonal representation}
\label{sec:Tangent_space_diag}

 The $\Gamma$-module $\sln$ (via $\Ad\rho_D$) decomposes as a direct sum of on-dimensional
$\Gamma$-modules. More precisely, we have

\begin{equation}\label{eq:decomp_sln}
\sln  \cong  \bigoplus_{i\neq j} \C_{\lambda_i/\lambda_j} \oplus \C^{n-1}\,.
\end{equation}

In what follows we fix the  non principal derivations:
\[
 h\in Z^1(\Gamma;\C),\quad
u_i^+\in Z^1(\Gamma;\C_{\lambda_i/\lambda_{i+1}}), \quad
u_i^-\in Z^1(\Gamma;\C_{\lambda_{i+1}/\lambda_{i}}),\quad i=1,\ldots,n-1\,.
\]
Notice that $Z^1(\Gamma;\C)= H^1(\Gamma;\C)$ since $\C$ is a trivial $\Gamma$-module.

Recall that $E_i^j= E_i \, E_j^\intercal$, $1\leq i, j \leq n$, form the canonical basis of $M_n(\C)$
and that  $E_i^j\,E_k^l = \delta_k^j\, E_i^l$. Hence
\begin{equation}\label{eq:bracked}
[E_i^j , E_k^l ] = \delta_k^j\, E_i^l - \delta_i^l\, E_k^j\,.
\end{equation}

In what follows we will consider the cohomology classes represented by the following cocycles:
\begin{equation}\label{eq:generators}
H_i = h (E_{i}^i - E_{i+1}^{i+1}),\quad
U_i^+ = u_i^+ E_{i}^{i+1}\text{ and }
U_i^- = u_i^- E_{i+1}^{i}\text{ where }\ i\in\{1,\ldots,n-1\}\,.
\end{equation}

\begin{lemma}\label{elm:basis-cohomology-abelian}
The $3n-3$ cohomology classes represented by 
$H_i$, \(U_i^+\) and \(U_i^-\), $i\in\{1,\ldots,n-1\}$,
form a basis of 
$H^1(\Gamma;{\Ad\rho_D})$.
\end{lemma}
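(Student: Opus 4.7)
The plan is to deduce the lemma from the additive decomposition of $\sln_{\Ad\rho_D}$ in equation~\eqref{eq:decomp_sln} together with Proposition~\ref{prop:H1GammaC}. Since $\rho_D$ factors through the abelian quotient, $\Ad\rho_D$ acts on each root space $\C E_i^j$ (for $i\neq j$) by the character $\gamma\mapsto (\lambda_i/\lambda_j)^{h(\gamma)}$, while it acts trivially on the Cartan subalgebra $\mathfrak{h}=\langle E_i^i - E_{i+1}^{i+1}\mid 1\leq i\leq n-1\rangle\cong\C^{n-1}$. This gives the $\Gamma$-module isomorphism \eqref{eq:decomp_sln}, and hence
\[
H^1(\Gamma;\sln_{\Ad\rho_D})\cong \bigoplus_{i\neq j} H^1(\Gamma;\C_{\lambda_i/\lambda_j})\ \oplus\ H^1(\Gamma;\C)^{n-1}\,.
\]

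First, I would carry out the dimension count. By our standing hypothesis, $\lambda_i/\lambda_j$ is a simple root of $\Delta_K$ when $|i-j|=1$ and is not a root of $\Delta_K$ when $|i-j|\geq 2$. Applying Proposition~\ref{prop:H1GammaC}(2),(4) yields $H^1(\Gamma;\C_{\lambda_i/\lambda_j})=0$ for $|i-j|\geq 2$ and $\dim H^1(\Gamma;\C_{\lambda_i/\lambda_j})=1$ for $|i-j|=1$; this contributes $2(n-1)$ to the total. Together with $\dim H^1(\Gamma;\C)=1$ from Proposition~\ref{prop:H1GammaC}(1), the Cartan summand contributes a further $n-1$. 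Hence $\dim H^1(\Gamma;\sln_{\Ad\rho_D}) = 3n-3$, matching the number of proposed generators.

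Next, I would check that the given cocycles project onto bases of the respective summands. The classes $H_i = h\,(E_i^i - E_{i+1}^{i+1})$ lie entirely in the Cartan summand; since $\{E_i^i - E_{i+1}^{i+1}\}_{i=1}^{n-1}$ is a basis of $\mathfrak{h}$ and $h$ generates $H^1(\Gamma;\C)$, the $H_i$ yield a basis of $H^1(\Gamma;\C)^{n-1}$. The class $U_i^+ = u_i^+\, E_i^{i+1}$ lives in the summand $H^1(\Gamma;\C_{\lambda_i/\lambda_{i+1}})$, which is one-dimensional and generated by any non-principal derivation; by choice $u_i^+$ is non-principal, so $U_i^+$ generates that summand. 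The argument for $U_i^-$ is identical, using that $\lambda_{i+1}/\lambda_i$ is also a simple root of $\Delta_K$ by the symmetry of the Alexander polynomial (as recalled in Section~\ref{Alexander_module}).

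Since each of the $3n-3$ proposed classes lies in a distinct summand of the direct-sum decomposition of $H^1(\Gamma;\sln_{\Ad\rho_D})$ and generates that summand, they are linearly independent; by the dimension count they form a basis. There is no real obstacle here, only the bookkeeping of identifying each cocycle with the correct summand; the only point requiring care is invoking the symmetry of $\Delta_K$ to handle the negative root characters $\lambda_{i+1}/\lambda_i$.
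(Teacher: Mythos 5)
Your proof is correct and follows exactly the route the paper implicitly relies on: the $\Gamma$-module decomposition \eqref{eq:decomp_sln} of $\sln_{\Ad\rho_D}$ into root-space characters plus the trivial Cartan summand, followed by the case-by-case application of Proposition~\ref{prop:H1GammaC} (and the symmetry of $\Delta_K$ for the negative roots). The paper states the lemma without giving a separate proof, but your argument supplies precisely the bookkeeping that justifies the stated dimension $3(n-1)$ and the identification of the generating classes.
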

\begin{proof}
Use Proposition~\ref{prop:H1GammaC} and equations \eqref{eq:generators}.
\end{proof}

Roughly speaking, this means that the cohomology
$H^1(\Gamma;{\Ad\rho_D})$ is concentrated around the diagonal, and
\begin{align}
\dim Z^1(\Gamma;{\Ad\rho_D}) &= n^2+2n-3,\notag\\
\dim B^1(\Gamma;{\Ad\rho_D}) &= n^2-n,\label{eq:dimZ1}\\
\dim H^1(\Gamma;{\Ad\rho_D}) &= 3(n-1)\notag\,.
\end{align}
As the homotopy type of the knot complement is $2$-dimensional, $H^3(\Gamma;{\Ad\rho_D})$ vanishes, and the Euler characteristic will be a multiple of that of $\partial M$ which is toroidal, and thus vanishes. Hence the alternating sum of the twisted Betti numbers also vanishes.

Using the fact that $\dim H^0(\Gamma;{\Ad\rho_D}) =(n-1)$ we obtain that:
\begin{equation}\label{eq:dim_sln}
\dim H^2(\Gamma;{\Ad\rho_D}) = 2(n-1).
\end{equation}

\begin{lemma}\label{lem:tgt-space-variety}
For the representation $\rho_D\in R_n(\Gamma)$ we have an isomorphism
\[
 T_{\rho_D} R_n(\Gamma)_\mathit{red}\xrightarrow{\cong} Z^1(\Gamma;{\Ad\rho_D})\,.
\]
\end{lemma}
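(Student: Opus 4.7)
The plan is to verify the reverse of Weil's inclusion: that every cocycle in $Z^1(\Gamma; \Ad \rho_D)$ is the tangent vector of an analytic curve lying in $R_n(\Gamma)_{\mathit{red}}$. Combined with the scheme-level isomorphism $T_{\rho_D} R_n(\Gamma) \cong Z^1(\Gamma; \Ad \rho_D)$ from Section~\ref{sec:rep-scheme} and the tautological inclusion $T_{\rho_D} R_n(\Gamma)_{\mathit{red}} \hookrightarrow T_{\rho_D} R_n(\Gamma)$, this will upgrade the inclusion to the stated isomorphism. Since the $(n^2-n)$-dimensional coboundary space $B^1(\Gamma; \Ad \rho_D)$ together with the $3(n-1)$ cohomology generators of Lemma~\ref{elm:basis-cohomology-abelian} span all of $Z^1(\Gamma; \Ad \rho_D)$, it suffices to realize each as an honest analytic tangent vector.

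Coboundaries come for free: for any $X \in \sln$ the curve $t \mapsto \exp(tX)\,\rho_D\,\exp(-tX)$ lies in the conjugation orbit of $\rho_D$, hence in $R_n(\Gamma)_{\mathit{red}}$, and has tangent $\gamma \mapsto \Ad_{\rho_D(\gamma)}(X) - X$ at $t=0$; this sweeps out $B^1(\Gamma; \Ad \rho_D)$. For the diagonal classes $H_i = h(E_i^i - E_{i+1}^{i+1})$ of \eqref{eq:generators}, the perturbed abelian families
\[
\gamma \,\longmapsto\, D(\lambda_1, \ldots, \lambda_i e^t, \lambda_{i+1} e^{-t}, \ldots, \lambda_n)^{h(\gamma)}
\]
remain inside the component $R_n(\Z) \subset R_n(\Gamma)_{\mathit{red}}$ and produce the tangent vector $H_i$ at $t = 0$.

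The off-diagonal generators $U_i^\pm = u_i^\pm\,E_i^{i \pm 1}$ will be integrated by the one-parameter families
\[
\sigma_t^{i,\pm}(\gamma) \,:=\, \bigl(I_n + t\,u_i^\pm(\gamma)\,E_i^{i \pm 1}\bigr)\,D^{h(\gamma)}.
\]
A direct check shows that $\sigma_t^{i,\pm}$ is actually a representation into $\SLn$ for every $t$: because $(E_i^{i \pm 1})^2 = 0$, the product $\sigma_t^{i,\pm}(\gamma_1)\,\sigma_t^{i,\pm}(\gamma_2)$ collapses to a linear expression in $t$, and the homomorphism identity reduces to precisely the $1$-cocycle condition satisfied by $u_i^\pm$. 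The derivative of $\sigma_t^{i,\pm}$ at $t=0$ is then $U_i^\pm$.

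Assembling all these curves produces $n^2 - n + 3(n-1) = n^2 + 2n - 3 = \dim Z^1(\Gamma; \Ad \rho_D)$ linearly independent tangent vectors in $T_{\rho_D} R_n(\Gamma)_{\mathit{red}}$, which must therefore coincide with $Z^1(\Gamma; \Ad \rho_D)$. The only step requiring genuine argument is the verification that each $\sigma_t^{i,\pm}$ is actually a homomorphism; this is where nilpotency of the off-diagonal elementary matrix and the cocycle condition on $u_i^\pm$ conspire in a single identity, and is the only potential obstacle in the proof.
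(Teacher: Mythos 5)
Your proof is correct and takes essentially the same route as the paper: you integrate the principal derivations by conjugation, the $H_i$ by perturbed diagonal families (which is the same as $\exp(tH_i(\gamma))\rho_D(\gamma)$, using commutativity of diagonal matrices), and the $U_i^\pm$ by the same paths $(I_n + t\,u_i^\pm(\gamma)E_i^{i\pm1})D^{h(\gamma)} = \exp(tU_i^\pm(\gamma))\rho_D(\gamma)$, where the nilpotency $(E_i^{i\pm1})^2=0$ together with the cocycle identity for $u_i^\pm$ verifies the homomorphism property. (One trivial slip: the tangent of $t\mapsto\exp(tX)\rho_D\exp(-tX)$ is $X-\Ad_{\rho_D(\cdot)}(X)$, the negative of what you wrote, but this doesn't affect the argument.)
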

\begin{proof}
Every principal derivation is integrable, and the derivations \eqref{eq:generators} are  seen to be integrable:
\begin{equation}\label{eq:path_metabel}
\rho_t^{(H_i)} (\gamma) = \exp\big( t H_i(\gamma) \big) \rho_D(\gamma),\quad\text{ and }\quad
\rho_t^{(U_i^\pm)} (\gamma) = \exp\big( t U_i^\pm(\gamma) \big) \rho_D(\gamma)\,.
\end{equation}
This follows since diagonal matrices commute, and $(E_i^{i+1})^2=(E_{i+1}^{i})^2=0$.
\end{proof}

\begin{remark}
The fact that the space of $1$-cocycles $Z^1(\Gamma;{\Ad\rho_D})$ is isomorphic to the Zariski tangent 
space $ T_{\rho_D} R_n(\Gamma)_\mathit{red}$ does not imply that the representation $\rho_D$ is scheme reduced. 
In order to prove this property for $\rho_D$, we will show in Theorem~\ref{thm:tangent-cone} that the tangent cone at $\rho_D$ is reduced and then apply Lemma~\ref{lem:scheme2}.
\end{remark}

\subsection{The cup product of cocycles at the diagonal representation}
\label{sec:quad_cone_diag}
In general, each cocycle $U \in Z^1(\Gamma;{\Ad\rho_D})$ has the form:
\begin{equation}\label{eq:cocycle}
U =\sum_{i=1}^{n-1} x_i U_i^+ + y_i U_i^- + z_i H_i + C
\end{equation}
where $C\in B^1(\Gamma;{\Ad\rho_D})$ is a principal derivation, and $x_i,y_i,z_i\in\C$.

\begin{lemma}\label{lem:cup_trivial}
Let $H_i$, and $U_i^\pm$ be the cocycles defined in Equation~\eqref{eq:generators}.Then
\[
[H_i\smallcup H_j]= 0,\quad [U_i^\pm\smallcup U_j^\pm]\sim 0, \text{ and } [U_i^\mp\smallcup U_j^\pm]\sim0\,.
\]
\end{lemma}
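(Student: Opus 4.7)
\medskip

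\noindent\textbf{Proof proposal.}
The plan is to reduce each cup product to an explicit cochain whose target module has vanishing $H^2$, using the hypotheses on $\Delta_K$ and part (1) of Proposition~\ref{prop:H1GammaC}. The only non-trivial ingredient beyond book-keeping is the Lie bracket formula
\[
[E_i^j , E_k^l ] = \delta_k^j\, E_i^l - \delta_i^l\, E_k^j
\]
together with the action of $\Gamma$ on the root spaces $\C_{\lambda_i/\lambda_j}$ and on the Cartan part $\C^{n-1}$ of the decomposition \eqref{eq:decomp_sln}.

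For the first identity $[H_i\smallcup H_j]=0$, the values of $H_i$ and $H_j$ lie in the (abelian) Cartan subalgebra, so the Lie bracket vanishes pointwise. In particular the cup product is identically zero as a cochain, not just cohomologically.

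For $[U_i^+\smallcup U_j^+]$, the pointwise bracket reduces to the scalar-valued cocycle
\[
u_i^+(\gamma_1)\,\bigl(\tfrac{\lambda_j}{\lambda_{j+1}}\bigr)^{h(\gamma_1)} u_j^+(\gamma_2)\,[E_i^{i+1},E_j^{j+1}].
\]
By the bracket formula this is non-zero only if $j=i+1$ (giving a multiple of $E_i^{i+2}$) or $i=j+1$ (giving a multiple of $E_{j}^{j+2}$); in either case the result is a $2$-cocycle with values in $\C_{\lambda_k/\lambda_l}$ where $|k-l|=2$. Our assumption $\Delta_K(\lambda_k/\lambda_l)\neq 0$ and Proposition~\ref{prop:H1GammaC}(2) then give $H^2(\Gamma;\C_{\lambda_k/\lambda_l})=0$, so the cocycle is a coboundary. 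The argument for $[U_i^-\smallcup U_j^-]$ is identical with $E_i^{i+1}$ replaced by $E_{i+1}^i$, producing values in $\C_{\lambda_l/\lambda_k}$ with again $|k-l|=2$.

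The mixed cup product $[U_i^+\smallcup U_j^-]$ (and symmetrically $[U_i^-\smallcup U_j^+]$) involves $[E_i^{i+1},E_{j+1}^j]=\delta_{ij}(E_i^i-E_{i+1}^{i+1})$, which vanishes unless $i=j$; when $i=j$ the resulting $2$-cocycle takes values in the trivial Cartan summand $\C\subset\sln_{\Ad\rho_D}$ (trivial because conjugation by a diagonal matrix fixes the Cartan). By Proposition~\ref{prop:H1GammaC}(1) we have $H^2(\Gamma;\C)=0$, hence the class is trivial. The main conceptual point is therefore that all possibly non-trivial Lie brackets of our basis cocycles land either in ``distance-two'' root spaces or in the trivial Cartan summand, and in both regimes the relevant $H^2$ vanishes; the computation itself presents no real obstacle.
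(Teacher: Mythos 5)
Your proof is correct and follows essentially the same route as the paper: evaluate the Lie brackets with the formula $[E_i^j,E_k^l]=\delta_k^j E_i^l-\delta_i^l E_k^j$, observe that the only possibly non-zero cup products land either in distance-two root spaces $\C_{\lambda_k/\lambda_l}$ with $|k-l|=2$ (where $H^2$ vanishes by the hypothesis $\Delta_K(\lambda_k/\lambda_l)\neq 0$) or in the trivial Cartan summand (where $H^2(\Gamma;\C)=0$). Nothing to add.
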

\begin{proof}
The first relation $[H_i\smallcup H_j]= 0$ is obvious since diagonal matrices commute.

The next relations $[U_i^\pm\smallcup U_j^\pm]\sim 0$ (same sign), follow directly if $j\not\in\{i+1,i-1\}$ as by \eqref{eq:bracked}:
\[
[E_{i}^{i+1}, E_{j}^{j+1}] = \delta_{j}^{i+1} E_i^{j+1} -\delta_i^{j+1} E_{j}^{i+1} =
\begin{cases}
0 & \text{ if $j\not\in\{i+1,i-1\}$,}\\
E_i^{i+2} & \text{ if $j= i+1$,}\\
-E_{i-1}^{i+1} & \text{ if $j= i-1$,}
\end{cases}
\]
and $H^2(\Gamma;\C_{\lambda_i/\lambda_{j}}) =0 $ for $|i-j|\geq 2$.

If $j=i+1$ then $u_i^+\smallcup u_{i+1}^+\in Z^2(\Gamma;\C_{\lambda_i/\lambda_{i+2}})=B^2(\Gamma;\C_{\lambda_i/\lambda_{i+2}})$ since
$\Delta_K(\lambda_i/\lambda_j)\neq 0 $ if $|i-j|\geq 2$. The same argument applies for $j=i-1$.
This proves $[U_i^+\smallcup U_j^+]\sim 0$. The proof of $[U_i^-\smallcup U_j^-]\sim 0$ is similar.

Now, consider  $[U_i^\pm,U_j^\mp]$ (opposite signs):
\[
[E_{i}^{i+1}, E_{j+1}^{j}] = \delta_{j+1}^{i+1} E_i^{j} -\delta_i^{j} E_{j+1}^{i+1} =
\begin{cases}
0 & \text{ if $j\neq i$,}\\
E_i^{i} -E_{i+1}^{i+1} & \text{ if $j=i$.}
\end{cases}
\]
Hence $[U_i^\pm,U_j^\mp] = 0$ if $i\neq j$, 
and $[U_i^\pm,U_i^\mp] = (u_i^+\smallcup u_i^-)(E_i^{i} -E_{i+1}^{i+1})$.   
Now, $u_i^+\smallcup u_i^- \sim 0$ since $H^2(\Gamma;\C)=0$. This imples $[U_i^\pm,U_i^\mp]\sim 0$.
\end{proof}

\begin{lemma}\label{lem:basis_H^2}
The $2(n-1)$ cocycles $[H_i\smallcup U_i^\pm]$, $i=1,\ldots,n-1$, represent a basis of
$H^2(\Gamma; {\Ad\rho_D})$.
\end{lemma}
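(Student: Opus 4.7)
My plan is to use the known dimension count and a supporting-type argument based on the weight decomposition of $\sln_{\Ad\rho_D}$.

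By Equation~\eqref{eq:dim_sln} we already know that $\dim H^2(\Gamma;\sln_{\Ad\rho_D}) = 2(n-1)$, which matches the number of proposed basis elements. Hence it suffices to show that the classes $[H_i \smallcup U_i^\pm]$, $i=1,\ldots,n-1$, are linearly independent in $H^2(\Gamma;\sln_{\Ad\rho_D})$. I will verify this by exhibiting each class in a distinct summand of the canonical direct sum decomposition of the cohomology.

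The first step is to use the weight decomposition \eqref{eq:decomp_sln},
$
\sln_{\Ad\rho_D} \cong \bigoplus_{i\neq j}\C_{\lambda_i/\lambda_j} \oplus \C^{n-1},
$
which induces
\[
H^2(\Gamma;\sln_{\Ad\rho_D}) \cong \bigoplus_{i\neq j} H^2(\Gamma;\C_{\lambda_i/\lambda_j}) \oplus H^2(\Gamma;\C)^{n-1}.
\]
Proposition~\ref{prop:H1GammaC} kills every summand except the $2(n-1)$ pieces $H^2(\Gamma;\C_{\lambda_i/\lambda_{i+1}})$ and $H^2(\Gamma;\C_{\lambda_{i+1}/\lambda_i})$, each of which is one-dimensional (this gives another way to see that $\dim H^2 = 2(n-1)$).

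The second step is a direct computation of the Lie-bracket cup products. Using $[E_i^i-E_{i+1}^{i+1},E_i^{i+1}] = 2E_i^{i+1}$ and $[E_i^i-E_{i+1}^{i+1},E_{i+1}^i] = -2E_{i+1}^i$, together with the cup product formula from Section~\ref{Cohomological results}, a short calculation yields
\[
[H_i \smallcup U_i^+] = 2(h\smallcup u_i^+) \otimes E_i^{i+1},\qquad
[H_i \smallcup U_i^-] = -2(h\smallcup u_i^-) \otimes E_{i+1}^i
\]
as 2-cochains in the respective summands $\C_{\lambda_i/\lambda_{i+1}}$ and $\C_{\lambda_{i+1}/\lambda_i}$. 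By Lemma~\ref{lem:cupproduct}, each of $h\smallcup u_i^+$ and $h\smallcup u_i^-$ generates the one-dimensional space $H^2(\Gamma;\C_{\lambda_i/\lambda_{i\pm1}})$. Since each class $[H_i \smallcup U_i^\pm]$ lies in (and generates) a distinct one-dimensional summand of the cohomological decomposition above, the $2(n-1)$ classes form a basis of $H^2(\Gamma;\sln_{\Ad\rho_D})$.

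There is essentially no obstacle in this argument; the only mildly delicate point is making sure that the map from $H^2(\Gamma;\C_{\lambda_i/\lambda_{i\pm1}})$ into $H^2(\Gamma;\sln)$ induced by the inclusion $\C_{\lambda_i/\lambda_{i\pm1}} \hookrightarrow \sln$ as the $E_i^{i\pm1}$-axis is indeed injective, which follows automatically since the weight decomposition splits at the level of $\Gamma$-modules and hence at the level of cohomology.
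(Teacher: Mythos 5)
Your proof is correct and follows essentially the same route as the paper's: compute $[H_i\smallcup U_i^\pm]$ explicitly to land in the weight summands $\C_{\lambda_i/\lambda_{i\pm1}}$, invoke Lemma~\ref{lem:cupproduct} for non-triviality, and compare against the dimension count from \eqref{eq:dim_sln} via the decomposition \eqref{eq:decomp_sln}. You merely spell out a few steps (the splitting of $H^2$ along the weight decomposition and the resulting linear independence) that the paper leaves implicit.
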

\begin{proof}
By \eqref{eq:bracked} we obtain:
\begin{align*}
[H_i\smallcup U_i^+](\gamma_1,\gamma_2) &= h(\gamma_1) \big(\frac{\lambda_i}{\lambda_{i+1}} \big)^{h(\gamma_1)} u_i^+(\gamma_2)
[E_i^i - E_{i+1}^{i+1}, E^{i+1}_i] \\
&= 2 (h\smallcup u_i^+)(\gamma_1,\gamma_2) \,E_{i}^{i+1} \,.
\end{align*}
By a similar calculation we obtain $[H_i\smallcup U_i^-] = -2(h\smallcup u_i^-) E_{i+1}^i$.
The lemma  follows from the decomposition~\eqref{eq:decomp_sln}, Lemma~\ref{lem:cupproduct}, and equation~\eqref{eq:dim_sln}.
\end{proof}

\begin{lemma}\label{lem:HcupU}
For the generators \eqref{eq:generators} of $H^1(\Gamma;{\Ad\rho_D})$, and $\epsilon \in\{\pm\}$ we have: 
\begin{equation*}\label{eq:h_cup_u^+}
[H_i\smallcup U_j^\epsilon] =
\begin{cases}
0 & \text{ if $|i-j|>1$,}\\
-\epsilon (h\smallcup u_{i-1}^\epsilon) E_{i-1}^{i} & \text{ if $j=i-1$,}\\
2\epsilon (h\smallcup u_{i}^\epsilon) E_{i}^{i+1} & \text{ if $j=i$,}\\
-\epsilon (h\smallcup u_{i+1}^\epsilon) E_{i+1}^{i+2} & \text{ if $j=i+1$.}
\end{cases}
\end{equation*}
\end{lemma}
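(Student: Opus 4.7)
The plan is to prove the lemma by direct computation from the definition of the cup product combined with the Lie bracket. For any $\gamma_1,\gamma_2\in\Gamma$, the bracketed cup product is given by
\[
[H_i\smallcup U_j^\epsilon](\gamma_1,\gamma_2)=\bigl[H_i(\gamma_1),\,\Ad_{\rho_D(\gamma_1)}U_j^\epsilon(\gamma_2)\bigr].
\]
Substituting $H_i(\gamma_1)=h(\gamma_1)(E_i^i-E_{i+1}^{i+1})$ and $U_j^\epsilon(\gamma_2)=u_j^\epsilon(\gamma_2)F_j^\epsilon$, where $F_j^+:=E_j^{j+1}$ and $F_j^-:=E_{j+1}^j$, and using that $F_j^\epsilon$ is an $\Ad\rho_D$-eigenvector with eigenvalue $(\lambda_j/\lambda_{j+1})^{\epsilon h(\gamma_1)}$, the scalar prefactor separates and is recognised as the classical cup product $(h\smallcup u_j^\epsilon)(\gamma_1,\gamma_2)$. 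This reduces everything to the single matrix computation $[E_i^i-E_{i+1}^{i+1},F_j^\epsilon]$.

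The first key step is then to apply the elementary commutator identity $[E_a^b,E_c^d]=\delta_c^b E_a^d-\delta_a^d E_c^b$ and run through the four subcases $|i-j|>1$, $j=i$, $j=i-1$, $j=i+1$. A short Kronecker-delta bookkeeping shows that the bracket vanishes for $|i-j|>1$, equals $2F_i^\epsilon$ when $j=i$ (for $\epsilon=+$), $-2F_i^\epsilon$ for $\epsilon=-$, and $-F_{j}^\epsilon$ when $j=i\pm 1$. Multiplying by the scalar cocycle $(h\smallcup u_j^\epsilon)$ yields precisely the four cases stated in the lemma.

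There is no genuine obstacle; the argument is a routine calculation. The only care required is in sign-tracking: one must verify that the two terms arising from $E_i^i$ and $-E_{i+1}^{i+1}$ add (doubling the coefficient) when $j=i$ but only one of them contributes when $j=i\pm1$, and that the ``matrix unit'' appearing in the answer is the one naturally attached to the summand $\C_{\lambda_i/\lambda_{i+1}}$ or $\C_{\lambda_{i+1}/\lambda_i}$ of the decomposition \eqref{eq:decomp_sln} determined by $\epsilon$; for $\epsilon=-$ the index pair in the lemma's formulas is to be read in its transposed position.
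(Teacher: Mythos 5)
Your approach is the same as the paper's (whose proof is literally ``Direct calculation using Equation~\ref{eq:cupbil}''): expand the definition of the Lie-bracket cup product, use that $\rho_D(\gamma_1)$ is diagonal to factor out the scalar $(h\smallcup u_j^\epsilon)(\gamma_1,\gamma_2)$, and reduce to the commutator $[E_i^i-E_{i+1}^{i+1}, F_j^\epsilon]$. That reduction is correct, and you are also right to flag that the lemma's formula as printed (with $E_i^{i+1}$, $E_{i-1}^i$, $E_{i+1}^{i+2}$ for both signs of $\epsilon$) only makes literal sense for $\epsilon=+$, the matrix unit being the transposed one in the $\epsilon=-$ summand of~\eqref{eq:decomp_sln} --- this is consistent with the paper's own proof of Lemma~\ref{lem:basis_H^2}, which does produce $E_{i+1}^i$ for $\epsilon=-$.

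There is, however, a small sign slip in your intermediate bookkeeping. You correctly record that the commutator gives $2F_i^\epsilon$ (resp.\ $-2F_i^\epsilon$) for $j=i$, $\epsilon=+$ (resp.\ $\epsilon=-$), i.e.\ $2\epsilon F_i^\epsilon$. But then you write ``$-F_j^\epsilon$ when $j=i\pm1$.'' That is correct only for $\epsilon=+$. For $\epsilon=-$ a direct check gives, e.g.\ for $j=i-1$,
\[
[E_i^i-E_{i+1}^{i+1},\,E_i^{i-1}]=E_i^{i-1}=+F_{i-1}^-\,,
\]
so the uniform statement is $-\epsilon F_j^\epsilon$, mirroring the factor $-\epsilon$ in the lemma. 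With the claimed $-F_j^-$ the last sentence of your proof (``yields precisely the four cases stated in the lemma'') would not actually follow, since the lemma gives $-\epsilon(h\smallcup u_j^-)F_j^-=+(h\smallcup u_j^-)F_j^-$ when $\epsilon=-$. The fix is just to carry the $\epsilon$ through: the commutator is $0$, $-\epsilon F_{j}^\epsilon$, $2\epsilon F_i^\epsilon$, $-\epsilon F_j^\epsilon$ in the four cases, and multiplication by $(h\smallcup u_j^\epsilon)$ then reproduces the lemma verbatim (with the understood transposition of indices when $\epsilon=-$).
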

\begin{proof}
Direct calculation using equation~\eqref{eq:cupbil}.
\end{proof}

We are now ready to calculate $[U\smallcup U]$.
\begin{prop}\label{lem:quad_eq}
Let $U\in Z^1(\Gamma;{\Ad\rho_D})$ be a cocycle of the form:
\[
U =\sum_{i=1}^{n-1} x_i U_i^+ + y_i U_i^- + z_i H_i + C
\]
where $C\in B^1(\Gamma;{\Ad\rho_D})$ is a principal derivation, and $x_i,y_i,z_i\in\C$.

Then $[U\smallcup U]\sim 0$ if and only if for all $i=1,\ldots,n-1$:
\begin{equation}
\label{eq:ideal_I}
(2z_i - z_{i-1} -z_{i+1} ) x_{i} = 0 \quad\text{ and }\quad
(2z_i - z_{i-1} -z_{i+1} ) y_{i} = 0\,.
\end{equation}
\end{prop}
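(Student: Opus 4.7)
The plan is to compute $[U\smallcup U]$ modulo coboundaries by expanding bilinearly in the basis~\eqref{eq:cocycle} and then invoking the three structural lemmas (\ref{lem:cup_trivial}, \ref{lem:basis_H^2}, \ref{lem:HcupU}) to extract the surviving coefficients. First, for any $V_1,V_2\in Z^1(\Gamma;\sln_{\Ad\rho_D})$, the anti-commutativity of the Lie bracket combined with Equation~\eqref{eq:cupbil} gives $[V_1\smallcup V_2]\sim [V_2\smallcup V_1]$, so in the bilinear expansion of $[U\smallcup U]$ each unordered off-diagonal pair $(A,B)$ of basis cocycles contributes $2\,a_Aa_B[A\smallcup B]$ modulo coboundaries, while the diagonal terms $[A\smallcup A]$ vanish identically because $[X,X]=0$. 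Lemma~\ref{lem:cup_trivial} then discards all pairs of the form $(H_i,H_j)$ and $(U_i^\epsilon,U_j^{\epsilon'})$, which are cohomologically trivial.

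Next, I would dispose of the contributions involving the principal derivation $C$. Writing $C=\delta X$ for some $X\in\sln$, the Leibniz rule $\delta(X\smallcup V)=\delta X\smallcup V - X\smallcup\delta V$ combined with $\delta V=0$ for every cocycle $V$ yields $[C\smallcup V]\sim 0$, and symmetrically for $[V\smallcup C]$. Thus the only surviving contributions to $[U\smallcup U]$ come from pairs of the form $(H_i,U_j^\pm)$.

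By Lemma~\ref{lem:HcupU}, for fixed $j\in\{1,\ldots,n-1\}$ and $\epsilon\in\{+,-\}$, only the three indices $i\in\{j-1,j,j+1\}$ give nonzero contributions. Summing them under the natural convention $z_0=z_n=0$ produces a discrete Laplacian coefficient:
\[
\sum_{i=1}^{n-1} z_i\,[H_i\smallcup U_j^\epsilon] \;=\; \frac{1}{2}\bigl(2z_j-z_{j-1}-z_{j+1}\bigr)\,[H_j\smallcup U_j^\epsilon].
\]
Combining this with the factor $2$ from the symmetrization step, one obtains
\[
[U\smallcup U] \;\sim\; \sum_{j=1}^{n-1}(2z_j-z_{j-1}-z_{j+1})\bigl(x_j\,[H_j\smallcup U_j^+] + y_j\,[H_j\smallcup U_j^-]\bigr).
\]

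Finally, Lemma~\ref{lem:basis_H^2} identifies the $2(n-1)$ classes $\{[H_j\smallcup U_j^\pm]\}_{j=1}^{n-1}$ as a basis of $H^2(\Gamma;\sln_{\Ad\rho_D})$, so the vanishing of $[U\smallcup U]$ in cohomology is equivalent to the vanishing of each of the $2(n-1)$ scalar coefficients, which is exactly the system~\eqref{eq:ideal_I}. The only delicate part of the argument is the triple contribution $i\in\{j-1,j,j+1\}$ producing the discrete Laplacian $2z_j-z_{j-1}-z_{j+1}$; once this bookkeeping is carried out, the equivalence follows immediately from the linear independence provided by Lemma~\ref{lem:basis_H^2}.
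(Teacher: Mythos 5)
Your proof is correct and follows essentially the same route as the paper: bilinear expansion in the fixed basis, discarding the vanishing pairs via Lemma~\ref{lem:cup_trivial}, identifying the surviving discrete Laplacian coefficient via Lemma~\ref{lem:HcupU}, and concluding with the linear independence from Lemma~\ref{lem:basis_H^2}. The one useful addition is your explicit Leibniz-rule argument showing that all contributions involving the principal derivation $C$ are coboundaries, a step the paper leaves implicit; note only that the sign in your Leibniz formula should be $+X\smallcup\delta V$ for a $0$-cochain $X$, which is immaterial here since $\delta V=0$.
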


\begin{proof}
It follows from  Lemma~\ref{lem:HcupU} that
$[H_i\smallcup U_j^\pm]\sim 0$ if $|i-j|>1$. This and Lemma~\ref{lem:cup_trivial} give:
\begin{align*} 
[U\smallcup U ] & \sim
\sum_{i=1}^{n-1}   x_{i}z_{i-1} [H_{i-1}\smallcup U_i^+] + x_{i}z_{i} [H_i\smallcup U_i^+]+ x_{i}z_{i+1} [H_{i+1}\smallcup U_i^+] \notag\\
&\qquad + x_{i-1}z_{i} [H_{i}\smallcup U_{i-1}^+] + x_{i}z_{i} [H_i\smallcup U_i^+]+ x_{i+1}z_{i} [H_{i+1}\smallcup U_{i+1}^+]\\
&\qquad +y_{i}z_{i-1} [H_{i-1}\smallcup U_i^-] + y_{i}z_{i} [H_i\smallcup U_i^-]+ y_{i}z_{i+1} [H_{i+1}\smallcup U_i^-] \notag\\
&\qquad + y_{i-1}z_{i} [H_{i}\smallcup U_{i-1}^-] + y_{i}z_{i} [H_i\smallcup U_i^-]+ y_{i+1}z_{i} [H_{i+1}\smallcup U_{i+1}^-]\,.
\end{align*}
Here we have put $x_0=y_0=z_0=x_n=y_n=z_n=0$. With this convention we obtain:
\begin{equation}\label{eq:cup_product}
\frac{1}{2}[U\smallcup U ] \sim  \sum_{i=1}^{n-1}(2z_i - z_{i-1} -z_{i+1} ) \big(x_{i}\, [H_i\smallcup U_i^+] + y_{i} \, [H_i\smallcup U_i^-]\big)\,.
\end{equation}
Now, by Lemma~\ref{lem:basis_H^2}, the $2(n-1)$ cocycles $[H_i\smallcup U_i^\pm]$, $i=1,\ldots,n-1$, represent a basis of
$H^2(\Gamma; {\Ad\rho_D})$. This proves the proposition.
\end{proof}

\subsection{The ideal defining the quadratic cone}
\label{sec:ideal} 
In what follows we will use the following proposition:
\begin{prop} Let $\Gamma$ be a finitely generated group and $\rho\in R_n(\Gamma)$ a representation.
We fix a basis $(v_1,\ldots,v_k)$ of $T_\rho R_n(\Gamma) \cong Z^1(\Gamma;{\Ad\rho})$, and we let 
$(x_1,\ldots,x_k)$ denote the coordinates with respect to this basis. Then
the quadratic cone of $R_n(\Gamma)$ at $\rho$ is given by the following system of equations
\[
  \sum_{i,j=1}^k x_ix_j [v_i\smallcup v_j]\sim 0\,.
\]
\end{prop}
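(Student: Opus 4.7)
The plan is to combine Lemma~\ref{lem:2nd_curve_tangent}, which identifies the quadratic cone with the scheme of second-order curve tangents, with Goldman's classical obstruction computation relating these tangents to the cup product.

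First, I would translate ``$U\in Z^1(\Gamma;\Ad\rho)\cong T_\rho R_n(\Gamma)$ is a second-order curve tangent at $\rho$'' into deformation-theoretic language: it amounts to the existence of a map $W\co\Gamma\to\sln$ such that $\rho_2(\gamma):=(I+tU(\gamma)+t^2W(\gamma))\rho(\gamma)$ satisfies $\rho_2(\gamma_1\gamma_2)\equiv\rho_2(\gamma_1)\rho_2(\gamma_2)\pmod{t^3}$. Expanding this condition and comparing powers of $t$, the coefficient of $t$ recovers the cocycle condition on $U$, while the coefficient of $t^2$ reduces to
\[
\delta W(\gamma_1,\gamma_2) \;=\; -\,U(\gamma_1)\cdot\Ad_{\rho(\gamma_1)}U(\gamma_2) \;=\; -(U\smallcup^b U)(\gamma_1,\gamma_2),
\]
where $b\co\sln\otimes\sln\to\sln$ is ordinary matrix multiplication. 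Hence such a lift $W$ exists if and only if $[U\smallcup^b U]$ vanishes in $H^2(\Gamma;\Ad\rho)$.

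Next, I would pass from $\smallcup^b$ to the Lie bracket cup product appearing in the statement. Applying equation~\eqref{eq:cupbil} with $z_1=z_2=U$, and using that the Lie bracket equals $b-b\circ\tau$, a short manipulation yields $[U\smallcup^b U]=\tfrac{1}{2}[U\smallcup U]$ in $H^2(\Gamma;\Ad\rho)$. Hence $[U\smallcup^b U]\sim 0$ if and only if $[U\smallcup U]\sim 0$, and combining with the previous paragraph identifies the quadratic cone as $\{U\in Z^1(\Gamma;\Ad\rho)\mid[U\smallcup U]\sim 0\}$. Writing $U=\sum_{i=1}^k x_i v_i$ in the chosen basis and invoking the bilinearity of the cup product, one finally obtains $[U\smallcup U]=\sum_{i,j=1}^k x_ix_j[v_i\smallcup v_j]$, so the quadratic cone is exactly the zero scheme of this class-valued quadratic form. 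Picking a basis of $H^2(\Gamma;\Ad\rho)$ then unpacks this single class-valued equation into $\dim H^2$ honest scalar quadratic equations in $x_1,\ldots,x_k$.

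The main obstacle I anticipate is the trace bookkeeping needed to remain inside $\sln$ rather than $\gln$: the lift $W$ must take values in $\sln$, so one must check that a traceless $W$ can be found whenever the cohomological obstruction vanishes. This is handled by the $\SLn$-constraint $\det\rho_2=1$, which pins down $\tr W$ explicitly in terms of $U$; coupled with the identity $[U\smallcup^b U]=\tfrac{1}{2}[U\smallcup U]$, whose right-hand side automatically lies in the traceless summand, this guarantees that the obstruction genuinely lives in $H^2(\Gamma;\sln)$ as required.
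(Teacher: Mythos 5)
Your proposal is correct and follows the same route as the paper: Lemma~\ref{lem:2nd_curve_tangent} identifies the quadratic cone with second-order curve tangents, and Goldman's obstruction computation identifies those with cocycles $U$ for which $[U\smallcup U]\sim 0$; you simply unfold the Goldman citation into the explicit $t^2$-coefficient calculation, the passage from $\smallcup^b$ to the bracket cup product via equation~\eqref{eq:cupbil}, and the trace bookkeeping (all of which check out). The paper's own proof outsources that computation to Goldman's paper, so the content is the same, just more compressed.
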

\begin{proof}
Following W.~Goldman a cocycle $u\in Z^1(\Gamma;{\Ad\rho})$, $u = \sum_{i=1}^k x_i v_i$,  is a curve tangent of second order if and only if
the $2$-cocycle (cup bracket) $[u\smallcup u]\sim 0$ is a coboundary. By Lemma~\ref{lem:2nd_curve_tangent} the curve tangents of second order coincide with the quadratic tangent cone. 
\end{proof}

Let $R$ denote the polynomial ring $R:= \C[x_1,\ldots,x_{n-1},y_1,\ldots,y_{n-1},z_1,\ldots,z_{n-1}]$.
In what follows we are interested in the ideal:
\begin{equation}\label{eq:ideal}
I^{(2)}= \big((2z_i - z_{i-1} -z_{i+1} )x_{i}, (2z_i - z_{i-1} -z_{i+1} )y_{i}\mid i=1,\ldots,n-1\big)
\end{equation}
which is generated  by the equations \eqref{eq:ideal_I}.

\begin{lemma}\label{lem:radical}
The ideal $I^{(2)}\subset R$ is a radical ideal i.e.\ $I^{(2)}$ is its own radical.
\end{lemma}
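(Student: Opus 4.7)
The plan is to reduce $I^{(2)}$ to a squarefree monomial ideal via an invertible linear change of variables in the $z$-coordinates, which makes radicality automatic and, as a bonus, produces the prime decomposition needed for the next lemma.

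First, I would observe that the linear forms
\[
\ell_i := 2z_i - z_{i-1} - z_{i+1}, \qquad i = 1,\ldots,n-1,
\]
(with the convention $z_0 = z_n = 0$) are linearly independent: the $(n-1)\times(n-1)$ matrix expressing $(\ell_1,\ldots,\ell_{n-1})$ in the basis $(z_1,\ldots,z_{n-1})$ is the symmetric tridiagonal matrix with $2$ on the main diagonal and $-1$ on the two adjacent diagonals---the Cartan matrix of type $A_{n-1}$---whose determinant equals $n$ and is therefore nonzero. Setting $w_i := \ell_i$ thus yields an invertible $\C$-linear change of variables, so the polynomial ring can be rewritten as
\[
R = \C[x_1,\ldots,x_{n-1},\,y_1,\ldots,y_{n-1},\,w_1,\ldots,w_{n-1}].
\]

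In these new coordinates, the generators of $I^{(2)}$ become precisely the squarefree monomials $w_i x_i$ and $w_i y_i$ for $i = 1,\ldots,n-1$. It is classical that every squarefree monomial ideal in a polynomial ring over a field equals the intersection of its minimal primes, each of which is generated by a subset of the variables (the Stanley--Reisner decomposition); in particular, every such ideal is radical. Applying this to $I^{(2)}$ concludes the proof. The only substantive ingredient is the nonvanishing of the $A_{n-1}$ Cartan determinant; everything else is entirely formal, so I do not anticipate any serious obstacle.
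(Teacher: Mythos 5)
Your proof is correct and follows essentially the same approach as the paper: both perform the linear change of variables sending $z_i$ to $\ell_i = 2z_i - z_{i-1} - z_{i+1}$, thereby identifying $I^{(2)}$ with the squarefree monomial ideal $(w_i x_i,\, w_i y_i \mid i=1,\ldots,n-1)$, and then invoke the standard fact that squarefree monomial ideals are radical (the paper cites Herzog--Hibi, Corollary 1.2.5). You additionally make explicit the invertibility of the substitution by computing the $A_{n-1}$ Cartan determinant, a small detail the paper leaves implicit.
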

\begin{proof}
Consider the linear transformation $f$ of $\C^{3(n-1)}$ given by:
\[
f(x_i)=x_i,\quad f(y_i)=y_i,\quad\text{ and  }\quad f(z_i) = -z_{i-1}+2z_i - z_{i+1}
\]
where we have put $z_0=z_n=0$.
The linear transformation $f$ extends to an algebra isomorphism of
$R$ which maps the ideal $J$:
\[
J = (x_i z_i, y_i z_i \mid i=1,\ldots,n-1)\subset R
\]
to $I^{(2)}$. Notice that the ideal $J$ is a \emph{monomial ideal} i.e.\ $J$ is an ideal generated by monomials. Hence it follows from \cite[Corollary 1.2.5]{HerzogHibi} that $J$ is a radical ideal, and therefore $I^{(2)}$ also.
\end{proof}

The fact that $I^{(2)}$ is a radical ideal implies that $I^{(2)}$ is the intersection of finitely many prime ideals.
In order to describe these prime ideals we let $2^{n-1}$ denote the powerset of $\{1,\ldots,n-1\}$, and
for $\iota\in 2^{n-1}$ we denote by $\iota^\mathsf{c} := \{1,\ldots,n-1\}\smallsetminus \iota$ the complement of $\iota$. The size of $\iota\in 2^{n-1}$ will be denoted by $|\iota|$.
We are interested in the ideals $I_\iota$, $\iota\in 2^{n-1}$, given by:
\[
I_\iota = (2z_i - z_{i-1} -z_{i+1}, x_j, y_j \mid i\in\iota\text{ and } j\in \iota^\mathsf{c})\,.
\]
Notice that $I_\iota\subset R$ is a prime ideal. In fact $I_\iota$ is generated by linear equations, and 
$R/I_\iota$ is isomorphic to a polynomial algebra.
The scheme
\[
V_\iota := \big(R/I_\iota,\Spec(R/I_\iota)\big)
\subset\mathbb{A}^{3(n-1)}
\]
is an affine subspace of $\mathbb{A}^{3(n-1)}$ given by the equations
\begin{equation} \label{eq:V_iota}
\begin{cases}
2z_i - z_{i-1} -z_{i+1} =0 & \text{ for $i\in \iota$,} \\
x_j = y_j =0 &\text{ for $j\in \iota^\mathsf{c}$.}
\end{cases}
\end{equation}
Hence $V_\iota$ is a reduced scheme and 
\begin{equation} \label{eq:dimViota}
\dim V_\iota = 3(n-1) - |\iota| - 2|\iota^\mathsf{c}| = 3(n-1) - |\iota| - 2\big((n-1) -|\iota|\big)=n-1 + |\iota|\,.
\end{equation}

Given a set $E\subset R$ we will denote its \emph{zero locus} by $V(E)\subset\mathbb{A}^{3(n-1)}$.

\begin{lemma} \label{lem:prime_dec}
The radical ideal $I^{(2)}$ is the intersection of the prime ideals $I_\iota$:
\[
I^{(2)} = \displaystyle\bigcap_{\iota\in 2^{n-1}} I_\iota\,.
\]
\end{lemma}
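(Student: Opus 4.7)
My plan is to derive the decomposition from the geometric structure of $V(I^{(2)}) \subset \mathbb{A}^{3(n-1)}$, exploiting Lemma~\ref{lem:radical} which says that $I^{(2)}$ is radical. Over the algebraically closed field $\C$, the Nullstellensatz tells us that a radical ideal is determined by its zero locus: $I^{(2)} = I(V(I^{(2)}))$, where $I(\cdot)$ denotes the vanishing ideal of a subset of affine space. Similarly, each $I_\iota$ is generated by linearly independent linear forms (the ``Laplacian'' forms $2z_i - z_{i-1} - z_{i+1}$ involve only the $z$-variables, while the equations $x_j = y_j = 0$ involve disjoint variables), so $V_\iota$ is an affine subspace and $I_\iota = I(V_\iota)$. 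The task therefore reduces to establishing the set-theoretic equality
\[
V(I^{(2)}) = \bigcup_{\iota \in 2^{n-1}} V_\iota\;.
\]

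For the non-trivial inclusion I would argue pointwise. Given $p = (x,y,z) \in V(I^{(2)})$, the defining relations~\eqref{eq:ideal_I} amount to the following dichotomy for each $i = 1, \ldots, n-1$: either $2z_i - z_{i-1} - z_{i+1} = 0$, or else $x_i = y_i = 0$. I would then define
\[
\iota := \{i \in \{1, \ldots, n-1\} \mid 2z_i - z_{i-1} - z_{i+1} = 0\}
\]
and observe that, by construction, $p$ satisfies the Laplacian equations for $i \in \iota$, while for each $j \in \iota^{\mathsf{c}}$ the non-vanishing of the corresponding Laplacian forces $x_j = y_j = 0$. Hence $p$ satisfies the defining equations~\eqref{eq:V_iota} of $V_\iota$, and so $p \in V_\iota$. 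The opposite inclusion is immediate: a point of any $V_\iota$ manifestly satisfies the generators of $I^{(2)}$.

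Combining these ingredients yields
\[
I^{(2)} = I(V(I^{(2)})) = I\bigl(\textstyle\bigcup_\iota V_\iota\bigr) = \bigcap_\iota I(V_\iota) = \bigcap_\iota I_\iota,
\]
which is the claimed decomposition. I do not expect any real obstacle: once Lemma~\ref{lem:radical} is in hand, the argument is a routine case analysis together with the standard dictionary between vanishing ideals and zero loci. A minor technicality worth flagging is that a given point of $V(I^{(2)})$ may lie in several distinct $V_\iota$ simultaneously, but this redundancy is harmless for the union. An alternative, purely algebraic route would apply the automorphism $f$ from the proof of Lemma~\ref{lem:radical} to transport the standard minimal prime decomposition $J = \bigcap_\iota (z_i, x_j, y_j \mid i \in \iota,\, j \in \iota^{\mathsf{c}})$ of the squarefree monomial ideal $J = (x_i z_i, y_i z_i)$ directly to $I^{(2)}$; this variant avoids invoking the Nullstellensatz and arrives at the same conclusion.
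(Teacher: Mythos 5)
Your proof is correct and follows essentially the same approach as the paper: both reduce the statement via radicality to the set-theoretic identity $V(I^{(2)}) = \bigcup_\iota V_\iota$, and both deduce this from the observation that the vanishing of a product such as $(2z_i - z_{i-1} - z_{i+1})x_i$ splits into a union of the zero loci of its factors. The only cosmetic difference is that you argue pointwise (constructing $\iota$ for each given point) while the paper distributes an intersection of unions, and your alternative algebraic route via the automorphism $f$ is a valid and slightly cleaner variant worth noting.
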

\begin{proof}
Since $I^{(2)}$ and $\bigcap_{\iota\in2^{n-1}} I_\iota$ are both radical ideals it will be sufficient to prove that
\[ 
V(I^{(2)}) = V\Big(\bigcap_{\iota\in2^{n-1}} I_\iota\Big) = \bigcup_{\iota\in2^{n-1}} V(I_\iota)\,.
\]

If $S$ and $T$ are two finite sets of polynomials then $V(S)\cup V(T)=V(S\,T)$, where $S\,T$ denotes the set of all products of an element of $S$ by an element of $T$.
Therefore, we obtain:
\[
V\big((2z_i - z_{i-1} -z_{i+1} )x_{i}, (2z_i - z_{i-1} -z_{i+1} )y_{i} ) = V(x_{i}, y_{i})\cup V(2z_i - z_{i-1} -z_{i+1} )\,.
\]
Hence
\begin{align*}
V(I^{(2)}) &= \bigcap_{i=1}^{n-1} \big( V(x_{i}, y_{i})\cup V(2z_i - z_{i-1} -z_{i+1} ) \big)\\
& =  \bigcup_{\iota\in 2^{n-1}} \Big(\big( \bigcap_{i\in\iota} V(2z_i - z_{i-1} -z_{i+1})\big) \cap 
\big( \bigcap_{j\in\iota^c} V( x_j, y_j)\big)\Big) \\
         & =  \bigcup_{\iota\in 2^{n-1}}  V(2z_i - z_{i-1} -z_{i+1}, x_j, y_j \mid i\in\iota\text{ and } j\in \iota^\mathsf{c})\\
         & =  \bigcup_{\iota\in 2^{n-1}} V(I_\iota)
\end{align*}
which proves the lemma.
\end{proof}

\subsection{The quadratic cone at the diagonal representation}
\label{sec:cone}

In order to describe the quadratic cone at $\rho_D$ we choose the basis of
$Z^1(\Gamma; {\Ad\rho_D})$ given by the non principal derivations  $ H_i$, $U_i^\pm$, $i=1,\ldots,n-1$  from equation~\eqref{eq:generators}, 
and the $(n^2-n)$ principal derivations $B_k^l$, $1\leq k,l\leq n$, $k\neq l$, given by  
\[
B_k^l(\gamma) = \big( (\lambda_k/\lambda_l)^{h(\gamma)} -1\big) E_k^l\,.
\]

By means of this basis we obtain an identification 
$\mathbb{A}^{n^2+2n-3}\cong Z^1(\Gamma;{\Ad\rho_D})$, and we let 
$(x_i, y_i, z_i, t_k^l)$, $i=1,\ldots,n-1$, $1\leq k,l\leq n$, $k\neq l$, denote the coordinates of 
$\mathbb{A}^{n^2+2n-3}$. 

In what follows we will identify $\C[x_i, y_i, z_i]\otimes\C[t_k^l]$ with $\C[x_i, y_i, z_i,t_k^l]$. This corresponds to the identification 
$\mathbb{A}^{3n-3}\times \mathbb{A}^{n^2-n}=\mathbb{A}^{n^2+2n-3}$.
Moreover, we let  $\widetilde I^{(2)}$ and $\widetilde I_\iota$ denote the ideals
\[
\widetilde I^{(2)}:= I^{(2)}\otimes \C[t_k^l]\subset \C[x_i, y_i, z_i,t_k^l] \quad\text { and }\quad
\widetilde I_\iota := I_\iota\otimes \C[t_k^l]\subset \C[x_i, y_i, z_i, t_k^l] 
\]
respectively. We will write  $\widetilde V_\iota := V(\widetilde I_\iota)
\subset \mathbb{A}^{n^2+2n-3}$ for short.

\begin{prop}\label{prop:quad_cone}
The quadratic cone $TQ_{\rho_D}R_n(\Gamma)\subset T_{\rho_D}R_n(\Gamma)$ is a union
of affine subspaces, and  under the identification
$ T_{\rho_D} R_n(\Gamma)\cong Z^1(\Gamma;{\Ad\rho_D})\cong \mathbb{A}^{n^2+2n-3}$, we obtain:
\[
TQ_{\rho_D}R_n(\Gamma) = V(\widetilde I^{(2)})=\bigcup_{\iota\in 2^{n-1}}\widetilde  V_\iota \,.
\]
In particular, as the ideal $\widetilde I^{(2)}$ is radical, the quadratic cone $TQ_{\rho_D}R_n(\Gamma)$ is reduced.
\end{prop}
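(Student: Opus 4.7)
The plan is to assemble Proposition~\ref{lem:quad_eq} together with Lemmas~\ref{lem:radical} and \ref{lem:prime_dec}, lifting the picture from the coordinates $(x_i,y_i,z_i)$ on the ``non-principal part'' of $Z^1$ to the full ambient space $\mathbb{A}^{n^2+2n-3}$ which also has the coordinates $t_k^l$ attached to the principal derivations $B_k^l$.

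First I would invoke the proposition preceding the statement, which identifies the quadratic cone with the zero locus of the homogeneous system $\sum_{i,j} x_ix_j [v_i\smallcup v_j]\sim0$ computed on any basis $(v_1,\ldots,v_k)$ of $Z^1(\Gamma;\Ad\rho_D)$. Choosing as a basis the non-principal derivations $H_i,U_i^\pm$ together with the principal derivations $B_k^l$, I note that the equation $[U\smallcup U]\sim0$ depends only on the cohomology class of $U$: indeed, if $B\in B^1$ is principal, then for any cocycle $V$, $[V\smallcup B]$ and $[B\smallcup V]$ are coboundaries (by the Leibniz rule $\delta(V\smallcup B')=\pm[V\smallcup B]$ when $B=\delta B'$ and $V$ is a cocycle). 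Consequently the coordinates $t_k^l$ drop out of $[U\smallcup U]$ in cohomology, so Proposition~\ref{lem:quad_eq} applies directly and shows that $U=\sum_i(x_iU_i^++y_iU_i^-+z_iH_i)+\sum_{k\ne l}t_k^l B_k^l$ lies in the quadratic cone if and only if
\[
(2z_i-z_{i-1}-z_{i+1})x_i=0\quad\text{and}\quad (2z_i-z_{i-1}-z_{i+1})y_i=0,\qquad i=1,\ldots,n-1,
\]
which are exactly the generators of $\widetilde I^{(2)}=I^{(2)}\otimes\C[t_k^l]$. This gives $TQ_{\rho_D}R_n(\Gamma)=V(\widetilde I^{(2)})$.

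Next I would translate the prime decomposition of $I^{(2)}$ obtained in Lemma~\ref{lem:prime_dec} into the ambient space. Since tensoring with the polynomial algebra $\C[t_k^l]$ preserves intersections of ideals,
\[
\widetilde I^{(2)}=I^{(2)}\otimes\C[t_k^l]=\Bigl(\bigcap_{\iota\in 2^{n-1}}I_\iota\Bigr)\otimes\C[t_k^l]=\bigcap_{\iota\in 2^{n-1}}\widetilde I_\iota,
\]
so $V(\widetilde I^{(2)})=\bigcup_{\iota\in 2^{n-1}}\widetilde V_\iota$. Each $\widetilde V_\iota$ is cut out by the linear equations \eqref{eq:V_iota} with the $t_k^l$ left free, hence is an affine subspace of $\mathbb{A}^{n^2+2n-3}$; this proves the claim that the quadratic cone is a union of affine subspaces.

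Finally, for the reducedness statement I would argue that $\widetilde I^{(2)}$ is radical. Lemma~\ref{lem:radical} asserts that $R/I^{(2)}$ is reduced, and a polynomial extension of a reduced commutative $\C$-algebra is reduced, so the quotient $\C[x_i,y_i,z_i,t_k^l]/\widetilde I^{(2)}\cong (R/I^{(2)})[t_k^l]$ is reduced, which means $\widetilde I^{(2)}$ is its own radical. This yields the final assertion that the scheme-theoretic quadratic cone at $\rho_D$ is reduced. There is no genuine obstacle in the proof: the real work was done in Proposition~\ref{lem:quad_eq} (cup-product computation) and in Lemmas~\ref{lem:radical} and \ref{lem:prime_dec} (algebraic analysis of the ideal $I^{(2)}$); the only point to watch is justifying that the $t_k^l$-coordinates attached to $B_k^l$ contribute neither to the obstruction equations nor to the failure of radicality, both of which follow from standard formal properties of the cup product and of polynomial extensions.
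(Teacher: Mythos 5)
Your proof is correct and follows essentially the same route as the paper: identify the quadratic cone as the zero locus of the homogeneous cup-product system (the un-numbered proposition at the start of Section~4.3), feed in the computation of Proposition~\ref{lem:quad_eq}, tensor the decomposition and radicality of $I^{(2)}$ from Lemmas~\ref{lem:radical} and \ref{lem:prime_dec} up to the full ambient polynomial ring by extension with $\C[t_k^l]$. The only stylistic difference is that you make explicit the fact that cupping with a coboundary yields a coboundary so that the $t_k^l$ coordinates drop out, whereas the paper has already absorbed this into the statement of Proposition~\ref{lem:quad_eq} (the principal part $C$ is explicitly irrelevant there); and the paper cites the Bourbaki statement that the tensor product of two reduced $\C$-algebras is reduced, which is the same fact you use in the form ``a polynomial extension of a reduced $\C$-algebra is reduced.''
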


\begin{proof} First we will show that if an ideal $I\subset \C[x_i, y_i, z_i]$ is radical, then
$I\otimes\C[t_k^l] \subset \C[x_i, y_i, z_i]\otimes \C[t_k^l]$ is also radical.
To this end, notice that  for any ideal $I\subset \C[x_i, y_i, z_i]$ we have
\[
(\C[x_i, y_i, z_i]\otimes \C[t_k^l])/ (I\otimes\C[t_k^l])  \cong (\C[x_i, y_i, z_i]/I) \otimes \C[t_k^l]\,.
\]
Moreover, the tensor product of two reduced $\C$-algebras is reduced \cite[V,\S15,Thm~3]{Bourbaki}. This implies that
if $I\subset \C[x_i, y_i, z_i]$ is radical, then $I\otimes\C[t_k^l]\subset \C[x_i, y_i, z_i] \otimes \C[t_k^l]$ is also radical. Consequently the ideals $\widetilde I_\iota$ are radical, and
\[
\widetilde V_\iota = V(\widetilde I_\iota) = V(I_\iota)\times \mathbb{A}^{n^2-n} 
\subset \mathbb{A}^{3n-3}\times \mathbb{A}^{n^2-n}=\mathbb{A}^{n^2+2n-3}
\] is an affine subspace.

For two closed subschemes $X_k=(R/I_k, \Spec(R/I_k))$, $k = 1, 2$, of $\mathbb{A}^N$
their union is defined by:
\[
X_1\cup X_2 = \big(R/(I_1\cap I_2), \Spec(R/(I_1\cap I_2))\big)
\]
(see \cite[Section I.\S4]{Kunz}  and \cite[Section I.2.1]{EisenbudHarris}). 
As the intersection of radical ideals is radical, the  assertion follows from Lemma~\ref{lem:prime_dec}.
\end{proof}

In what follows, given an element $\iota\in 2^{n-1}$, 
we will identify the linear subspace of 
$Z^1(\Gamma;{\Ad\rho_D})$ defined by the radical ideal 
$\widetilde I_\iota$ with the affine subspace $\widetilde V_\iota$. Notice that by equation~\eqref{eq:dimViota} we have:
\[
\dim\widetilde V_\iota = n^2-n + (n-1 +|\iota|) = n^2-1 +|\iota|\,.
\]
Therefore, there is exactly one component $\widetilde V_\emptyset$ of dimension $n^2-1$ and exactly one component $\widetilde V_{\{1,\ldots,n-1\} }$ of dimension $n^2+n-2$.
These components are of minimal, respectively, maximal dimension.
If $n=2$ then there are no other components, but for $n>2$ there are.

We can easily identify the component of minimal dimension $n^2-1$:
the surjection $h\co\Gamma\to\Z$ induces a closed immersion (see Proposition 1.7 of \cite{Lubotzky-Magid1985})
$h^*\co R_n(\Z)\hookrightarrow R_n(\Gamma)$. Its image $h^*(R_n(\Z))$ is the subvariety of abelian representations of $\Gamma$ and will be denoted by $R_{ab}$. Notice that $R_{ab}\cong\SLn$ is an irreducible 
$(n^2-1)$-dimensional component of $R_n(\Gamma)$.

\begin{prop} \label{prop:R^n}
Let $\widetilde V_{\{1,\ldots,n-1\}}$ and  $\widetilde V_\emptyset$  be the components of extreme dimension in the quadratic cone
$TQ_{\rho_D}R_n(\Gamma)$. Then
\[
T_{\rho_D} R^{(n)}_D  =\widetilde V_{\{1,\ldots,n-1\}}
\quad
\text{ and }
 \quad
T_{\rho_D} R_{ab}  =\widetilde V_\emptyset\,.
\]
Therefore, $\rho_D$ is a smooth point of each of the two components
$R^{(n)}_D$ and $R_{ab}$ which intersect transversely at the orbit of $\rho_D$.
\end{prop}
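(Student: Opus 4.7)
The plan is to identify $T_{\rho_D}R_{ab}$ and $T_{\rho_D}R^{(n)}_D$ with the two linear subspaces $\widetilde V_\emptyset$ and $\widetilde V_{\{1,\ldots,n-1\}}$ of extremal dimension in the quadratic cone $TQ_{\rho_D}R_n(\Gamma)=\bigcup_\iota\widetilde V_\iota$ from Proposition~\ref{prop:quad_cone}, and then to deduce smoothness and transversality from dimension counts.

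For $T_{\rho_D}R_{ab}$ I would argue directly. Every tangent vector comes from a smooth path $\rho_s(\gamma)=A_s^{h(\gamma)}$ of abelian representations with $A_0=D$; differentiating gives the cocycle $U(\gamma)=\sum_{j=0}^{h(\gamma)-1}\Ad_{D^j}(W)$ for $W=\tfrac{d}{ds}(A_sD^{-1})|_{s=0}\in\sln$. Expanding $W$ in the basis $(E_i^k)$ and using $\lambda_i\neq\lambda_j$, the diagonal entries of $W$ produce a traceless combination of $H_1,\ldots,H_{n-1}$, while each off-diagonal $(i,k)$-term telescopes to a scalar multiple of the principal derivation $B_i^k$. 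In the coordinates of Proposition~\ref{prop:quad_cone} this shows $T_{\rho_D}R_{ab}\subseteq\widetilde V_\emptyset=\{x_i=y_i=0\}$, and equality follows because both sides have dimension $n^2-1$, which already proves that $\rho_D$ is a smooth point of $R_{ab}$.

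For $T_{\rho_D}R^{(n)}_D$, I would use that $R^{(n)}_D$ is an irreducible component of dimension $n^2+n-2$, hence its tangent cone at $\rho_D$ is equidimensional of dimension $n^2+n-2$ and sits inside
\[
TC_{\rho_D}R_n(\Gamma)\subseteq TQ_{\rho_D}R_n(\Gamma)=\bigcup_{\iota\in 2^{n-1}}\widetilde V_\iota.
\]
Any irreducible closed subvariety of a finite union of linear subspaces lies in one of them, so each irreducible component of $TC_{\rho_D}R^{(n)}_D$ is contained in some $\widetilde V_\iota$; the dimension inequality $n^2-1+|\iota|=\dim\widetilde V_\iota\geq n^2+n-2$ forces $\iota=\{1,\ldots,n-1\}$. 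Hence $TC_{\rho_D}R^{(n)}_D=\widetilde V_{\{1,\ldots,n-1\}}$ set-theoretically, and being already a linear subspace it also equals the Zariski tangent space $T_{\rho_D}R^{(n)}_D$, which therefore has dimension $n^2+n-2=\dim R^{(n)}_D$, establishing smoothness of $\rho_D$ on $R^{(n)}_D$.

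Transversality then reduces to inspecting the defining equations: $\widetilde V_\emptyset\cap\widetilde V_{\{1,\ldots,n-1\}}=\{x_i=y_i=z_i=0\}$ coincides with $B^1(\Gamma;\Ad\rho_D)$, which is exactly the tangent space to the orbit $O(\rho_D)$, and the count $(n^2-1)+(n^2+n-2)-(n^2-n)=n^2+2n-3=\dim Z^1(\Gamma;\Ad\rho_D)$ shows that the two tangent spaces together span $T_{\rho_D}R_n(\Gamma)$. The main subtlety is in the second step, where one must combine the purity of the tangent cone of an irreducible variety with the observation that irreducibility confines a subvariety of $\bigcup\widetilde V_\iota$ to a single $\widetilde V_\iota$; the rest of the argument is essentially a bookkeeping exercise in the quadratic-cone coordinates.
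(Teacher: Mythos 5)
Your proposal is correct and follows essentially the same route as the paper: identify $T_{\rho_D}R_{ab}$ with $\widetilde V_\emptyset$ by a dimension count, trap the equidimensional tangent cone of $R^{(n)}_D$ inside the unique top-dimensional piece $\widetilde V_{\{1,\ldots,n-1\}}$ of the quadratic cone, and then read off smoothness and transversality. You merely make explicit two steps the paper leaves implicit — the direct differentiation of abelian paths $A_s^{h(\gamma)}$ to see $T_{\rho_D}R_{ab}\subseteq\widetilde V_\emptyset$, and the observation that an irreducible subvariety of a finite union of linear subspaces must lie in a single one — but the underlying structure of the argument is the same.
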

\begin{proof}
The irreducible algebraic component $R_{ab}$  has dimension $n^2-1$, and is isomorphic to $\SLn$.
Hence  $\rho_D\in R_{ab}$ is a smooth point. A tangent vector in $T_{\rho_D} R_{ab}$ is a linear combination of $\{H_1,\ldots, H_{n-1}\}$ and $\{B_k^l\}$ therefore
$T_{\rho_D} R_{ab}$ corresponds to a subspace of~$\widetilde V_\emptyset$.
Now, $T_{\rho_D} R_{ab} = \widetilde V_\emptyset$ follows since
\[
\dim \widetilde V_\emptyset = n^2-1 = \dim T_{\rho_D} R_{ab}\,.
\]

By Lemma~\ref{lem:upper-triang-exist},  the representation $\rho_D$ is contained in the closure of the orbit of $\rho^{(n)}_D$ and hence is contained in the component $R^{(n)}_D$ of maximal dimension $n^2+n-2$. 

The tangent cone $TC_{\rho_D} R^{(n)}_D$ has only components of dimension $(n^2+n-2)$, and is contained in
the quadratic cone $TQ_{\rho_D} R^{(n)}_D$. 
On the other hand,  $TQ_{\rho_D} R^{(n)}_D$ is contained in $TQ_{\rho_D} R_n(\Gamma)$, and therefore 
$TC_{\rho_D} R^{(n)}_D$ is contained in $\widetilde V_{\{1,\ldots,n-1\}}$ since this  is the unique 
$(n^2+n-2)$-dimensional component  of maximal dimension of $TQ_{\rho_D} R_n(\Gamma)$ (see Lemma~\ref{prop:quad_cone}). Hence
$TC_{\rho_D} R^{(n)}_D= \widetilde V_{\{1,\ldots,n-1\}}$ is a linear subspace.
This implies that the tangent cone $TC_{\rho_D}R^{(n)}_D$ is an affine space
hence it coincides with the tangent space $TC_{\rho_D} R^{(n)}_D$.
It follows that $\rho_D\in R^{(n)}_D$ is a smooth point.
Finally, for $\widetilde V_\emptyset$ and $\widetilde V_{\{1,\ldots,n-1\}}$ we have
\begin{align*}
 \dim (\widetilde V_{\{1,\ldots,n-1\}}+ \widetilde V_\emptyset) &= \dim(\widetilde V_{\{1,\ldots,n-1\}}) + \dim(\widetilde V_\emptyset) - \dim B^1(\Gamma;\Ad\rho)\\
 & = n^2+n-2 + n^2 -1 - (n^2 -n)\\
 & = n^2 + 2n -3 = \dim T_{\rho_D} R_n(\Gamma)\,.\qedhere
\end{align*}

\end{proof}

\subsection{The tangent cone at the diagonal representation}
\label{sec:tangent_cone}

In this section, we will prove that every cocycle in the quadratic cone at ${\rho_D}$ is an integrable cocycle. More precisely, we construct an arc of representations with endpoint ${\rho_D}$ and tangent to a given cocycle in the quadratic cone.

\begin{thm}\label{thm:tangent-cone}
Every component $\widetilde V_\iota$ is contained in the tangent cone $TC_{\rho_D}R_n(\Gamma)$.
Therefore, the tangent cone $TC_{\rho_D}R_n(\Gamma)$ and quadratic cone 
$TQ_{\rho_D}R_n(\Gamma)$ coincide. Hence the tangent cone $TC_{\rho_D}R_n(\Gamma)$ is reduced, and consequently  the diagonal representation $\rho_D$ is a reduced point of the representation variety.
\end{thm}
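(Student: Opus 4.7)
The plan is to prove the inclusion $\widetilde V_\iota\subset TC_{\rho_D}R_n(\Gamma)$ for each $\iota\in 2^{n-1}$ separately. Combined with the general containment $TC_{\rho_D}R_n(\Gamma)\subset TQ_{\rho_D}R_n(\Gamma)$ and Proposition~\ref{prop:quad_cone}, this yields the equality $TC_{\rho_D}R_n(\Gamma)=TQ_{\rho_D}R_n(\Gamma)$; the reducedness of the quadratic cone, already established in Proposition~\ref{prop:quad_cone}, then transfers to the tangent cone, and Lemma~\ref{lem:scheme2} upgrades this to reducedness of the local ring at $\rho_D$, so that $\rho_D$ becomes a reduced point of $R_n(\Gamma)$. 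The two extreme cases $\iota=\emptyset$ and $\iota=\{1,\ldots,n-1\}$ are already handled in Proposition~\ref{prop:R^n}, so only intermediate $\iota$ require attention: for each such $\iota$ and each $U\in\widetilde V_\iota$, the task is to produce an analytic arc of representations through $\rho_D$ with velocity $U$ at $t=0$.

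I would build such arcs by block-diagonal concatenation. Writing $\iota^c=\{j_1<\cdots<j_m\}$, partition $\{1,\ldots,n\}$ into consecutive blocks $B_0,\ldots,B_m$ of sizes $k_s=|B_s|$, obtained by cutting between positions $j_s$ and $j_s+1$. For each block $B_s$ with $k_s\geq 2$, the restricted Alexander data satisfy the hypotheses of Theorem~\ref{thm:upper-triang-deform}: the interior superdiagonal ratios $\lambda_i/\lambda_{i+1}$, all indexed by $i\in\iota$ with $i,i+1\in B_s$, are simple roots of $\Delta_K$, while $\Delta_K(\lambda_i/\lambda_j)\neq 0$ for $|i-j|\geq 2$ is a global hypothesis. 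Hence inside $\mathrm{GL}(k_s,\C)$ there exists a triangular representation $\rho^{(k_s)}_{D_s}$ of the form \eqref{eq:SolvableRep}, and, by the argument of Section~\ref{sec:irred}, an analytic family of within-block representations absorbing every cocycle $U_i^{\pm}$ and $H_i$ for $i$ interior to $B_s$.

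Given $U=\sum_{i\in\iota}(x_iU_i^++y_iU_i^-)+\sum_i z_iH_i+C\in\widetilde V_\iota$, the within-block parameters $(x_i,y_i)$ with $i\in\iota$, $i,i+1\in B_s$ feed the block families. The constraint $2z_i=z_{i-1}+z_{i+1}$ for $i\in\iota$ forces the sequence $(z_i)$, with boundary values $z_0=z_n=0$, to be affine on each block, leaving the $m$ values $z_{j_1},\ldots,z_{j_m}$ free; these correspond precisely to the $m$ independent scalar rescalings of the $m+1$ blocks compatible with the global $\SLn$ normalization $\prod_s\det\rho_{s,t}\equiv 1$. Exponentiating the within-block families, twisting by the appropriate scalar factors to account for these $m$ parameters, and finally conjugating by an analytic one-parameter subgroup realizing the principal derivation $C$ yields the required analytic arc $\rho_t$ in $R_n(\Gamma)$ with $\rho_0=\rho_D$ and velocity $U$.

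The main obstacle is the combinatorial and determinantal bookkeeping across block boundaries: one must verify that the affine constraint on the $z_i$'s aligns exactly with the scalar-scaling needed to keep $\det\rho_t\equiv 1$ once the block-wise $\mathrm{GL}$-representations are assembled into an $\SLn$-representation, and that the tangent of the resulting arc correctly recovers the $H_i$-components for the boundary indices $i\in\iota^c$. This is a direct but delicate calculation using the structure of the within-block cocycles as in Lemma~\ref{lem:tgt-space-variety} and the cup-product formulas in Lemma~\ref{lem:HcupU}; it precisely mirrors the computation that produced the identity $\frac{1}{2}[U\smallcup U]\sim \sum_i(2z_i-z_{i-1}-z_{i+1})(x_i[H_i\smallcup U_i^+]+y_i[H_i\smallcup U_i^-])$ in Proposition~\ref{lem:quad_eq}, so that the linear form $2z_i-z_{i-1}-z_{i+1}$ vanishing on $\iota$ is exactly the condition needed for the block assembly to produce a bona fide arc in $R_n(\Gamma)$ rather than only a formal second-order tangent.
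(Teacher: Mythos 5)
Your proposal is correct and follows essentially the same route as the paper: decompose $\{1,\ldots,n\}$ into blocks determined by $\iota^c$, apply Theorem~\ref{thm:upper-triang-deform} to each block $D_s$ to obtain smooth block families, account for the remaining diagonal freedom (your scalar rescalings are the paper's cocycles $F_{n_s}$, which encode the same affine constraint $2z_i = z_{i-1}+z_{i+1}$ on $\iota$), and assemble the block arcs into an arc in $R_n(\Gamma)$. The paper makes the determinantal bookkeeping you flag explicit by writing $D = z_1 D_1\oplus\cdots\oplus z_{l+1}D_{l+1}$ with $D_s\in\SLn[n_s]$ and setting $\rho_t^{(U)}(\gamma)=\bigoplus_s z_s^{h(\gamma)}\rho_t^{(U_s)}(\gamma)$, then multiplying by a commuting diagonal family $\rho_t^{(F)}$, which closes the gap you leave open.
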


In what follows we let $A\oplus B$ denote the direct sum of the matrices $A$ and $B$:
\[
 A\oplus B =
 \left(
 \begin{array}{c|c}
  A & \mathbf{0}\\
  \hline
  \mathbf{0} &B
 \end{array}
 \right)
\,.
\]

\begin{example}[The case $n=3$]\label{exe:n=3}
Before treating the general case, we shall consider the case $n=3$ i.e.\ 
 $D=\operatorname{diag}(\lambda_1,\lambda_2,\lambda_3)$. 
 
 As in Theorem~\ref{thm:upper-triang-deform} we suppose that $\lambda_1/\lambda_2$ and $\lambda_2/\lambda_3$ are simple roots of the Alexander polynomial $\Delta_K$, and that $\lambda_1/\lambda_3$ is not a root of $\Delta_K$.
 By Proposition~\ref{prop:quad_cone} the quadratic cone $TQ_D R_3(\Gamma)$ has four components
 $\widetilde V_\emptyset$, $\widetilde V_{\{1\}}$, $\widetilde V_{\{2\}}$ and $\widetilde V_{\{1,2\}}$. 
 Let us consider these components :
 \begin{description}
  \item[$\widetilde V_\emptyset$] The component $\widetilde V_\emptyset$ is $8$-dimensional. Up to principal derivations the vector space $\widetilde V_\emptyset$ is  generated by the two cocycles $H_1=h(E_1^1-E_2^2)$ and $H_2=h(E_2^2-E_3^3)$. Each linear combination of these cocycles is integrable and
   \[
   \rho^{(c_1H_1+c_2H_2)}_t(\gamma) = \exp\big(t(c_1H_1+c_2H_2)(\gamma)\big)\rho_D(\gamma)
  \]
 is clearly an arc of representations as diagonal matrices commute.
 
  \item[$\widetilde V_{\{1,2\}}$] This component is a $10$-dimensional vector space.
  Up to principal derivations it is generated by the four cocycles $U^{\pm}_k$ for $k=1,2$. By Proposition~\ref{prop:R^n} the representation $\rho_D$ is a smooth point of the component $R^{(3)}_D$, and $\widetilde V_{\{1,2\}}$ is the tangent space $T_{\rho_D} R^{(3)}_D$. Hence every vector in $\widetilde V_{\{1,2\}}$ is integrable.
  
  \item[$\widetilde V_{\{1\}}$] This component is a $9$-dimensional vector space which is generated, up to coboundaries,    
  by the three cocycles 
  $H:=H_1+2H_2= \Big(\begin{smallmatrix} 
   h & 0 & 0 \\
   0 &  h & 0\\
   0 & 0 &-2h
   \end{smallmatrix}\Big)$ 
  and by $U^{\pm}_1$. In order to show that every cocycle in $\widetilde V_{\{1\}}$ is integrable we have to show that the linear combination 
  \[
   c H + d^+ U_1^+ + d^-U_1^- =
   \begin{pmatrix} 
   ch & d^+ u_1^+ & 0 \\
   d^- u_1^- & c h & 0\\
   0 & 0 &-c2h
   \end{pmatrix}\,,\text{ $c, d^+, d^-\in\C$, }
  \]
  is integrable. 
  
   We will see that the cocycle 
  \[
   U =
   \begin{pmatrix} 
   0 & d^+ u_1^+ & 0 \\
   d^- u_1^- & 0 & 0\\
   0 & 0 &0
   \end{pmatrix}
   \]
   is integrable in a specific way.
  To this end, we choose a complex number $z_1$ such that $z_1^2=\lambda_1\lambda_2$, and we consider the decomposition of the matrix $D = z_1 D_1 \oplus z_2 D_2$ where 
  $D_1 = \operatorname{diga}(\lambda_1/z_1,\lambda_2/z_1)\in\SLn[2]$, $z_2 = \lambda_3$, and $D_2=(1)\in\SLn[1]$.
  Now, $\rho_{D_1}\co\Gamma\to\SLn[2]$ is a representation which verifies the hypothesis of Theorem~\ref{thm:main1}, and 
  the cocycle 
  $U_1 := \big(\begin{smallmatrix} 0 & d^+ u_1^+ \\ d^- u_1^- & 0\end{smallmatrix}\big)$ is integrable i.e. there exist an analytic path of representations $\rho^{(U_1)}_{t}\co\Gamma\to\SLn[2]$ such that $\rho^{(U_1)}_{0} = \rho_{D_1}$, and 
  \[
   \frac{d }{dt} \rho^{(U_1)}_t(\gamma) \big|_{t=0}\,
   \rho_{D_1}(\gamma)^{-1} = U_1(\gamma) = 
   \begin{pmatrix} 
   0 & d^+ u_1^+(\gamma) \\ 
   d^- u_1^-(\gamma) & 0
   \end{pmatrix}
  \]
  for all $\gamma\in\Gamma$.
  Finally we define a path of representations $\rho^{(cH+U)}_{t}\co\Gamma\to\SLn[3]$ by
  \[
   \rho^{(cH+U)}_{t} (\gamma) = \exp\big(tcH(\gamma)\big)
   \big(
   z_1^{h(\gamma)} \rho^{(2)}_t(\gamma) \oplus z_2^{h(\gamma)}
   \big) \,.
  \]
  Notice that $\rho^{(cH+U)}_{t}$ is a representation since the block matrices commute. Moreover,
  \[
   \rho^{(cH+U)}_{0} (\gamma) = 
  I_3 \big(
   z_1^{h(\gamma)} \rho_{D_1}(\gamma) \oplus z_2^{h(\gamma)}
   \big)  = \begin{pmatrix} z_1
                    \begin{pmatrix} 
                    \lambda_1/z_1 & 0 \\
                    0 & \lambda_2/z_1
                    \end{pmatrix} & \begin{matrix} 0 \\ 0 \end{matrix} \\
                0 \qquad 0  & z_2
               \end{pmatrix}^{h(\gamma)} = \rho_D(\gamma)\,,
  \]
  and 
  \[
  \frac{d }{dt} \rho^{(cH+U)}_t(\gamma) \big|_{t=0}\,
   \rho_{D}(\gamma)^{-1} = \Big(
 c H(\gamma) + \big(U_1(\gamma) \oplus 0 \big)\Big)   
   = (c H + U)(\gamma)\,.  
  \]                                                                  
  \item[$\widetilde V_{\{2\}}$] This case is similar to the previous case.
  \end{description}
\end{example}

In order to describe the general case we need some notations.
We describe the cocycles in $\widetilde V_\iota$ for a proper subset
$\iota\in 2^{n-1}$, $\iota\neq\emptyset$, $\iota\neq\{1,\ldots,n-1\}$. For such   $\iota$ we let denote
$\iota^\mathsf{c}=\{j_1,\ldots,j_l\}\subset\{1,\ldots,n-1\}$ where 
$0=j_0< j_1< j_2<\cdots< j_l <j_{l+1} = n$, and
we define $n_s := j_s - j_{s-1}$, $s=1,\ldots,l+1$.
Notice that $\sum_{s=1}^{l+1} n_s = n$.

Now, equations~\eqref{eq:V_iota} give that for each $i\in\iota$ we have $U_i^\pm\in \widetilde V_\iota$,
and for $s=1,\ldots,l$ we have $F_{n_s} \in \widetilde V_\iota$ where $F_{n_s}$ denotes the diagonal matrix
\[
F_{n_s} = \operatorname{diag}(\underbrace{0,\ldots,0}_{n_1+\cdots+n_{s-1}},
\underbrace{ n_{l+1} h }_{n_s}, \underbrace{0,\ldots,0}_{n_{s+1}+\cdots+n_{l}},
\underbrace{-n_s h}_{n_{l+1}})\,.
\]

Therefore,
\[
\widetilde V_\iota = \langle U_i^\pm,\ F_{n_s} \mid i\in\iota,\ s=1,\ldots,l \rangle \oplus B^1(\Gamma;{\Ad\rho_D})\,.
\]
Notice that we have, for all $U^\pm_i$ with $i\in\iota$ and all
$F_{n_s}$, that
$ F_{n_s}(\gamma)$ and $U_i^\pm(\gamma)$ commute for all $\gamma\in\Gamma$.
Then a cocycle $U\in \widetilde V_\iota$ can be written as a linear combination of 
the diagonal matrices $F_{n_s}$, and
block matrices:
\begin{equation}\label{eq:dec_U}
U=\begin{pmatrix}
U_{n_1}&&\\
0&\ddots&0\\
&&U_{n_{l+1}}
\end{pmatrix}=
\bigoplus_{s=1}^{l+1} U_{n_s}
\end{equation}

where for all $1\leq s\leq l+1$:
\[
U_{n_s}=\begin{pmatrix}
0&u^+_{j_{s-1}+1}&0&&\ldots&0\\
u^-_{j_{s-1}+1}&0&u^+_{j_{s-1}+2}&&\ddots&\vdots\\
0&u^-_{j_{s-1}+2}&\ddots&&\ddots&0\\
\vdots&\ddots&\ddots&&\ddots&u^+_{j_{s}-1}\\
0&\ldots&0&&u^-_{j_{s}-1}&0
\end{pmatrix}\,.
\]
Here,
$ u_k^+\in Z^1(\Gamma;\C_{\lambda_k/\lambda_{k+1}})$ and $u_{k}^-\in Z^1(\Gamma;\C_{\lambda_{k+1}/\lambda_k})$ where $j_{s-1}+1\leq k\leq j_s-1$.

\begin{proof}[Proof of Theorem~\ref{thm:tangent-cone}]%
The cases $\iota=\emptyset$, and $\iota=\{1,\ldots,n-1\}$ were treated in Proposition~\ref{prop:R^n}.
So from now on we will suppose that $\iota$ is a proper subset of $2^{n-1}$.

First we notice that every diagonal cocycle is integrable, and for $F=\sum_{s=1}^l c_s F_{n_s}\in \widetilde V_\iota$ we define
\[
 \rho_t^{(F)}(\gamma) = \exp\big(t F(\gamma) \big)\,.
\]

For the non diagonal cocycles we proceed as in Example~\ref{exe:n=3}.
Let $U\in \widetilde V_\iota$ be a given cocycle.
We will use the decomposition \eqref{eq:dec_U}
\[
U = \bigoplus_{s=1}^{l+1} U_{n_s}\, ,
\]
to split up a diagonal matrix 
$D=D(\lambda_1,\ldots\lambda_n)\in\SLn$ accordingly as $D = \tilde D_1\oplus\cdots\oplus \tilde D_{l+1}$, where
$\tilde D_s = D(\lambda_{j_{s-1}+1},\ldots, \lambda_{j_{s}})$ is build from $n_s$ consecutive diagonal entries of $D$. We choose $z_s\in\C^*$ such that $z_s^{n_s} = \det (\tilde D_s)$, and we define
$D_s : = z_s^{-1} \, \tilde D_s \in \SLn[n_s]$. We obtain 
\[
 D = z_1 D_1 \oplus\cdots\oplus z_{l+1} D_{l+1}\,.
\]
Now, $\rho_{D_s}$ verifies the hypotheses of Lemma~\ref{lem:upper-triang-exist} and
Theorem~\ref{thm:upper-triang-deform}, and therefore there is a $(n_s^2+n_s-2)$-dimensional irreducible component $R^{(n_s)}_{D_s}\subset R_{n_s}(\Gamma)$.
By Lemma~\ref{lem:tgt-space-variety} we have
\[
 T_{\rho_{D_s}}R^{(n_s)}_{D_s} = \langle U^\pm_{l}\ |\ j_{s-1}+1\leq l\leq j_{s}-1\rangle
 \oplus B^1(\Gamma;{\Ad\rho_{D_s}})\,.
\]

Notice that in the case $n_s = 1$ we have $D_s = (1)\in\SLn[1]=\{(1)\}$ is the trivial group and $R^{(1)}_{D_s}$ is zero-dimensional and contains only the trivial representation.

We define representations $\rho_t^{(U)}\co\Gamma\to\SLn$ by
\[
 \rho_t^{(U)}(\gamma) = \bigoplus_{s=1}^{l+1} z_s^{h(\gamma)} 
 \rho^{(U_{n_s})}_t(\gamma)
\]
where $\rho^{(U_{n_s})}_t\co \Gamma\to \SLn[n_s]$ is an arc of representations such that
\[
 \frac{d}{dt} \rho^{(U_{n_s})}_t(\gamma)\big|_{t=0}\cdot \rho_{D_{s}}(\gamma)^{-1} = U_{n_s}(\gamma)\,.
\]
Notice that $\rho_0^{(U)} = \rho_D$, and it follows directly that
\[
  \frac{d}{dt} \rho^{(U)}_t(\gamma)\big|_{t=0}\cdot \rho_{D}(\gamma)^{-1} = U(\gamma)\,.
\]

Finally notice that the matrices $\rho_t^{(U)}(\gamma)$ and $\rho_t^{(F)}(\gamma)$ commute for all $F$ and $U$ in $\widetilde V_\iota$.
Therefore, $\rho_t^{(F+U)}\co\Gamma\to\SLn$ given by
\[
 \rho_t^{(F+U)}(\gamma) = \rho_t^{(F)}(\gamma)\cdot \rho_t^{(U)}(\gamma)
\]
is a path of representation, and its derivative in $t=0$ gives the cocyle $F+U \in \widetilde V_\iota$.

The last assertion is an immediate consequence of Proposition~\ref{prop:quad_cone} and Lemma~\ref{lem:scheme2}.
\end{proof}

\begin{remark}\label{rem:red_rep}
Notice that for a proper subset $\iota$  of $2^{n-1}$ all the representations $\rho_t^{(F+U)}$ are included in $R^\mathit{red}_n(\Gamma)$ the closed subscheme of reducible representations (\cite[Lemma~7.7]{Heusener-Porti2015}).
\end{remark}

\begin{thm}\label{thm:components}
For each $k$, $0\leq k \leq n-1$, there is at least one
irreducible algebraic component of $R_n(\Gamma)$ of dimension $n^2-1+k$ which contains $\rho_D$. More precisely:
\begin{enumerate}
    \item\label{item1} The component $R_D^{(n)}$ is the unique component which contains $\rho_D$ and is of dimension $(n^2+n-2)$.
    \item\label{item2} The component $R_n(\Z)$ is the unique component which contains $\rho_D$ and is of dimension $n^2-1$.
    \item\label{item3} The component $R_D^{(n)}$ is the unique component that contains $\rho_D$ and irreducible representations.
       
\end{enumerate}
\end{thm}

\begin{proof}
In the proof  we will make use of the following fact which follows directly from the definitions: if $V$ and $W$ are two subschemes of $\mathbb{A}^N$, and $p\in V\cap W$ is a point then
 \begin{equation}\label{eq:intersectionOfCones}
 TC_p(V) \cup  TC_p(W)\subset TC_p(V\cup W)\,.
 \end{equation}

To prove (\ref{item1}) we let $\widetilde R_{D}$ denote an $(n^2+n-2)$-dimensional algebraic component of $R_n(\Gamma)$ which contains $\rho_D$.
   The tangent cone $TC_{\rho_D} \widetilde R_{D}$ has only components of dimension $(n^2+n-2)$, and is contained in
the quadratic cone $TQ_{\rho_D} \widetilde R_{D}$ 
which is contained in $TQ_{\rho_D} R_n(\Gamma)$. Therefore 
$TC_{\rho_D} \widetilde R_{D}$ is contained in $\widetilde V_{\{1,\ldots,n-1\}}$ since this  is the unique 
$(n^2+n-2)$-dimensional component of maximal dimension of $TQ_{\rho_D} R_n(\Gamma)$ (see Lemma~\ref{prop:quad_cone}). Hence
$TC_{\rho_D} \widetilde R_{D}= \widetilde V_{\{1,\ldots,n-1\}}$ is a linear subspace.
This implies that the tangent space $T_{\rho_D} \widetilde R_{D}$ coincides with the tangent cone $TC_{\rho_D} \widetilde R_{D}$, and $\rho_D\in \widetilde R_{D}$ is a smooth point.    
 On the other hand,
      \[
\widetilde V_{\{1,\ldots,n-1\}}=TC_{\rho_D}\widetilde R_{D}\cup TC_{\rho_D}R_{D}^{(n)}
\subset TC_{\rho_D}(\widetilde R_{D}\cup R_{D}^{(n)})\subset \widetilde V_{\{1,\ldots,n-1\}}\;.
            \]
The last inclusion is consequence of the fact that the tangent cone  $TC_{\rho_D}R_n(\Gamma)$ contains only one component of maximal dimension $n^2+n-2$.
Thus, we obtain equality and $\rho_D$ is a smooth point of $\widetilde R_{D}\cup R_{D}^{(n)}$. This is possible only if 
$\widetilde R_{D}=R_{D}^{(n)}$.

The proof of (\ref{item2}) is similar.

The proof of  (\ref{item3}) is a consequence of Lemma \ref{lem:reducedpoints}. More precisely, suppose that $V$ is a component of $R_n(\Gamma)$ that contains $\rho_D$ and an irreducible representation $\varrho$. If $\dim V< n^2+n-2$ we will obtain a contradiction by Lemma \ref{lem:reducedpoints} since $H^0(\Gamma;\Ad\varrho) = 0$. Thus $\dim V\geq n^2+n-2$. As $R_{D}^{(n)}$ is the only component of $R_n(\Gamma)$ that contains $\rho_D$ and is of maximal dimension, we conclude that $V=R_{D}^{(n)}$.

\end{proof}

\section{Examples}\label{sec:examples}
In this paragraph we discuss and illustrate by some examples the significance of our results in light of what is already known in the literature.

\subsection{Number of components}\label{sec:number_components}
 For each $k$, $0\leq k \leq n-1$, there is at least one
irreducible algebraic component of $R_n(\Gamma)$ of dimension $n^2-1+k$ which contains $\rho_D$. 
So there are at least~$n$ irreducible components of $R_n(\Gamma)$ which contain 
$\rho_D$. An example is the torus knot $T(3,2)$, and $n=3$. The representation variety $R_3(\Gamma_{3,2})$ has exactly three algebraic components  (see \cite[Theorem~1.1]{Munoz-Porti2016}). All the components contain the diagonal representation $\rho_D$ where $D= \operatorname{diag}( \exp(\pi/3), 1, \exp(-\pi/3) )$ (see Subsection~\ref{example:T(3,2)}).

By Theorem~\ref{thm:components}, there exist a unique component of maximal dimension $n^2+n-2$ which contains $\rho_D$.
We conjecture that there are at most $\binom{n-1}{k}$ components of dimension $n^2-1+k$ which contain $\rho_D$. 
Hence we expect that are at most $2^{n-1}$ components through $\rho_D$.
An example is given by the torus knot $T(3,4)$, and $n=3$. The representation variety $R_3(\Gamma_{3,4})$ has exactly two algebraic components of dimension $9$ \cite[Theorem~1.1]{Munoz-Porti2016}. With help of \cite[Proposition~9.1]{Munoz-Porti2016} it is possible to prove that the diagonal representation of $\Gamma_{3,4}$ which maps the meridian to
$D=\operatorname{diag}(\xi^4 , \xi ,\xi^{-5})$ for $\xi=\exp(i\pi/18 )$ is contained in both components. Moreover, the representation $\rho_D$ verifies the hypothesis of Theorem~\ref{thm:upper-triang-deform}, and therefore $\rho_D$ is contained in $4$ components. (See the discussion in Subsection~\ref{example:T(3,4)}.)

\subsection{Knots with quadratic Alexander polynomial}
For $\SLn[3]$ and knots with a quadratic Alexander polynomial, Theorem~\ref{thm:upper-triang-deform} follows from the results in \cite{Heusener-Medjerab2014}. More precisely, up to an action of the center $\{I_3,\omega I_3, \omega^2 I_3\} $ of $\SLn[3]$, every diagonal representation which verifies the conditions of Theorem~\ref{thm:upper-triang-deform} is the composition of a diagonal representation in $\SLn[2]$ with the canonical representation $r_3\co\SLn[2]\to\SLn[3]$. To see this, let $K$ be such a knot with a root $\alpha$, and let $\rho_D$ be an abelian representation of $K$ which maps the meridian to 
\[
\begin{pmatrix}
\lambda_1 & 0&0\\
0&\lambda_2&0\\
0 &0& \lambda_3
\end{pmatrix}\in\SLn[3]
\] with $\lambda_i,\ 1\leq i\leq 3$ pairwise distinct. Then 
\[
\begin{cases}
\lambda_1\lambda_2\lambda_3=1\\
\lambda_1/\lambda_2=\alpha^{\epsilon_1};\ \epsilon_1=\pm1\\
\lambda_2/\lambda_3=\alpha^{\epsilon_2};\ \epsilon_2=\pm1
\end{cases}
\]
We obtain that $\lambda_1=\lambda_3\alpha^{\epsilon_1+\epsilon_2}$, $\lambda_2=\lambda_3\alpha^{\epsilon_2}$, $\epsilon_1,\epsilon_2=\pm1$ and $\lambda_3^3\alpha^{\epsilon_1+2\epsilon_2}=1$. The condition $\lambda_1\not=\lambda_3$ implies that $\epsilon_1=\epsilon_2$ and $\lambda_3=\alpha^{-\epsilon_1}\omega^k$ with $\omega$ a primitive 3rd root of unity. As a consequence 
\[
\rho_D(\mu)=\omega^k
\begin{pmatrix}
\alpha^{\epsilon_1} & 0&0\\
0&1&0\\
0 &0& \alpha^{-\epsilon_1}
\end{pmatrix} =
\omega^k\,
r_3 \begin{pmatrix}
\alpha^{\epsilon_1/2} & 0\\
0& \alpha^{-\epsilon_1/2}
\end{pmatrix}\,.
\]
This proves the claim.

\subsection{The torus knot $T(3,4)$}\label{example:T(3,4)}
On the other hand, if the Alexander polynomial is not quadratic, there might be deformations which can not be detected by the results in \cite{Heusener-Medjerab2014}. To see this let us consider the torus knot $T(3,4)$ which is the knot $8_{19}$ in the knot tables. The knot group is $\Gamma_{3,4} = \langle a,b \mid a^3 = b^4\rangle$, a meridian is given by $m= ab^{-1}$, and the Alexander polynomial is $\Delta_{3,4}(t) = (1 - t + t^2)  (1 - t^2 + t^4)$ is the product of two cyclotomic polynomials $\phi_6(t)$ and $\phi_{12}(t)$. The abelianization $h\co\Gamma_{3,4}\to\langle t\mid -\rangle$ is given by
$h(a) = t^4$ and $h(b) =t^3$. 
The abelian  representation of $\Gamma_{3,4}$ which maps the meridian to
\footnotesize
\[
 \begin{pmatrix} \lambda_1 & 0 &0 \\ 0 & \lambda_2 & 0\\ 0 & 0 &\lambda_3\end{pmatrix}
=  \begin{pmatrix} \xi^4 & 0 &0 \\ 0 & \xi & 0\\ 0 & 0 &\xi^{-5}\end{pmatrix} 
\]
\normalsize
for $\xi=\exp(i\pi/18 )$
verifies the hypothesis of Theorem~\ref{thm:upper-triang-deform}: 
$\lambda_1/\lambda_2 =\xi^3$ is a primitive 12-th root of unity, $\lambda_2/\lambda_3 =\xi^6$
is a primitive 6-th root of unity, and $\lambda_1/\lambda_3 =\xi^9$  is not a root of $\Delta_{3,4}(t)$. 
Therefore by Theorem~\ref{thm:upper-triang-deform} the abelian representation admits irreducible deformations, and is contained in a 10-dimensional component of $R_3(\Gamma_{3,4})$. These deformations can not be detected by the results in \cite{Heusener-Medjerab2014}.

Finally there are deformations of diagonal representations which are not detected by Theorem~\ref{thm:upper-triang-deform}.
The $\SLn[2]$-character variety of $\Gamma_{3,4}$ has four one-dimensional components, three of these components $V_1$, $V_2$ and  $V_3$ contain characters of irreducible representations and one component contains only characters of abelian (reducible) representations. Each component $V_i$ is parametrized by the trace of the meridian, and contains exactly two characters of abelian representations which are limits of irreducible representations (see for example \cite{Klassen1993}).
In order to obtain irreducible $\SLn[3]$ representations of $\Gamma_{3,4}$, we can compose an irreducible  $\SLn[2]$-representation with the canonical representation $r_3\co\SLn[2]\to\SLn[3]$ (see Proposition~3.1 in \cite{Heusener-Medjerab2014}).
In this way we obtain $6$ diagonal representations which are limit of irreducible representations. 
For example, let $\rho_{ab}\co\Gamma_{3,4}\to\SLn[2]$ be given by 
\[
 a\mapsto A = \begin{pmatrix} \eta & 0 \\ 0 & \eta^{-1}\end{pmatrix}\text{ and }
 b\mapsto B = \begin{pmatrix} \zeta & 0 \\ 0 & \zeta^-1\end{pmatrix}
\]
where $ \eta = \exp(i\pi/3)$ is a primitive 6-th root of unity, and 
$\zeta =\exp(i\pi/4)$ is a primitive 8-th root of unity. Then $\eta\zeta^{-1}$ is a primitive 24-th root of unity, and
$r_3\circ\rho_{ab}$ maps the meridian $m=ab^{-1}$ to the diagonal matrix
\footnotesize
\[
 \begin{pmatrix} \lambda_1 & 0 &0 \\ 0 & \lambda_2 & 0\\ 0 & 0 &\lambda_3\end{pmatrix}
=  \begin{pmatrix} (\eta\zeta^{-1})^2 & 0 &0 \\ 0 & 1 & 0\\ 0 & 0 &(\eta\zeta^{-1})^{-2}\end{pmatrix}\,.
\]
\normalsize
Now, $\lambda_1/\lambda_2=\exp(i\pi/6)$ and 
$\lambda_2/\lambda_3 = \lambda_1$ are primitive 12-th roots of unity, and $\lambda_1/\lambda_3 = \lambda_1^2$ is a primitive 6-th root of unity. 
Therefore, all three quotients are roots of $\Delta_{3,4}$, but the deformations of this abelian representation are not detected by Theorem~\ref{thm:upper-triang-deform}. On the other hand, Proposition~3.1 in  \cite{Heusener-Medjerab2014} shows that the representation $r_3\circ\rho_{ab}$ admits irreducible deformations. Moreover, it is possible to prove that the representation $\rho_{ab}$ is contained in a 12-dimensional component of the $\SLn[3]$-representation space $R_3(\Gamma_{3,4})$ (see 
\cite[Proposition 8.2]{Munoz-Porti2016}).

\subsection{The trefoil knot}\label{example:T(3,2)}
By Proposition~\ref{prop:R^n} we know that the representation $\rho_D$ is a smooth point of the two components $R^{(n)}_D$ and $R_{ab}$. For other components this might be not the case.

This situation occurs already for the trefoil knot and representations of its knot group 
$\Gamma$ to $\SLn[3]$. 
It follows from work of J.\ Porti and V.\ Mu\~noz that $R_3(\Gamma)$ has three algebraic components. One component of irreducible representations, one component of totally reducible representations, and one component of partially reducible representations
\cite{Munoz-Porti2016}.

The primitive 6th root of unity $\eta=\exp(i\pi/3)$  is a simple root of the Alexander polynomial $\Delta(t)=t^2-t+1$. Moreover we choose $\lambda$ a primitive 12th root of unity such that $\lambda^2=\eta$.
It follows that
 $\rho_{D(\lambda,\lambda^{-1})}\in R_2(\Gamma)$ is contained in a $4$-dimensional component $R^{(2)}_D\subset R_2(\Gamma)$.

 If $n=3$ and $D=D(\lambda^2,1,\lambda^{-2})$, we obtain two families of partially reducible representations
 $\phi,\psi\co \C^*\times R^{(2)}_D \hookrightarrow R_3(\Gamma)$ given by
 \[
 \phi(s,\varrho) (\gamma) = \big( s^{h(\gamma)} \varrho(\gamma) \big) \oplus s^{-2h(\gamma)}
\quad \text{ and } \quad
 \psi(s,\varrho) (\gamma) = s^{2h(\gamma)} \oplus \big( s^{-h(\gamma)}\varrho(\gamma) \big)\,.
 \]
 The two families are contained in the same component of partially reducible representation of $R_3(\Gamma)$. 
 This component is $9$-dimensional. Notice also that
 $\phi(s,\varrho)$ and $\psi(s^{-1},\varrho)$ are conjugate in $R_3(\Gamma)$.
 
 Now observe that the image of $D\phi(\lambda,\rho_{D_1})$ is contained in $\widetilde V_1$ and that the image of $D\psi(\lambda,\rho_{D_1})$ is contained in $\widetilde V_2$. Hence the tangent space of $\rho_D$ at the variety of partially irreducible representations contains $\widetilde V_1 + \widetilde V_2$ which is $12$-dimensional.
 This implies that $\rho_D$ is a singular point of the component of partially reducible representations.

\section{Local structure of the character variety }\label{sec:character}

Our aim in this section is to study the local structure of the character variety in a neighborhood of the abelian character $\chi_D$. Most results in the literature concern the characters of irreducible representations. Very little is known in the case where the character corresponds to that of an abelian representation.

We will make use of the Luna's \'Etale Slice Theorem which is a tool for the local study of the character variety $X_n(\Gamma)$. 

The general references for this section are \cite{Luna1973}, \cite{Drzet2000LunasST}, \cite{Brion2010}, \cite{popov_vinberg_1994},
\cite{Leila02},  \cite{sikora_character_2012}, \cite{Lubotzky-Magid1985}.

\subsection{A slice  \'etale}\label{sec:slice}
Let $G$ be a reductive algebraic group acting on an affine variety $X$ and let $X\sslash G$ be the quotient variety. 
Let $t \co X \to X\sslash G$ be the quotient morphism.

The following result is of independant importance and appears implicitly in \cite{sikora_character_2012} under some other hypothesis.

\begin{thm}\label{thm:characterslice}
Let $G$ be a reductive algebraic group acting on an affine variety $X$.
Let  $x \in X$ be such that the orbit $O(x)$ is closed in $X$. 

Then there exist a locally closed  subvariety $\mathcal{S}$ of $X$ such that: 
\[
T_x^{Zar}X \cong T_x^{Zar}(O(x))\oplus T_x^{Zar}\mathcal{S}.
\]
Moreover, if $x$ is a smooth point of $X$ then
\[
T_{t(x)}^{Zar}(X\sslash G)\cong T_0\big(T_x^{Zar}\mathcal{S}\sslash G_x\big)
\]
where $G_x$ is the stabilizer of $x$ under the action of $G$. 
\end{thm}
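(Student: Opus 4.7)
The plan is to deduce both assertions from Luna's étale slice theorem. First I would exploit the hypothesis that the orbit $G(x)$ is closed: by Matsushima's criterion the isotropy group $G_x$ is then a reductive subgroup of $G$. This reductivity is exactly what is needed to construct a slice.

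For the construction of $\mathcal{S}$, the linear action of $G_x$ on $T_x^{Zar}X$ preserves the subspace $T_x^{Zar}(G(x))$, and reductivity of $G_x$ yields a $G_x$-invariant linear complement $N$, so that
\[
T_x^{Zar}X = T_x^{Zar}(G(x))\oplus N .
\]
Luna's theorem then produces a locally closed, $G_x$-stable subvariety $\mathcal{S}\subset X$ passing through $x$ with $T_x^{Zar}\mathcal{S}=N$, and such that the natural $G$-equivariant morphism
\[
\Phi\co G \times^{G_x}\mathcal{S} \longrightarrow X
\]
is strongly étale at $[e,x]$ onto a $G$-saturated open neighborhood of $x$. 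The first displayed isomorphism of the theorem is then the direct-sum decomposition used to define $\mathcal{S}$.

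For the second assertion, recall that a strongly étale morphism descends to an étale morphism of GIT quotients. Consequently the map $\mathcal{S}\sslash G_x \to X\sslash G$ induced by $\Phi$ is étale at the image of $x$. When $x$ is a smooth point of $X$ it is also a smooth point of $\mathcal{S}$, and a second application of Luna's theorem, applied now to the linear $G_x$-action on $T_x^{Zar}\mathcal{S}$, furnishes a $G_x$-equivariant étale morphism from a neighborhood of $x$ in $\mathcal{S}$ to a neighborhood of $0$ in $T_x^{Zar}\mathcal{S}$. Passing to $G_x$-quotients and composing the two étale morphisms gives an étale morphism between a neighborhood of $t(x)$ in $X\sslash G$ and a neighborhood of $0$ in $T_x^{Zar}\mathcal{S}\sslash G_x$. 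Since étale morphisms induce isomorphisms on Zariski tangent spaces, the announced identification follows.

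The main obstacle is to invoke Luna's theorem in precisely the form needed here, since classical statements are often formulated for a reductive group acting on a smooth affine variety. Here $X$ is only assumed smooth at $x$, and the smoothness hypothesis on $x$ is what permits the reduction to the linear model in the second step; one must verify carefully that the linearization of $\mathcal{S}$ is compatible with taking $G_x$-invariants so that the computation of $T_{t(x)}^{Zar}(X\sslash G)$ descends to that of $T_0(T_x^{Zar}\mathcal{S}\sslash G_x)$.
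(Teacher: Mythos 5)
Your proof is correct and follows essentially the same route as the paper: both invoke Luna's étale slice theorem to obtain the slice $\mathcal{S}$ with the strongly étale map $G\times_{G_x}\mathcal{S}\to X$, derive the tangent-space decomposition from it, and then apply Luna a second time (at the smooth point) to linearize $\mathcal{S}$ to $T_x^{Zar}\mathcal{S}$ via a strongly étale $G_x$-morphism, finally passing to the GIT quotients. The only differences are cosmetic: you make the reductivity of $G_x$ via Matsushima and the choice of $G_x$-invariant complement $N$ explicit, whereas the paper absorbs these into the citation of Luna's theorem, and you flag (without fully resolving) the point that Luna's theorem must be stated for affine varieties that are merely smooth at the point $x$, a subtlety the paper likewise leaves to the references.
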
 
\begin{proof}
Since the orbit $O(x)$ is closed in $X$, by Luna's \'Etale Slice theorem (\cite{Luna1973}, \cite[Theorem 5.3]{Drzet2000LunasST}), there exists a locally closed subvariety $\mathcal{S}$ of $X$ such that $\mathcal{S}$ is affine and contains $x$, $\mathcal{S}$ is $G_x$ invariant, the image of the $G$-morphism  
$\psi \co G \times_{G_x} \mathcal{S} \to X$ induced by the action of $G$ on $X$ is
a saturated open subset $U$ of $X$.
 Moreover, the restriction of $\psi \co G \times_{G_x} \mathcal{S} \to U$, and
the morphism $\psi\sslash G \co \mathcal{S}\sslash G_x \to U\sslash G$ are \'etale.
In particular, it means that we have the following isomorphisms:
\begin{equation}\label{eq:slice}
T_{(e,x)}^{Zar}\,\psi \co T_{(e,x)}^{Zar}\,G \times_{G_x} \mathcal{S}\stackrel{\cong}\longrightarrow T_x^{Zar} U\cong T_x^{Zar} X
\end{equation}
and
\begin{multline}\label{eq:qslice}
T_{[(e,x)]}^{Zar}\big(\psi\sslash G\big) \co T_{[(e,x)]}^{Zar}\big(G \times_{G_x} \mathcal{S}\sslash G\big)\cong T_{[x]}^{Zar} (S\sslash G_x)\\
\stackrel{\cong}\longrightarrow T_{[x]}^{Zar}(U\sslash G)=T_{[x]}^{Zar}(X\sslash G)\,.
\end{multline}
Moreover, if $x$ is a smooth point of $X$, and using Luna's Slice Theorem at smooth points (\cite{Luna1973}, 
\cite[Theorem 5.4]{Drzet2000LunasST}), we obtain that $T_x^{Zar}X = T_x^{Zar}(O(x))\oplus T_x^{Zar}\mathcal{S}$ and there exists an \'etale $G_x$-invariant morphism 
$\phi\co \mathcal{S}\to T_x^{Zar}\mathcal{S}$ such that $\phi(x) = 0$, $T_x\phi= Id$, the image of $\phi$ is a saturated open subset $W$ of $T_x^{Zar}\mathcal{S}$, the restriction of $\phi\co \mathcal{S} \to W$  and the morphism $\phi\sslash G_x\co \mathcal{S}\sslash G_x \to W\sslash G_x$ are \'etale $G_x$-morphisms.
As a consequence, we have the following isomorphism: 
\begin{equation}\label{eq:qsslice}
T_{[x]}^{Zar}\big(\phi\sslash G_x\big)\co T_{[x]}^{Zar} (S\sslash G_x)\stackrel{\cong}\longrightarrow T_{0}(W\sslash G_x)=T_{0}\big(T_x^{Zar} \mathcal{S}\sslash G_x\big)\,.
\end{equation}
By combining the preceding isomorphisms \eqref{eq:qslice} and \eqref{eq:qsslice}, we obtain the desired isomorphism:
\[
T_{[x]}^{Zar}(X\sslash G)\cong T_0\big(T_x^{Zar}\mathcal{S}\sslash G_x\big)\,.\qedhere
\]
\end{proof}

\subsection{Tangent spaces to slices and cohomology}\label{sec:slicecoho}
The reductive group $G=\SLn$ acts by conjugation on $R_n(\Gamma)$. More precisely, for $A \in\SLn$ and $\rho\in R_n(\Gamma)$ we define $(A\cdot\rho)(\gamma) = A\rho(\gamma)A^{-1}$ for all $\gamma\in\Gamma$.  We will write $\rho\sim\rho^{'}$ if there exists $A \in\SLn$ such that $\rho^{'} = A\cdot \rho$, and we will call $\rho$ and $\rho^{'}$ equivalent. For $\rho\in R_n(\Gamma)$ we define its character $\chi_\rho\co\Gamma\to\C$ by $\chi_\rho(\gamma)=\tr(\rho(\gamma))$. We have $\rho\sim\rho^{'}\Rightarrow \chi_\rho=\chi_{\rho^{'}}$.
By \cite[Theorem 1.28]{Lubotzky-Magid1985}), if $\rho$ and $\rho^{'}$ are semisimple, then  $\rho\sim\rho^{'}$ if and only if $\chi_\rho=\chi_{\rho^{'}}$.

The algebraic quotient or GIT quotient for the action of $\SLn$ on $R_n(\Gamma)$ is called the character variety. This quotient will be denoted by $X_n(\Gamma)=R_n(\Gamma)\sslash\SLn$. The character variety is not necessary an irreducible affine algebraic scheme.
Work of C.~Procesi \cite{Procesi1976} implies that there exists a finite number of group elements $\{\gamma_i\ | \ 1 \leq i \leq M\}\subset\Gamma$ such that the image of $t\co\R_n(\Gamma)\to\C^M$ given by $t(\rho) = (\chi_\rho(\gamma_1), \ldots, \chi_\rho(\gamma_M))$ can be identified with the affine algebraic set $X_n(\Gamma)\cong t(R_n(\Gamma))$.
This justifies the name character variety.

Let $t\co R_n(\Gamma)\to X_n(\Gamma)$; $\rho\mapsto\chi_\rho$ be the canonical projection.
Then $t$ is a regular morphism and for all $\gamma\in\Gamma$, $I_\gamma\co X_n(\Gamma)\to\C$; $\chi_\rho\mapsto\chi_\rho(\gamma)=\operatorname{tr}(\rho(\gamma))$ is a regular morphism. 

For $\rho\in R_n(\Gamma)$, denote the stabilizer of $\rho$ under the $G$-action by conjugation on $R_n(\Gamma)$ by $G_\rho$. It is the centralizer of $\rho(\Gamma)$ in $G$. 

\begin{prop}\label{prop:slice}
Let $\rho$ be a representation in $R_n(\Gamma)$ such that the orbit $O(\rho)$ of $\rho$ is closed in $R_n(\Gamma)$. 

Let $R_\rho$ be an affine subvariety of $R_n(\Gamma)$ which contains $\rho$ and which is invariant under the conjugation action of $G$. Let $\mathcal{S}$ be a slice of $R_\rho$ at $\rho$. 

Then 
\[
T_\rho^{Zar}\mathcal{S}\cong T_\rho^{Zar}R_\rho/T_\rho^{Zar} O(\rho)\quad\hbox{and}\quad T_{\chi_\rho}^{Zar}(R_\rho\sslash G)\cong 
T_{\chi_\rho}^{Zar}(\mathcal{S}\sslash G_\rho)\,.
\]
Moreover, if $\rho$ is a smooth point of $R_\rho$ then
\[
T_{\chi_\rho}^{Zar}(R_\rho\sslash G)\cong T_0\big(T_\rho^{Zar}\mathcal{S}\sslash G_\rho\big)\,.
\]
In particular, if $\rho$ is a reduced smooth point of $R_n(\Gamma)$ such that the orbit $O(\rho)$ of $\rho$ is closed in $R_n(\Gamma)$ then there exists a slice $\mathcal{S}$ of $R_n(\Gamma)$ at $\rho$ such that 
\[
T_\rho^{Zar}\mathcal{S}\cong H^1(\Gamma; {\Ad\rho})\quad\hbox{and}\quad T_{\chi_\rho}^{Zar}X_n(\Gamma)\cong 
T_0\big(H^1(\Gamma; {\Ad\rho})\sslash G_\rho\big)\,.
\]

\end{prop}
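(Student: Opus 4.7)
The plan is to apply Theorem~\ref{thm:characterslice} to the action of $G=\SLn$ on the $G$-invariant affine subvariety $R_\rho\subset R_n(\Gamma)$ instead of on $R_n(\Gamma)$ itself. Since by hypothesis $G(\rho)$ is closed in $R_n(\Gamma)$, and $R_\rho$ is a closed $G$-invariant subvariety containing $\rho$, the orbit $G(\rho)$ remains closed in $R_\rho$. Hence Luna's \'etale slice theorem applies to $R_\rho$ at~$\rho$, producing a locally closed $G_\rho$-invariant affine slice $\mathcal{S}$ through $\rho$ together with a strongly \'etale $G$-morphism
\[
\psi\co G\times_{G_\rho}\mathcal{S}\longrightarrow U\subset R_\rho\,,
\]
where $U$ is a saturated open neighborhood of $G(\rho)$.

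For the first isomorphism, I would use that $\psi$ is \'etale at $(e,\rho)$ and therefore induces an isomorphism $T_{(e,\rho)}^{Zar}(G\times_{G_\rho}\mathcal{S})\xrightarrow{\cong}T_\rho^{Zar}R_\rho$. The fiber bundle structure $G\times_{G_\rho}\mathcal{S}\to G/G_\rho\cong G(\rho)$, with fiber $\mathcal{S}$ over $[e]$, gives the short exact sequence
\[
0\to T_\rho^{Zar}\mathcal{S}\longrightarrow T_{(e,\rho)}^{Zar}(G\times_{G_\rho}\mathcal{S})\longrightarrow T_\rho^{Zar}G(\rho)\to 0\,,
\]
from which $T_\rho^{Zar}\mathcal{S}\cong T_\rho^{Zar}R_\rho\big/T_\rho^{Zar}G(\rho)$ follows. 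For the second isomorphism, I would apply the quotient part of Luna's theorem: the induced morphism $\psi\sslash G$ identifies a Zariski open neighborhood of $[\rho]\in R_\rho\sslash G$ with an open subset of $\mathcal{S}\sslash G_\rho$, so $T_{[\rho]}^{Zar}(R_\rho\sslash G)\cong T_{[\rho]}^{Zar}(\mathcal{S}\sslash G_\rho)$.

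When $\rho$ is a smooth point of $R_\rho$, the full strength of Luna's theorem provides a $G_\rho$-equivariant \'etale morphism $\phi\co\mathcal{S}\to T_\rho^{Zar}\mathcal{S}$ with $\phi(\rho)=0$ and $d\phi_\rho=\mathrm{Id}$, defined on an open neighborhood of $\rho$. Passing to quotients, $\phi\sslash G_\rho$ is \'etale at $[\rho]$ and yields $T_{[\rho]}^{Zar}(\mathcal{S}\sslash G_\rho)\cong T_0\bigl(T_\rho^{Zar}\mathcal{S}\sslash G_\rho\bigr)$. Combined with the second isomorphism this proves the third statement.

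For the last assertion I would take $R_\rho=R_n(\Gamma)$. Since $\rho$ is a reduced smooth point, A.~Weil's theorem (see Section~\ref{sec:rep-scheme}) yields $T_\rho^{Zar}R_n(\Gamma)\cong Z^1(\Gamma;\Ad\rho)$, and the differential at $e$ of the orbit map $g\mapsto g\cdot\rho$ identifies $T_\rho^{Zar}G(\rho)$ with $B^1(\Gamma;\Ad\rho)$. The first claim then gives $T_\rho^{Zar}\mathcal{S}\cong H^1(\Gamma;\Ad\rho)$, and combining with the third claim gives the final formula $T_{[\rho]}^{Zar}X_n(\Gamma)\cong T_0\bigl(H^1(\Gamma;\Ad\rho)\sslash G_\rho\bigr)$. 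The main subtle point, beyond verifying the hypotheses of Luna's theorem, is to ensure that reducedness of $\rho$ at the scheme level reconciles the Weil isomorphism (which computes the scheme tangent space) with the variety tangent space appearing in Luna's statement.
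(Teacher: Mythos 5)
Your proof follows the same strategy as the paper: apply Luna's \'etale slice theorem to the $G$-invariant subvariety $R_\rho$, extract the slice $\mathcal{S}$ and the strongly \'etale morphism $\psi\co G\times_{G_\rho}\mathcal{S}\to R_\rho$, and deduce the tangent-space isomorphisms. The treatment of the quotient isomorphisms, the smooth case via $\phi\co\mathcal{S}\to T_\rho^{Zar}\mathcal{S}$, and the specialization $R_\rho=R_n(\Gamma)$ using Weil's identification $T_\rho^{Zar}R_n(\Gamma)\cong Z^1(\Gamma;\Ad\rho)$, $T_\rho^{Zar}G(\rho)\cong B^1(\Gamma;\Ad\rho)$ all match the paper.

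The one place your argument is too brief is the derivation of the first isomorphism $T_\rho^{Zar}\mathcal{S}\cong T_\rho^{Zar}R_\rho/T_\rho^{Zar}G(\rho)$ --- and this is precisely the step the paper singles out as the only one requiring a real argument ("We have just to prove the first isomorphism"). Your short exact sequence $0\to T_\rho^{Zar}\mathcal{S}\to T_{(e,\rho)}^{Zar}(G\times_{G_\rho}\mathcal{S})\to T_\rho^{Zar}G(\rho)\to 0$ together with the \'etale isomorphism $d\psi$ gives an \emph{abstract} isomorphism $T_\rho^{Zar}R_\rho\big/d\psi\bigl(T_\rho^{Zar}\mathcal{S}\bigr)\cong T_\rho^{Zar}G(\rho)$; it does not by itself show that the subspaces $d\psi\bigl(T_\rho^{Zar}\mathcal{S}\bigr)$ and $T_\rho^{Zar}G(\rho)$ are transverse inside $T_\rho^{Zar}R_\rho$, which is what the claimed isomorphism actually asserts (equivalently, that $q\circ di$ is injective, $q$ the projection, $i$ the inclusion $\mathcal{S}\hookrightarrow R_\rho$). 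To close this, one must observe that $\psi$ restricted to $G\times\{\rho\}$ is the orbit map $g\mapsto g\cdot\rho$, so $d\psi$ carries a complement of $\ker(dp)=T_\rho^{Zar}\mathcal{S}$ isomorphically onto $T_\rho^{Zar}G(\rho)$; this is exactly what the paper verifies by chasing the maps $j_1,j_2$, $d\theta\oplus\mathrm{id}$, $d\phi$, $d\psi$ through a local trivialization of the bundle $p\co G\times_{G_\rho}\mathcal{S}\to G/G_\rho$. With that one observation added, your argument is complete and coincides with the paper's.

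One small imprecision: you write that $\psi\sslash G$ "identifies a Zariski open neighborhood" of $[\rho]$; \'etale is strictly weaker than a Zariski-local isomorphism, so you should only claim a complex-analytic local isomorphism (or directly an isomorphism of Zariski tangent spaces). The tangent-space conclusion you draw is nevertheless correct.
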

The proof of this result is implicitly contained in 
\cite{Leila02} and \cite{sikora_character_2012}. But for the sake of completeness, we will reproduce it here.
\begin{proof}
We have just to prove the first isomorphism. All the other assertions are an immediate consequence of Theorem \ref{thm:characterslice}.

  Let $\psi\co G\times_{G_\rho}\mathcal{S}\to R_\rho$ be the \'etale $G$-morphism given by Luna's Theorem and $U:=Im\psi$. 
We denote by $\theta$ the isomorphism $O(\rho)\stackrel{\theta}\to G/G_\rho$; $\theta(g\rho g^{-1})=[g]$ for $g\in G$.

As the fibration $G\to G/G_\rho$ is locally trivial, the morphism $p\co\ G\times_\rho \mathcal{S}\to G/G_\rho$; $[g,v]\mapsto [g]$ is also a locally trivial fibration with fiber $g\mathcal{S}$ in the \'etale topology (\cite{popov_vinberg_1994}). Let $Z$ be an open set trivializing $G/G_\rho$; then there exists a morphism \'etale $\phi\co Z\times \mathcal{S}\to p^{-1}(Z)$; $([g], \varphi)\mapsto[g,\varphi]$ such that $p\circ\phi=pr_1$ where $pr_1\co Z\times \mathcal{S}\to \mathcal{S}$ is the projection on the first component.

Consider the following chain of morphisms:
\begin{multline*}
T_\rho^{Zar}\mathcal{S}\stackrel{j_2}{\longrightarrow} T_\rho^{Zar}O(\rho)\oplus T_\rho^{Zar}\mathcal{S}\stackrel{d\theta\oplus id}{\longrightarrow} T_{[e]}^{Zar}G/G_\rho \oplus T_\rho^{Zar}\mathcal{S}\stackrel{d\phi}{\longrightarrow} \\
T_{[e,\rho]}^{Zar}(G\times_{G_\rho}\mathcal{S})\stackrel{d\psi}
{\longrightarrow} T_\rho^{Zar}R_\rho\stackrel{q}{\longrightarrow} T_\rho^{Zar}R_\rho/T_\rho^{Zar}O(\rho)
\end{multline*}
where $j_2$ is the natural inclusion and $q$ is the canonical projection. As $p^{-1}(Z)$ is open in $G\times_{G_\rho}\mathcal{S}$ and $\phi$, $\psi$ are \'etale, we obtain that $d\phi$ and $d\psi$ are isomorphisms. Moreover, it is easy to see that $d\psi\circ d\phi\circ(d\theta\oplus id)\circ j_2=di$ where $i\co S\to R_\rho$ is the natural inclusion.
 
On the other hand, let $j_1\co T_\rho^{Zar}O(\rho)\to  T_\rho^{Zar}O(\rho)\oplus T_\rho^{Zar}\mathcal{S}$ be the inclusion map; then 
$d\psi\circ d\phi\circ(d\theta\oplus id)\circ j_1$ is the natural inclusion $T_\rho^{Zar}O(\rho)\to T_\rho^{Zar}R_\rho$. 
As a consequence, $\alpha=q\circ di\co T_\rho^{Zar}S\to T_\rho^{Zar}R_\rho/T_\rho^{Zar}O(\rho)$ is an isomorphism and 
$s=di\circ\alpha^{-1}\co 
T_\rho^{Zar}R_\rho/T_\rho^{Zar}O(\rho)\to T_\rho^{Zar}R_\rho$ 
is a section for $q$. Thus 
$T_\rho^{Zar}\mathcal{S}\cong T_\rho^{Zar}R_\rho/T_\rho^{Zar}O(\rho)$.

\end{proof}

\subsection{Local structure of the character variety}\label{sec:charac}

As the set $R_n^{irr}(\Gamma)$ of irreducible representations in $R_n(\Gamma)$ is Zariski open, its image $X_n^{irr}(\Gamma)=t(R_n^{irr}(\Gamma)) $ is Zariski open in $X_n(\Gamma)$. 
Their Zariski closures in $R_n(\Gamma)$ and $X_n(\Gamma)$ will be denoted by  $\overline{R_n^{irr}(\Gamma)}$ and $\overline{X_n^{irr}(\Gamma)}$. 

\begin{remark} \label{rem:principal_bundle} The action of $\mathrm{PSL}(n)$ on $R_n^{irr}(\Gamma)$ has trivial stabilizers, it follows from the \'etale slice theorem of Luna that the quotient map $R_n^{irr}(\Gamma)\to X_n^{irr}(\Gamma)$ is a principal bundle for the \'etale topologie (see \cite[(1.30)]{Lubotzky-Magid1985}).

\end{remark}

For the proof of the next Proposition, we will make use of the following result which is a generalization of  \cite[Lemma 4.1]{Boyer1998}. We include a proof for completeness.
\begin{lemma}\label{lem:decomposition}
For any algebraic component $X_0 \subset X_n(\Gamma)$ such that $\dim X_0>0$, there is an algebraic component $R_0$ of $t^{-1}(X_0)$ which satisfies $t(R_0) = X_0$. 
 If $X_0$ contains the character of an irreducible representation, then $R_0$ is invariant under conjugation, contains irreducible representations and is uniquely determined by $X_0$. 
\end{lemma}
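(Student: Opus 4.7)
The plan is to reduce everything to basic facts about the GIT quotient map $t\colon R_n(\Gamma)\to X_n(\Gamma)=R_n(\Gamma)\sslash\SLn$: the map $t$ is surjective, $\SLn$ is connected, and $t$ sends closed $\SLn$-invariant subsets to closed subsets.

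For the first assertion, I would start from the fact that $t^{-1}(X_0)$ is closed and $\SLn$-invariant, so it decomposes into finitely many irreducible components $R_1,\ldots,R_k$. Since $\SLn$ is connected, it acts trivially on the finite set of components, so each $R_i$ is itself $\SLn$-invariant. The image $t(R_i)$ is therefore closed in $X_n(\Gamma)$ (reductive quotients send closed invariant sets to closed sets), and surjectivity of $t$ on $t^{-1}(X_0)$ gives $X_0=\bigcup_i t(R_i)$. By irreducibility of $X_0$, at least one $R_i=:R_0$ satisfies $t(R_0)=X_0$.

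For the second assertion, suppose that $X_0$ contains the character of an irreducible representation, and put $U:=X_0\cap X_n^{\mathit{irr}}(\Gamma)$. Because $X_n^{\mathit{irr}}(\Gamma)$ is Zariski open in $X_n(\Gamma)$ and $U$ is nonempty, $U$ is open and dense in $X_0$, hence irreducible. The preimage $t^{-1}(U)\subset R_n^{\mathit{irr}}(\Gamma)$ is open in $t^{-1}(X_0)$, $\SLn$-invariant, and lies in the locus where $t$ restricts to a principal $\mathrm{PGL}(n,\C)$-bundle up to finite stabilizers (any two irreducible representations with the same character are $\SLn$-conjugate by Lubotzky--Magid). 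The fibers of $t|_{t^{-1}(U)}$ are thus single orbits, each irreducible of the same dimension $n^2-1$, so $t^{-1}(U)$ is irreducible. It therefore lies in a unique irreducible component of $t^{-1}(X_0)$, which I will call $R_0$; by construction $R_0$ contains irreducible representations and, being $\SLn$-invariant as argued above, is invariant under conjugation. The equality $t(R_0)=X_0$ follows because $t(R_0)$ is closed, contains $U$, and $U$ is dense in $X_0$.

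The remaining point is uniqueness. If $R_1$ is any other irreducible component of $t^{-1}(X_0)$ containing an irreducible representation, then $R_1\cap R_n^{\mathit{irr}}(\Gamma)$ is nonempty and open in $R_1$, hence dense in $R_1$. On the other hand this intersection maps into $U$ and so is contained in $t^{-1}(U)\subset R_0$, forcing $R_1\subset R_0$ and hence $R_1=R_0$. The main technical point I expect to have to justify carefully is the irreducibility of $t^{-1}(U)$; the cleanest route is to invoke the standard fact that, on the irreducible locus, the action of $\SLn/Z(\SLn)=\mathrm{PGL}(n,\C)$ on $R_n^{\mathit{irr}}(\Gamma)$ is free in the scheme-theoretic sense modulo the center of $\SLn$, so that $R_n^{\mathit{irr}}(\Gamma)\to X_n^{\mathit{irr}}(\Gamma)$ is a geometric quotient with connected equidimensional fibers, whence preimages of irreducible subsets are irreducible.
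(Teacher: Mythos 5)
The paper does not actually give a proof of this lemma: it just cites Boyer--Zhang (Lemma 4.1) for $\SLn[2]$ and says it ``generalizes easily,'' so there is no proof in the text to compare against. Your argument is a reasonable self-contained replacement, and both parts are sound in outline. The first part is standard: a connected group preserves each irreducible component (in algebraic geometry the clean justification is that the action map $G\times R_i\to X$ has irreducible image which contains and hence equals $R_i$, rather than invoking ``trivial action on a discrete set,'' but either way the conclusion is correct), and closed invariant sets map to closed sets under the GIT quotient.

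The one place that needs a bit more care than you give it is the assertion that $t^{-1}(U)$ is irreducible, which you rightly flag as the main technical point. The fibers over $U$ are single $\SLn$-orbits, hence irreducible and $(n^2-1)$-dimensional (stabilizers are finite by Schur's lemma), but ``irreducible base + irreducible equidimensional fibers $\Rightarrow$ irreducible total space'' is not true in general for morphisms of varieties without some further hypothesis. What saves you is that $t\vert_{t^{-1}(U)}\colon t^{-1}(U)\to U$ is an \emph{open} map: it is a geometric quotient, hence submersive, and for an open $V\subset t^{-1}(U)$ the saturation $\SLn\cdot V$ is open and invariant with $t(V)=t(\SLn\cdot V)$, so $t(V)$ is open. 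Once openness is in hand, the standard argument works: if $t^{-1}(U)=V_1\cup V_2$ with $V_1,V_2$ proper closed, then $t(t^{-1}(U)\smallsetminus V_1)$ and $t(t^{-1}(U)\smallsetminus V_2)$ are nonempty open in $U$, hence meet; for a common $\chi$ the fiber $t^{-1}(\chi)$, being irreducible, is contained in one of $V_1,V_2$, contradicting that it meets both complements. So the fact you invoke is correct, but the proof should say explicitly that openness of the quotient map is what makes it work, not just connectedness and equidimensionality of fibers. An alternative that avoids this issue entirely (and is closer to the Boyer--Zhang style of argument) is to work directly with the components: if $R_1,R_2$ both contain irreducible representations, then $R_i\cap R_n^{irr}(\Gamma)$ is open dense in $R_i$ and maps into $U$, so for $\chi\in U$ the single orbit $t^{-1}(\chi)$ meets both $R_1$ and $R_2$; being irreducible and contained in the invariant closed sets $R_1,R_2$, it lies in $R_1\cap R_2$, and then $R_1\cap R_n^{irr}(\Gamma)\subset R_2$ forces $R_1=R_2$ by density. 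Your uniqueness paragraph is essentially this argument; if you replaced the irreducibility-of-$t^{-1}(U)$ step by it, the whole proof would be cleaner and would not need the openness lemma at all.
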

\begin{proof}
 Let $V_1, V_2,\ldots,V_n$ be the algebraic components of $t^{-1}
(X_0)$. The set $t^{-1}(X_0)$ is closed under conjugation, and so there is a regular map $g \co V_i \times \SLn\to t^{-1}
(X_0)$ defined by $(\rho, g)\to g\rho g^{-1}$. The image of this map contains $V_i$ and since $V_i \times\SLn$ is irreducible, $\overline{g(V_i \times\SLn)}\subset t^{-1}(X_0)$ is also
irreducible. By construction, this is only possible if the image of $g$ is $V_i$. Thus each $V_i$ is closed under conjugation.
According to Theorem 3.5(iv) of \cite{Newstead1978}, the image by $t$ of a closed invariant subset
of $R_n(\Gamma)$ is a Zariski-closed subset of $X_n(\Gamma)$. Thus for each $i$, $0\leq \dim t(V_i)\leq \dim X_0$. Now $t$ is surjective and $\dim X_0>0$, so there is some
index $i$ such that $t(V_i) = X_0$. Taking $R_0$ to be such a $V_i$ proves the first part of the lemma.

Now assume that $X_0$ contains the character of an irreducible representation. After possibly reordering the components $V_1, V_2,\ldots,V_n$, there is some
$k \in \{1, 2,\ldots,n\}$ such that $t(V_i) = X_0$ when $1 \leq i \leq k$, while $\dim t(V_i)<\dim X_0$ when $k + 1 \leq i \leq n$. Suppose that $k \geq 2$ so that in particular $t(V_1) = t(V_2) = X_0$. Now as $X_0$ contains the character of an irreducible representation, we can use Remark~\ref{rem:principal_bundle} to see that $V_1$ and $V_2$ both have
dimension $n^2-1+\dim X_0$. Further $X_0$ contains a Zariski open set $U$ consisting of characters
of irreducible representations \cite[Proposition 27]{sikora_character_2012}. As an irreducible representation
is determined up to conjugation by its character, and both $V_1$ and $V_2$ are
closed under conjugation, it follows that each one contains the $(n^2-1+\dim X_0)$-dimensional set $t^{-1}(U)$. But then $V_1 = V_2$, which is clearly impossible. Hence $k = 1$ and
the lemma follows.
\end{proof}

\begin{prop}\label{prop:smoothchar}
Let $\chi_D$ be the character $\chi_D=\chi(\rho^{(n)}_D)=\chi(\rho_D)$. Denote by $X_n(\Z):=t(R_n(\Z))$ and $X^{(n)}_D:=t(R^{(n)}_D)$  the quotients of the components $R_n(\Z)$ and $R^{(n)}_D$ respectively. Then
\begin{enumerate}
\item\label{sc2} The character $\chi_D$ is a smooth point of each of the two $(n-1)$-dimensional components $X^{(n)}_D$ and $X_n(\Z)$.
   \item\label{sc1} The character $\chi_D$ is a smooth point of $\overline{X_n^{irr}(\Gamma)}$.
\item\label{sc3} The intersection of the two above components at $\chi_D$ is trivial, i.e. 
\[
T_{\chi_D}^{Zar}X_n(\Z)\cap T_{\chi_D}^{Zar}X^{(n)}_D=\{0\}\,.
\]
\end{enumerate}

\end{prop}

\begin{proof}    
{\it(\ref{sc2})}: The orbit $O(\rho_D)$ 
 is closed in $R_n(\Gamma)$ with $\dim O(\rho_D)=n^2-n$ and, 
 as $D=\rho_D(\mu)$ is a regular element of $\SLn$, the stabilizer $G_{\rho_D}$ of $\rho_D$ is the set of diagonal matrices $\mathbb{T}:=(\C^*)^{(n-1)}$.

Now, recall that for the abelian representation $\rho_D$,
$H^1(\Gamma;\Ad\rho_D)$ is
identified  with $\mathbb{C}^{3(n-1)}$ by choosing the basis
represented by the cocycles
$(U_i^+,U^-_i, H_i)\,, 1\leq i \leq n-1$ (see Lemma \ref{elm:basis-cohomology-abelian}).

The $G_{\rho_D}$-action on $Z^1(\Gamma;\Ad\rho_D)$ descends to an action on $H^1(\Gamma;\Ad\rho_D)$. More precisely, the action of a diagonal matrix $\mathrm{diag}(t_1,\ldots,t_n)\in\mathbb{T}$ on a cocycle in $H^1(\Gamma;\Ad\rho_D)$ is given by
\begin{multline*}
(t_1,\ldots,t_n)\cdot\left(U_1^+,U_1^-,\ldots,U_{n-1}^+,U_{n-1}^-, H_1,\ldots,H_{n-1}\right)=\\
\left(\frac{t_1}{t_2} U_1^+,\frac{t_2}{t_1} U_1^-,\ldots,\frac{t_n}{t_{n-1}}U_{n-1}^+,\frac{t_{n-1}}{t_n}U_{n-1}^-, H_1,\ldots,H_{n-1}\right)\,.
\end{multline*}
The action is then trivial on the abelian cocycles $H_i\,, 1\leq i\leq n-1$. 
Furthermore, $\C^*$ acts on $\C^2$ by $\lambda\cdot(z_1,z_2)=(\lambda\,z_1,1/\lambda\,z_2)$ then
$\C^2\sslash\C^*\cong \C$.

In particular, $V_\emptyset\sslash(\C^*)^{n-1}\cong V_\emptyset$ and  $V_{\{1,\ldots,n-1\}}\sslash(\C^*)^{n-1}\cong \C^{n-1}$.
Thus $H^1(\Gamma;\Ad\rho_D)\sslash S_{\rho_D}\cong
\C^{2(n-1)}$ and $\dim T_0\left(H^1(\Gamma;\Ad\rho_D)\sslash S_{\rho_D}\right)=2(n-1)$.

It is a well known fact that $X_n(\Z):=t(R_n(\Z))$ is an irreducible component of $X_n(\Gamma)$ of dimension $n-1$ and that $\chi_D$ is a smooth point of  $X_n(\Z)$ (see \cite[Example 1.2]{dolgachev2003lectures})). Thus $\dim_{\chi_D}X_n(\Z)=n-1$. 

Observe that we can prove this known result by applying  Proposition \ref{prop:slice} to the representation $\rho_D$ as a smooth point of the affine variety $R_n(\Z)$ to obtain a slice $\mathcal{S}_{Ab}$ of $R_n(\Z)$ such that 

\[
T_{\rho_D}^{Zar}\mathcal{S}_{Ab}\cong T_{\rho_D}^{Zar}R_n(\Z)/T_{\rho_D}^{Zar} O(\rho)\cong V_\emptyset
\]
 and 
\begin{align*}
T_{\chi_D}^{Zar}X_n(\Z)& = T_{\chi_D}^{Zar}(R_n(\Z)\sslash\SLn)\\
&\cong T_0\big(T_\rho^{Zar}\mathcal{S}_{Ab}\sslash\mathbb{T}\big)\cong T_0\big(V_\emptyset\sslash(\C^*)^{n-1}\big)\cong V_\emptyset\,.
\end{align*}
So we deduce that $\dim T_{\chi_D}^{Zar}X_n(\Z)=n-1=\dim X_n(\Z)$ and  $\chi_D$ is a smooth point of $X_n(\Z)$.

Again, by applying Proposition \ref{prop:slice} to the representation $\rho_D$ as a smooth point of the affine variety $R^{(n)}_D$, we obtain a slice $\mathcal{S}$ of $R^{(n)}_D$ such that 

\[
T_{\rho_D}^{Zar}\mathcal{S}\cong T_{\rho_D}^{Zar}R^{(n)}_D/T_{\rho_D}^{Zar} O(\rho)\cong V_{\{1,\ldots,n-1\}}
\]
 and 
\begin{align*}
T_{\chi_D}^{Zar}X^{(n)}_D & = T_{\chi_D}^{Zar}(R^{(n)}_D\sslash\SLn)\\
&\cong T_0\big(T_\rho^{Zar}\mathcal{S}\sslash(\C^*)^{n-1}\big)\cong T_0\big(V_{\{1,\ldots,n-1\}}\sslash(\C^*)^{n-1}\big)\cong \C^{n-1}\,.
\end{align*}
So we deduce that $\dim T_{\chi_D}^{Zar}X^{(n)}_D=n-1=\dim X^{(n)}_D$ and thus $\chi_D$ is a smooth point of $X^{(n)}_D$.

{\it(\ref{sc1})}:

 Suppose that $X_0\subset \overline{X_n^{irr}(\Gamma)}$ is an irreducible component which contains $\chi_D$.
By Lemma~\ref{lem:decomposition}, there exists a unique algebraic component
$R_0\subset t^{-1}(X_0)$ which is invariant under conjugation, contains irreducible representations, and such that $t(R_0)=X_0$.
Let $\phi\in R_0$ such that $t(\phi)=\chi_D=t(\rho_D)$. As $R_0$ is invariant under conjugation and closed, we obtain $\overline{O(\phi)}\subset R_0$. By Lemma 1.26 of \cite{Lubotzky-Magid1985}, $\overline{O(\phi)}$ contains a semisimple representation 
$\phi_0$. Thus $t(\phi_0)=t(\phi)=t(\rho_D)$. As $\phi_0$ and $\rho_D$ are semisimple, we conclude that $\phi_0\sim\rho_D$, and hence $\rho_D\in R_0$. 
  By Theorem~\ref{thm:components},  $R^{(n)}_D$ is the only algebraic component of $R_n(\Gamma)$ containing irreducible representations and the representation $\rho_D$. We conclude that $R_0\subset R^{(n)}_D$, and hence 
 $X_0\subset X^{(n)}_D:=t(R^{(n)}_D)$. Since $X_0$ is an algebraic component of $\overline{X_n^{irr}(\Gamma)}$ it follows that $X_0 = X^{(n)}_D$.

{\it(\ref{sc3})}: Note that, as $T_{\chi_D}^{Zar}X_n(\Z)\cong V_\emptyset$, $T_{\chi_D}^{Zar}X^{(n)}_D\cong V_{\{1,\ldots,n-1\}}$ and
$V_\emptyset\cap V_{\{1,\ldots,n-1\}}=\{0\}$, we obtain that 
 the intersection of the two components $X_n(\Z)$ and $X^{(n)}_D$ at $\chi_D$ is trivial.

\end{proof}

There is an analogue of the following theorem when the considered representation is an irreducible one
(\cite[Theorem 2.13]{Lubotzky-Magid1985}).

\begin{thm}\label{thm:proj_to_X(Gamma)}
Let $\operatorname{ad}\co T_{\rho^{(n)}_D}^{Zar}O(\rho^{(n)}_D)\to T_{\rho^{(n)}_D}^{Zar}R_n(\Gamma)$ and $dt\co T_{\rho^{(n)}_D}^{Zar}R_n(\Gamma)\to T_{\chi_D}^{Zar}X_n(\Gamma)$ be the tangent morphisms induced by the conjugation action of $\SLn$ on $\rho^{(n)}_D$ and by the canonical projection $t\co R_n(\Gamma)\to X_n(\Gamma)$.

Then we have the exact sequence
\[
0\to T_{\rho^{(n)}_D}^{Zar}O(\rho^{(n)}_D)\stackrel{ad}{\longrightarrow} T_{\rho^{(n)}_D}^{Zar}R_n(\Gamma) \stackrel{dt}{\longrightarrow} T_{\chi_D}^{Zar}\overline{X_n^{irr}(\Gamma)}\to0
\]
and $H^1(\Gamma;\Ad\rho^{(n)}_D )\cong T_{\chi_D}^{Zar}\overline{X_n^{irr}(\Gamma)}$.

\end{thm}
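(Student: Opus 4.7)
The plan is to verify the exact sequence by identifying each of the three maps via group cohomology and then to compare dimensions using the machinery already established. First I would use the fact that $\rho^{(n)}_D$ is scheme smooth (Theorem~\ref{thm:upper-triang-deform}) to identify $T_{\rho^{(n)}_D}^{Zar}R_n(\Gamma)$ with $Z^1(\Gamma;\Ad\rho^{(n)}_D)$, a vector space of dimension $n^2+n-2$. Since $R^{(n)}_D$ is the unique component through $\rho^{(n)}_D$ and it contains irreducible representations, $\overline{R_n^{irr}(\Gamma)}$ coincides with $R_n(\Gamma)$ in a neighborhood of $\rho^{(n)}_D$. Dually, by Proposition~\ref{prop:smoothchar}(1), $X_D^{(n)}$ is the unique component of $\overline{X_n^{irr}(\Gamma)}$ passing through $\chi_D$, so $T_{\chi_D}^{Zar}\overline{X_n^{irr}(\Gamma)}=T_{\chi_D}^{Zar}X_D^{(n)}$ has dimension $n-1$.

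Next, I would identify $ad$ with the coboundary map $\delta\colon\sln\to Z^1(\Gamma;\Ad\rho^{(n)}_D)$ sending $X$ to $\gamma\mapsto\Ad_{\rho^{(n)}_D(\gamma)}X-X$; its kernel is $H^0(\Gamma;\Ad\rho^{(n)}_D)$, which vanishes by Proposition~\ref{prop:upper-triang-cohom}, so $ad$ is injective with image $B^1(\Gamma;\Ad\rho^{(n)}_D)=T_{\rho^{(n)}_D}^{Zar}G(\rho^{(n)}_D)$, of dimension $n^2-1$. The composition $dt\circ ad$ vanishes because characters are conjugation invariant, hence $dt$ descends to a linear map $\overline{dt}\colon H^1(\Gamma;\Ad\rho^{(n)}_D)\to T_{\chi_D}^{Zar}\overline{X_n^{irr}(\Gamma)}$ between two $(n-1)$-dimensional vector spaces. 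Establishing the exactness of the three term sequence then reduces to showing that $\overline{dt}$ is an isomorphism, and the stated identification $H^1(\Gamma;\Ad\rho^{(n)}_D)\cong T_{\chi_D}^{Zar}\overline{X_n^{irr}(\Gamma)}$ is then immediate.

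The hard part will be showing that $\overline{dt}$ is bijective, since Luna's \'Etale Slice Theorem cannot be invoked directly at $\rho^{(n)}_D$: its orbit is not closed, as its closure contains the semisimple representation $\rho_D$. To handle this I would apply Proposition~\ref{prop:slice} to $\rho_D$, whose orbit is closed and which is a smooth point of the component $R^{(n)}_D$ by Proposition~\ref{prop:R^n}. This produces a slice $\mathcal{S}_D$ at $\rho_D$ in $R^{(n)}_D$ with $T_{\rho_D}^{Zar}\mathcal{S}_D\cong T_{\rho_D}^{Zar}R^{(n)}_D/T_{\rho_D}^{Zar}G(\rho_D)=\widetilde V_{\{1,\ldots,n-1\}}/B^1(\Gamma;\Ad\rho_D)$ and an isomorphism $T_{\chi_D}^{Zar}X_D^{(n)}\cong T_0(T_{\rho_D}^{Zar}\mathcal{S}_D\sslash\mathbb{T})$, matching the dimension count $n-1$ obtained in the proof of Proposition~\ref{prop:smoothchar}(2). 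To conclude, I would show that the scheme theoretic fiber $t^{-1}(\chi_D)\cap R^{(n)}_D$ coincides locally at $\rho^{(n)}_D$ with the reduced orbit $G(\rho^{(n)}_D)$, by using the upper semi-continuity of the stabilizer dimension together with the observation that the non-vanishing of the cocycles $u_i^+$ is preserved in a neighborhood of $\rho^{(n)}_D$. This forces $\Ker(dt_{\rho^{(n)}_D})=B^1(\Gamma;\Ad\rho^{(n)}_D)$; combined with $\image(dt)\subseteq T_{\chi_D}^{Zar}X_D^{(n)}$ and the rank--nullity theorem, one gets $\image(dt)=T_{\chi_D}^{Zar}\overline{X_n^{irr}(\Gamma)}$, completing both the exactness of the sequence and the desired cohomological identification.
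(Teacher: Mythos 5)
Your reduction of the theorem to proving $\Ker(dt)\subseteq B^1(\Gamma;\Ad\rho^{(n)}_D)$ is correct, and the preliminary steps (identifying $T_{\rho^{(n)}_D}^{Zar}R_n(\Gamma)$ with $Z^1$, injectivity of $ad$ from $H^0=0$, vanishing of $dt\circ ad$) match what the paper does. But from that point on you diverge sharply and, I think, into a gap. The paper closes the argument with an explicit cohomological computation: a representative $U$ of a class in $\Ker(dt)$ is put, via Corollary~\ref{cor:H^1sln}, into a normal form with subdiagonal entries $\epsilon_i u_i^-$, the trace condition $\frac{d}{ds}\tr\rho_s(\gamma)|_{s=0}=0$ is restricted to $\Gamma'$ to get $\sum_i \epsilon_i u_i^-u_i^+\equiv 0$ on $\Gamma'/\Gamma''\otimes\C$, the decomposition of the Alexander module is used to evaluate at $e_i^++e_i^-$ and extract $\epsilon_i=0$, and finally Remark~\ref{rem:trace-cohom} forces $U$ to be a coboundary.

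You instead propose a geometric route: apply Luna's slice at $\rho_D$ (whose orbit is closed), and then show that the scheme-theoretic fibre $t^{-1}(\chi_D)\cap R^{(n)}_D$ agrees locally at $\rho^{(n)}_D$ with the reduced orbit $G(\rho^{(n)}_D)$. This is exactly where the gap sits. First, the slice at $\rho_D$ only controls the tangent structure of $X_n(\Gamma)$ and of the fibre at $\rho_D$ itself; it does not by itself transfer to a statement about $dt$ at the different point $\rho^{(n)}_D$, where Luna's theorem is unavailable. Second, set-theoretic agreement of the fibre with the orbit near $\rho^{(n)}_D$ (even granting that) is not enough to conclude $\Ker(dt_{\rho^{(n)}_D})=B^1$: the scheme-theoretic fibre could be nonreduced or have an embedded component at $\rho^{(n)}_D$, which would enlarge its Zariski tangent space beyond $T_{\rho^{(n)}_D}G(\rho^{(n)}_D)$. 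Upper semicontinuity of the stabilizer dimension together with the persistence of $u_i^+\neq 0$ controls orbit dimensions nearby, but it does not control the scheme structure of the fibre. Precisely the content the paper's cohomological computation supplies — that no traceless cocycle with vanishing infinitesimal trace-variation exists outside $B^1$ — is what you would need to make your geometric argument rigorous. As written, the final paragraph asserts the conclusion rather than proving it.
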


\begin{proof}
 Consider the commutative diagram:

\[\begin{CD}
T_{\rho^{(n)}_D}^{Zar}O(\rho^{(n)}_D) @>\iota>> \mathrm{Inn}(\Gamma; \Ad\rho^{(n)}_D)=B^1(\Gamma; \Ad\rho^{(n)}_D)\\
@VV\operatorname{ad}V @VVj V\\
T_{\rho^{(n)}_D}^{Zar}R(\Gamma) @>>> \mathrm{Der}(\Gamma; \Ad\rho^{(n)}_D)=Z^1(\Gamma;\Ad\rho^{(n)}_D)
\end{CD}
\]
where $\iota\co\sln\ni a\mapsto \left(\iota_a\co\gamma\mapsto\Ad_{\rho^{(n)}_D(\gamma)}a-a;\ \forall\gamma\in\Gamma\right)$.

Now $j$ is injective and $\iota$ is injective as $H^0(\Gamma; \Ad\rho^{(n)}_D)=0$ (Proposition~\ref{prop:upper-triang-cohom}), so $\operatorname{ad}$ is injective too. By the commutativity of the above diagram and for all $a\in\sln$, we have 
\[
\operatorname{ad}(a)=\displaystyle\frac{d\rho_s}{ds}\big|_{s=0} \text{ where }\rho_s(\gamma)=\left(I+s(\Ad\rho^{(n)}_D(\gamma)a-a)\right)\rho^{(n)}_D(\gamma);\ \text{ for all $\gamma\in\Gamma$.}
\]
Thus, for all $\gamma\in\Gamma$, $dI_\gamma\circ dt\circ \operatorname{ad}(a)=
\displaystyle\frac{d}{ds}\left(\tr\rho_s(\gamma)\right)\big|_{s=0}=\tr\left(a\rho^{(n)}_D(\gamma)-\rho^{(n)}_D(\gamma)a\right)=0$ and $dt\circ \ad(a)=0$.

To prove that $\Ker dt=\operatorname{Im}\,\operatorname{ad}$, we consider a formal path
$\rho_s=\left(I+sU+o(s^2)\right)\rho^{(n)}_D$ such that $U=\displaystyle\frac{d\rho_s}{ds}\big|_{s=0}\in\Ker dt$.
The relation $dt\left(\displaystyle\frac{d\rho_s}{ds}\big|_{s=0} \right)=0$ implies that 
$\displaystyle\frac{d}{ds}(\tr\rho_s)\big|_{s=0}=0.$ 

By Corollary~\ref{cor:H^1sln} and modulo conjugation by a coboundary, we may assume that there exists complexes $\epsilon_i\in\C$, and cochains $a_i\in C^1(\Gamma;\C)$, $1\leq i\leq n-1$, such that the cocycle $U\in Z^1(\Gamma;\Ad\rho^{(n)}_D)$ is of the form:

\[
U =
\begin{pmatrix}
a_1 & * & * & \dots & * \\
\epsilon_1u_1^- & a_2 & * & \ddots &\vdots \\
0&\epsilon_2u_2^- & a_3  & \ddots &\vdots \\
\vdots & \ddots & \ddots & \ddots &* \\
0 & \dots & 0  & \epsilon_{n-1}u_{n-1}^- & a_n
\end{pmatrix}\in Z^1(\Gamma; \Ad\rho^{(n)}_D)
\,.
\]

Thus $0=\displaystyle\frac{d}{ds}(\tr\rho_s(\gamma))\big|_{s=0}$
implies that: 
\[
 \sum_{i=1}^n\lambda_i^{h(\gamma)}a_i(\gamma)+
 \sum_{i=1}^{n-1}\lambda_{i+1}^{h(\gamma)}\epsilon_iu_i^-
(\gamma)u_i^+(\gamma)=0\quad \forall\gamma\in\Gamma
 \,.
\]

As a consequence, we obtain:  
\[
\sum_{i=1}^na_i(\gamma)+
 \sum_{i=1}^{n-1}\epsilon_iu_i^-(\gamma)u_i^+(\gamma)=0\quad \forall\gamma\in\Gamma^{'}\,.
 \]

Now $\sum_{i=1}^na_i=0$ as $U\in Z^1(\Gamma;\Ad\rho^{(n)}_D)$, so we obtain: 
\[\sum_{i=1}^{n-1}\epsilon_iu_i^-(\gamma)u_i^+(\gamma)=0 \quad \forall\gamma\in\Gamma^{'}\,.
\]
The fact that the derivations $u_i^\pm$, $1\leq i\leq n-1$, factor through $\Gamma^{''}$ give the equation: 
\[
\sum_{i=1}^{n-1}\epsilon_iu_i^-(x)u_i^+(x)=0\quad  \forall x\in\Gamma^{'}/\Gamma^{''}\otimes\C\,.
\]
Recall that as $\alpha_i=\lambda_i/\lambda_{i+1}$, $1\leq i\leq n-1$ are simple roots of the Alexander polynomial $\Delta_K$ and $\Delta_K(\lambda_i/\lambda_{j})\not=0$, $|i-j|\geq2$, the Alexander module decomposes as 
$\displaystyle\oplus_{i=1}^{n-1}\left( \Lambda/t-\alpha_i\oplus  \Lambda/t-\alpha_i^{-1} \right)\oplus \tau$ where $\tau$ is a torsion $\Lambda$-module with no $(t-\alpha_i^{\pm 1})$-torsion. 
Choose a basis $(e_1^+, e_1^-,\ldots,e_{n-1}^+, e_{n-1}^-)$ of $\oplus_{i=1}^{n-1}\left( \Lambda/(t-\alpha_i)\oplus  \Lambda/(t-\alpha_i^{-1}) \right)$. 

Modulo coboundaries, we may suppose that the non principal derivations $\tilde u_i^\pm$
are generators of $H^1(\Gamma;\C_{\alpha_i^{\pm 1}})$ as described in the proof of Proposition~\ref{prop:H1GammaC}. In particular,
\[
\tilde u_i^\pm(e_i^\pm)=1; \ \tilde u_i^\pm(e_i^\mp)=0;\ \tilde u_i^\pm(e_j^\pm)=
\tilde u_i^\pm(e_j^\mp)=0; \ 1\leq i\not=j\leq n-1\;.
\]

Evaluating the preceeding equation at $x=e_i^++e_i^-$, $1\leq i\leq n-1$, gives:
\begin{align*}
0&=\sum_{j=1}^{n-1}\epsilon_j\tilde u_j^-(e_i^++e_i^-)\tilde u_j^+(e_i^++e_i^-)\\
&=\epsilon_i\tilde u_i^-(e_i^++e_i^-)\tilde u_i^+(e_i^++e_i^-)\\
&=\epsilon_i\tilde u_i^-(e_i^-)\tilde u_i^+(e_i^+)\\
&=\epsilon_i\,.
\end{align*}
We obtain that $\epsilon_i=0$ for all $1\leq i\leq n-1$ and the cocycle 
$U\in Z^1(\Gamma;\Ad\rho^{(n)}_D)$ is thus of the form:

\[
U =
\begin{pmatrix}
a_1 &  * & \dots & * \\
0 & a_2  & \ddots &\vdots \\
\vdots &  \ddots & \ddots &* \\
0 & \dots   & 0 &  a_n
\end{pmatrix}\in Z^1(\Gamma;C_0)
\,.
\]
By Remark~\ref{rem:trace-cohom} the traceless cohomolgy group $H^1_0(\Gamma; C_0/C_2)\cong0$, thus the cocycle $U\in\Ker dt$ is in fact a coboundary and $\Ker dt=
\operatorname{Im}\,\operatorname{ad}$.

Finally, the surjectivity of the morphism $dt$ is a consequence of the fact that 
\begin{align*}
\operatorname{rg}(dt)&=\dim T_{\rho^{(n)}_D}^{Zar}R_n(\Gamma)-\operatorname{rg}(\operatorname{ad})=n^2+n-2-(n^2-1)\\
&=n-1=\dim T_{\chi_D}^{Zar}X^{(n)}_D=
\dim T_{\chi_D}^{Zar}\overline{X_n^{irr}(\Gamma)}\,.\qedhere
\end{align*}
\end{proof}

\bibliographystyle{plain}
\bibliography{NNewVersion}

\end{document}